\pgfplotsset{compat=1.18}
\tikzset{
mydot/.style={
  fill,
  circle,
  inner sep=1pt
  }
}
\newcommand{\resizeboxlarger}[1]{
\resizebox{\ifdim\width>\textwidth\textwidth\else\width\fi}{!}{#1}}
\newcommand{\diff}{\mathop{}\!\mathrm{d}}
\newcommand{\mycomment}[1]{}
\newcommand{\opnorm}{\@ifstar\@opnorms\@opnorm}
\newcommand{\@opnorms}[1]{%
$\left|\mkern-1.5mu\left|\mkern-1.5mu\left|
   #1
  \right|\mkern-1.5mu\right|\mkern-1.5mu\right|
$}
\newcommand{\@opnorm}[2][]{%
  \mathopen{#1|\mkern-1.5mu#1|\mkern-1.5mu#1|}
  #2
  \mathclose{#1|\mkern-1.5mu#1|\mkern-1.5mu#1|}
}
\newtheorem{theorem}{Theorem}[section]
\newtheorem{lemma}{Lemma}[section]
\newtheorem*{remark}{Remark}
\begin{document}


\begin{center}
{\Large Optimal error estimates for a discontinuous Galerkin method on curved boundaries with polygonal meshes} \\[5ex]

{\large Ad\'erito Ara\'ujo and Milene Santos } \\
{\small
 Centre for Mathematics, University of Coimbra, 3000-143 Coimbra, Portugal\\
alma@mat.uc.pt, milene@mat.uc.pt \\
}
\end{center}

\begin{abstract}
We consider a discontinuous Galerkin method for the numerical solution of boundary value problems in two-dimensional domains with curved boundaries. A key challenge in this setting is the potential loss of convergence order due to approximating the physical domain by a polygonal mesh. Unless boundary conditions can be accurately transferred from the true boundary to the computational one, such geometric approximation errors generally lead to suboptimal convergence. 

To overcome this limitation, a higher-order strategy based on polynomial reconstruction of boundary data was introduced for classical finite element methods in \cite{RuaS2017Var,ruas2019OptimalLA} and in the finite volume context in \cite{BOULARAS2017401,ROD2018}. More recently, this approach was extended to discontinuous Galerkin methods in \cite{DGROD}, leading to the DG--ROD method, which restores optimal convergence rates on polygonal approximations of domains with curved boundaries. 

In this work, we provide a rigorous theoretical analysis of the DG--ROD method, establishing existence and uniqueness of the discrete solution and deriving error estimates for a two-dimensional linear advection-diffusion-reaction problem with homogeneous Dirichlet boundary conditions on both convex and non-convex domains. Following and extending techniques from classical finite element methods \cite{ruas2019OptimalLA}, we prove that, under suitable regularity assumptions on the exact solution, the DG--ROD method achieves optimal convergence despite polygonal approximations. Finally, we illustrate and confirm the theoretical results with a numerical benchmark considering triangular meshes.
\end{abstract}

\textbf{Keywords}: Arbitrary curved boundaries, Discontinuous Galerkin method, Reconstruction for off-site data method, Error estimate.

\section{Introduction}\label{intro}
In this work, we study an approach for solving boundary value problems posed in a curved boundary domain of arbitrary shape in the context of discontinuous Galerkin (DG) methods. The study of boundary value problems in curved boundary domains is a subject of growing interest in the numerical analysis community. One of the major problems is the reduction in the order of convergence of numerical methods when considering the approximation of the domain by a polygonal mesh. In particular, the DG solutions are highly sensitive to the accuracy of approximations of the curved boundaries \cite{bassi}. It has been shown that, given homogeneous Dirichlet boundary conditions on a curved boundary domain $\Omega$, if these conditions are imposed on the polygonal domain $\Omega_h$, any finite element method will be at most second-order accurate \cite{BergerPolygonal}. This highlights the importance of the boundary condition treatment since the errors in the boundary may pollute the solution inside the domain.

Over the past few decades, several techniques have been developed to remedy this loss of accuracy. There are two main strategies to address this issue. The isoparametric finite element method \cite{bassi} and the isogeometric analysis \cite{CAD_NURBS} aim to reduce the geometric error without modifying the variational form. Therefore, this technique requires the construction of a mesh with curved elements on the boundary, which is a challenging geometric problem where ineligible cells can be produced. Moreover, this approach also raises some numerical challenges since it considers non-constant Jacobian transformations from the reference element.

Another strategy considers a polygonal approximation domain $\Omega_h$ and focuses on a modified variational formulation. There has been a growing body of research focused on correcting the error that results from the approximation of the physical boundary $\partial \Omega$ by a polygonal boundary $\partial \Omega_h$, by modifying the boundary condition. In \cite{Krivodonova}, the authors consider a computational polygonal domain in place of the physical domain and modify the normal vector involved in the wall boundary condition. However, this method can only be formulated for slip-wall boundary conditions, and the work is limited to 2D geometries. In \cite{Zhang}, the author proposes a modified DG scheme defined on polygonal meshes that avoids integrals inside curved elements. However, integrations along boundary curve segments are still necessary. This approach was extended to solving three-dimensional Euler equations and it was simplified by considering the relation between the normal vector of the computational domain and the surface Jacobian \cite{Euler2020}. In the Shifted Boundary Method (SBM), the location where the boundary conditions are applied is shifted from the true boundary to an approximate (surrogate) boundary. The value of boundary conditions is modified by means of Taylor expansion, in order to reflect this displacement (see \cite{ATALLAH2022114885} and the references therein). In \cite{doi:10.1137/17M1154266}, the authors design and analyse a hybrid high-order (HHO) method on unfitted meshes to approximate elliptic interface problems. Sharp geometric assumptions are introduced for $C^2$ curved interfaces, establishing discrete inverse trace estimates and continuous trace inequalities for shape-regular elements cut by the interface. Within the framework of HHO methods, in \cite{10.1007/978-3-031-20432-6_15}, the authors introduced a formulation that combines Nitsche's technique to weakly impose boundary conditions on curved and complicated Lipschitz domains.  Another method proposed in the literature is the so-called Extensions from Subdomains \cite{Solano}. The main idea of this approach consists of determining a new Dirichlet boundary condition on the polygonal computational boundary from the one evaluated on the physical boundary.

In \cite{hpDG2021}, the domain is discretised with meshes comprising general, essentially arbitrarily-shaped element shapes. In this framework, the authors establish the stability and error analysis of the interior-penalty DG method through new extensions of trace inverse estimates for more general curved element shapes.
More recently, \cite{BERTOLUZZA2022115454} presents a virtual element discretisation framework for weakly imposing non-homogeneous Dirichlet boundary conditions on two- and three-dimensional domains with curved boundaries. In particular, the authors provide an analysis of the geometric approximation error when using polytopal domains to approximate curved domains in both 2D and 3D.

The development of the method considered in this work began in the finite element (FE) context with \cite{RuaS2017Var}, which introduced polygonal meshes for curved domains while preserving high-order accuracy. Subsequent works \cite{ruas2019OptimalLA, RUAS2021113523, RUAS2025} refined the approach and provided a rigorous theoretical analysis in the FE setting. Independently, a similar strategy was proposed for finite volume (FV) methods, first in \cite{BOULARAS2017401,ROD2018} and later extended in \cite{COSTA2022115064, COSTA2023116274, COSTA2019112560, COSTA2021110604}, where it became known as the ROD (Reconstruction for Off-site Data) method. The approach was also adapted to finite difference (FD) schemes on Cartesian grids \cite{CLAIN2021110217}. Building on these developments, we extended this technique for DG schemes in \cite{DGROD} considering triangular meshes, thereby introducing the so-called DG--ROD method.

Although numerical evidence demonstrates the effectiveness of the ROD approach in FV, FD, and DG settings, a rigorous theoretical analysis has so far been limited to FE. In this work, we extend the theoretical framework to DG by establishing error estimates for a two-dimensional linear advection–diffusion–reaction problem with homogeneous Dirichlet boundary conditions, in both the DG norm and the $L^2$ norm.

The DG--ROD method can be implemented either iteratively -- alternating between the DG solver and the polynomial reconstruction -- or via a global formulation solved in a single step. Here, we adopt the latter approach, providing an efficient and robust strategy for accurately handling curved boundaries in DG simulation

This document is organised as follows. Section \ref{DGROD_method} introduces the basic concepts, including mesh notations, the space of discontinuous functions, and the formulation of the problem under consideration. In Section \ref{existenceUniq}, we analyse fundamental properties of the method, establishing the boundedness of the bilinear form and proving an inf-sup condition. We also study the existence and uniqueness of the solution for the advection–diffusion–reaction problem with homogeneous Dirichlet boundary conditions.

The core of this work is presented in Section \ref{errorEstimates}, where we derive error estimates for the DG--ROD method introduced in Section \ref{DGROD_method}, considering both convex and non-convex domains. This analysis follows the theoretical framework developed for the classical finite element method \cite{ruas2019OptimalLA}. A key aspect of the analysis concerns the Galerkin orthogonality. The convex and non-convex cases lead to fundamentally different arguments, since in the non-convex setting, the Galerkin orthogonality no longer holds due to the geometric approximation of the domain. This loss constitutes the main additional technical challenge in the non-convex case. For convex domains, we prove that the DG--ROD solution achieves the optimal $\mathcal{O}(h^{N+1})$ convergence rate in the $L^2$ norm when $N$-degree piecewise polynomials are used, under appropriate regularity assumptions on the solution. 

Finally, Section \ref{NumResults} reports numerical experiments and results for triangular meshes, and Section \ref{conclusions} summarises the main findings, providing final comments and perspectives for future work.

\section{The DG Variational Formulation}\label{DGROD_method}
This section addresses a DG formulation for a two-dimensional linear boundary value problem on a curved boundary domain, which is discretised with piecewise linear elements. This method has the advantage of overcoming the difficulties inherent to curved mesh approaches by discretising the physical domain with polygonal meshes constructed from the conventional meshing algorithms, where piecewise linear elements approximate the arbitrary curved boundary. The main idea of the DG method is based on the use of discontinuous functions to obtain an approximate solution. Additionally, the DG--ROD method employs specific polynomial reconstructions for the prescribed boundary conditions on the physical boundary. From a methodological point of view, this approach constitutes a DG extension of the method proposed in \cite{RuaS2017Var,ruas2019OptimalLA}. Since the method studied in this paper is formulated within a similar variational setting as in \cite{ruas2019OptimalLA}, the description provided in this section should be understood as an adaptation of the corresponding methodology to the DG context. The first step is the definition of the mesh and the broken polynomial spaces. After providing an overview of some basic ideas related to computational meshes, we present the primal formulation of the method, which incorporates the modification derived from the polynomial reconstruction of the boundary conditions. 

\subsection{Model problem}
The methodology for dealing with curved boundary domains studied in this work can be applied to different equations. However, to avoid non-essential technical details, we consider the advection-diffusion-reaction equation in a two-dimensional physical domain $\Omega$ with arbitrary smooth curved physical boundary $\partial \Omega$,
considering the Cartesian coordinate system $\boldsymbol{x}=\left(x,y\right)$. We seek function $u = u(\boldsymbol{x})$, solution of the advection-diffusion-reaction problem
\begin{align}
-\Delta u\left({\boldsymbol x}\right) + \nabla \cdot (\boldsymbol{b}(\boldsymbol{x})u(\boldsymbol{x})) + c\left({\boldsymbol x}\right)u\left({\boldsymbol x}\right) &= f\left({\boldsymbol x}\right), \quad {\boldsymbol x} \in \Omega,\label{problem_E}\\
   u\left({\boldsymbol x}\right)&=0, \quad {\boldsymbol x} \in \partial \Omega,\label{problem_B}
\end{align} where $f \in L^2(\Omega)$,  $c \in L^{\infty}(\Omega)$ and $\boldsymbol{b} = \boldsymbol{b}(\boldsymbol{x}) = \left(b_1(\boldsymbol{x}), b_2(\boldsymbol{x})\right)^{\textrm{T}}$,  with $b_1, b_2 \in W^1_{\infty}(\Omega)$. Assume that
\begin{equation}\label{cx_bx_cond}
   c\left({\boldsymbol x}\right) + \frac{1}{2} \nabla \cdot \boldsymbol{b}(\boldsymbol{x}) \geq 0, \quad \mbox{for $\boldsymbol x \in \overline{\Omega}$.} 
\end{equation}
The Lebesgue space $L^2(\Omega)$ is defined as a space of mensurable functions $u: \Omega \to \mathbb{R}$ such that $||u ||^2_{L^2(\Omega)} < +\infty,$ equipped with norm $||u ||^2_{L^2(\Omega)}=(u,u)_{L^2(\Omega)}$ and inner product 
\[ (u,w)_{L^2(\Omega)} = \int_{\Omega} u(\boldsymbol{x})w(\boldsymbol{x}) \dif\boldsymbol{x}.
\]

For simplicity of presentation, we restrict our analysis to homogeneous Dirichlet boundary conditions. This assumption is not essential, since nonhomogeneous Dirichlet data can be treated by a standard lifting argument.

Throughout this work, we assume that $\partial\Omega$ is piecewise of class $C^2$, namely, each smooth boundary segment is of class $C^2$. Additional regularity assumptions on $\partial\Omega$ may be required for higher-order error estimates. In particular, the derivation of the $L^2$-error estimates in Theorems~\ref{error_L2} and~\ref{errorNonConvex_L2} requires stronger boundary regularity.


\subsection{Definition of the mesh}
The physical domain $\Omega$ is meshed with $K$ non-overlapping straight-sided triangles $T^k, k=1,\ldots,K$, leading to an approximate computational domain $\Omega_h$ given as
\begin{equation}\label{eqmalha}
\Omega_h= \bigcup_{k=1}^K T^k.
\end{equation}
The triangulation $\mathcal{T}_h = \left\{T^k, k=1,\ldots,K\right\}$ is assumed to be conformed, where the intersection of two elements is either a complete edge, a vertex, or the empty set. We assume that no element $T^k$ has more than one edge on $\partial \Omega_h $ and all the vertices of the polygon lie on $\partial \Omega$. The space parameter $h$ represents the maximum element diameter, namely
\[h= \max_{T^k \in \mathcal{T}_h } \{h_k\}, \qquad h_k= \sup_{P_1,P_2 \in T^k} \left\Vert P_1-P_2\right\Vert.
\]
The triangulation is also assumed to be regular~\cite{Ghalati2017} in the sense that there is a constant $\rho >0$ such that
\begin{equation}\label{shape_regularity}
    \forall T^k \in \mathcal{T}_h, \quad \frac{h_k}{\rho_k} \leq \rho,
\end{equation} where $\rho_k$ denotes the maximum radius of a ball inscribed in $T^k$. In addition to the mesh regularity requirements in \eqref{shape_regularity}, we assume that all boundary vertices lie on $\partial\Omega$ and that any corners (points where $\partial\Omega$ is not $C^2$) are resolved as mesh vertices. Furthermore, assuming that $h$ is sufficiently small to resolve the local curvature of $\partial\Omega$, the boundary $\partial\Omega_h$ acts as a piecewise linear interpolation of the $C^2$ boundary segments. This directly implies the geometric error estimate
\begin{equation}\label{distance_boundaries}
    \mathrm{dist}_H(\partial\Omega, \partial\Omega_h) \leq C_{\partial \Omega} h^2,
\end{equation}
where $\mathrm{dist}_H$ denotes the Hausdorff distance and $C_{\partial \Omega}$ depends on $\partial\Omega$.

Let $\mathcal{E}_h$ denote all edges of elements in $\mathcal{T}_h$ and $\mathcal{E}_0$ denote the subset of $\mathcal{E}_h$ consisting of the edges not contained in $\partial\Omega_h$. We assume that exists a positive constant $\mu$ such that for every element $T^k \in \mathcal{T}_h$ and $e \in \mathcal{E}_h \cap \partial T^k$, we have \cite{MU2014432}
\begin{equation}\label{meshIneq_hk_he}
    \mu h_k \leq h_e,
\end{equation} where $h_e$ denotes the length of the edge $e$. 
%

Given an element $T^k$, denote as $I^k$ the index set of the elements $T^\ell$ that share a common edge $e^{k\ell}$ and by $I^B$ the index set of elements which have an edge on the boundary, $e^{kB}$. Normal vector $\boldsymbol{n}^{k \ell}$, $\ell \in I^k$, pointing outward of element $T^k$ and $\boldsymbol{n}^{ \ell k} = - \boldsymbol{n}^{k \ell}$. In order to avoid non-essential technical details, assume that the mesh is designed such that convex and concave portions of $\partial \Omega$ correspond to convex and concave portions of  $\partial \Omega_h$ (see Figure \ref{BoundElem}). This property is guaranteed when the mesh is sufficiently fine, and the points separating such portions of $\partial \Omega$ are vertices of the polygon $\Omega_h$. For each element $T^k, k \in I^B$, denote as $\Delta_{k}$ the closed set delimited by $\partial \Omega$ and the edge $e^{kB}$ with the smallest area (see Figure \ref{BoundElem}). Consider $\mathcal{Q}^B$ a subset of $I^B$ such that $\mathcal{Q}^B$ denotes the index set of elements that have an edge on the boundary and $T^k \setminus \Omega$ is not restricted to a pair of vertices on $\partial \Omega$.

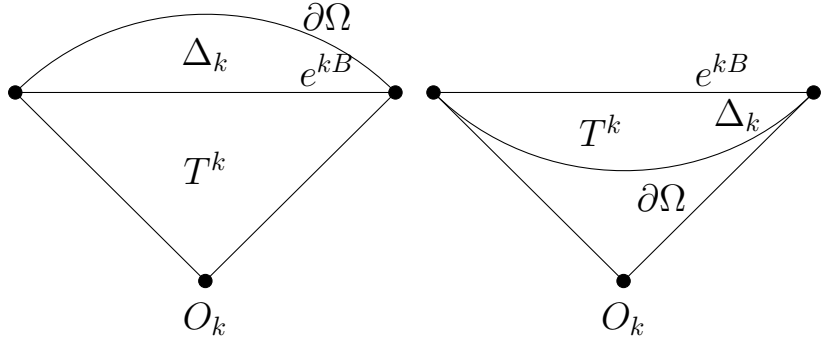
\begin{figure}
 \centering
 \begin{tikzpicture}[> = stealth,  shorten > = 1pt,   auto,   node distance = 1.7cm]
\coordinate (O) at (0,0);
\coordinate (A) at (2.5,2.5);
\coordinate (B) at (-2.5,2.5);
\coordinate (O1) at (5.5,0);
\coordinate (A1) at (8,2.5);
\coordinate (B1) at (3,2.5);

\draw[yshift=0cm]  (-2.5,2.5) to [bend left=45] (2.5,2.5);
\draw[yshift=0cm]  (3,2.5) to [bend right=45] (8,2.5);

\draw (O) -- (A) -- (B) -- cycle;
\draw (O1) -- (A1) -- (B1) -- cycle;

\node[circle,draw=black, fill=black, inner sep=0pt,minimum size=5pt] at (A) {};
\node[circle,draw=black, fill=black, inner sep=0pt,minimum size=5pt] at (B) {};
\node(teste)[circle,draw=black, fill=black, inner sep=0pt,minimum size=5pt] at (O) {};
\node[circle,draw=black, fill=black, inner sep=0pt,minimum size=5pt] at (A1) {};
\node[circle,draw=black, fill=black, inner sep=0pt,minimum size=5pt] at (B1) {};
\node(teste)[circle,draw=black, fill=black, inner sep=0pt,minimum size=5pt] at (O1) {};

\node at (0,1.5) {\large $T^k$}; 
\node at (1.6,2.8) {\large $e^{kB}$}; 
\node at (1.6,3.5) {\large $\partial \Omega$};
\node at (0,3) {\large $\Delta_k$}; 
\node at (0,-0.5) {\large $O_k$}; 

\node at (5.2,2) {\large $T^k$}; 
\node at (6.8,2.8) {\large $e^{kB}$}; 
\node at (6,1.1) {\large $\partial \Omega$};
\node at (7,2.2) {\large $\Delta_k$}; 
\node at (5.5,-0.5) {\large $O_k$};

\end{tikzpicture}
\caption{Element $T^k$ with an edge $e^{kB}$ on the computational boundary $\partial \Omega_h$, for the convex case where $T^k \subset \Omega$ (left panel) and for the concave case, where $T^k \not\subset \Omega$ (right panel).}
\label{BoundElem}
\end{figure}

\subsection{Space of discontinuous functions}
\label{subsec:spacediscfunc}
The discontinuous Galerkin method is based on the use of discontinuous approximations. Thus, we introduce the broken Sobolev spaces 
\begin{align*}
    H^m(\mathcal{T}_h) &= \{w \in L^2(\Omega_h): w_{\vert_{T^k}} \in H^m(T^k) \; \forall T^k \in \mathcal{T}_h\}, \\
    W^m_p(\mathcal{T}_h) &= \{w \in L^p(\Omega_h): w_{\vert_{T^k}} \in W^m_p(T^k) \; \forall T^k \in \mathcal{T}_h\},
\end{align*} where $m \geq 0$ is an integer and $1\leq p \leq \infty$ a real number.
Note that %
\[(w,v)_{L^2(\Omega_h)} = \sum_{k=1}^K (w,v)_{L^2(T^k)}, \; \forall w, v \in {L^2(\Omega_h)},\] where $(u,v)_{L^2(T^k)}$ denotes the usual inner product on $L^2(T^k)$. For $w \in H^{m}(\mathcal{T}_h)$,  with $m\geq 0$ an integer, we define the norm
\[\norm{w}^2_{H^{m}(\mathcal{T}_h)} = \sum_{k=1}^K \norm{w}^2_{H^{m}(T^k)},\] and the seminorm
\[\abs{w}^2_{H^{m}(\mathcal{T}_h)} = \sum_{k=1}^K \abs{w}^2_{H^{m}(T^k)},\]
where $\norm{w}_{H^{m}(T^k)}$ and $\abs{w}_{H^{m}(T^k)}$ denote the usual $H^{m}$-norm and $H^{m}$-seminorm on the element $T^k$, respectively. We also introduce the following function space
\begin{equation*}
    H(\mbox{div};\Omega) = \{ \boldsymbol{q} \in [L^2(\Omega)]^2: \nabla \cdot \boldsymbol{q} \in L^2(\Omega) \}.
\end{equation*} For all $T^k \in \mathcal{T}_h$, we define the function space $H(\mbox{div}; T^k)$ by replacing $\Omega$ by $T^k$ in  $H(\mbox{div}; \Omega)$. We then introduce the broken space
\begin{equation*}
    H(\mbox{div};\mathcal{T}_h) = \{ \boldsymbol{q} \in [L^2(\Omega)]^2: \forall T^k \in \mathcal{T}_h,  \boldsymbol{q}_{\vert_{T^k}} \in H(\mbox{div};T^k) \}.
\end{equation*}
We define the space of discontinuous  piecewise polynomial functions 
\[S_{hN} = \left\{ v \in L^2(\Omega_h): v_{\vert_{T^k}} \in \mathcal{P}_N\left(T^k\right) \forall T^k \in \mathcal{T}_h\right\},\]
with $\mathcal{P}_N\left(T^k\right)$ denoting the space of polynomials of degree less than or equal to $N$ in element $T^k$. We also introduce broken operators by restriction to each element $T^k \in \mathcal{T}_h$ as follows:
\begin{itemize}
    \item The broken gradient operator $\nabla_h: W^1_p(\mathcal{T}_h) \to [L^p(\Omega_h)]^2$ is defined by $(\nabla_hv)_{\vert_{T^k}} = \nabla(v_{\vert_{T^k}})$, for $T^k \in \mathcal{T}_h$, $v \in W^1_p(\mathcal{T}_h)$.
    \item The broken divergence operator $\nabla_h \cdot: H(\mbox{div}; \mathcal{T}_h) \to L^2(\Omega_h)$ is defined by $(\nabla_h \cdot \boldsymbol{q})_{\vert_{T^k}} = \nabla \cdot (\boldsymbol{q}_{\vert_{T^k}})$, for $T^k \in \mathcal{T}_h$, $\boldsymbol{q} \in H(\mbox{div}; \mathcal{T}_h)$.
\end{itemize}

Let $\Gamma = \cup_{T^k \in \mathcal{T}_h} \partial T^k$ and $\Gamma_0 = \Gamma \setminus \partial \Omega_h$, the traces of functions in $H^1(\mathcal{T}_h)$ belong to $T(\Gamma) = \Pi_{T^k \in \mathcal{T}_h} L^2(\partial T^k)$. Note that $v \in T(\Gamma)$ may be double-valued on $\Gamma_0$ and is single-valued on $\partial \Omega_h$.

We introduce some operators that will be useful for manipulating the numerical fluxes and obtaining the primal formulation. Let $e^{k \ell}$ be an edge shared by the elements $T^k$ and $T^\ell$. For $\boldsymbol{q} \in  [T(\Gamma)]^2 $ and  $v \in  T(\Gamma)$, we define the averages $\{\!\{\boldsymbol{q}\}\!\}^{ k\ell}$ and $\{\!\{ v\}\!\}^{ k\ell}$ and the jumps $\llbracket \boldsymbol{q}\rrbracket^{k\ell}$ and $\llbracket v\rrbracket^{k\ell}$ as follows:
\begin{align*}
&\{\!\{\boldsymbol{q}\}\!\}^{ k\ell} = \frac{\boldsymbol{q}^k + \boldsymbol{q}^\ell}{2},
\qquad \llbracket \boldsymbol{q}\rrbracket^{k\ell} = \boldsymbol{n}^{k\ell}\cdot (\boldsymbol{q}^{k} - \boldsymbol{q}^\ell),
 \\
&\{\!\{ v\}\!\}^{ k\ell} = \frac{ v^k + v^\ell}{2}, \qquad \llbracket v\rrbracket^{k\ell}= \boldsymbol{n}^{k\ell}(v^{k} -v^\ell).
\end{align*}
For a boundary edge $e^{kB}$, we define
\begin{align*}
&\{\!\{\boldsymbol{q}\}\!\}^{ kB} = \boldsymbol{q}^k,
\;\quad
\{\!\{ v\}\!\}^{ kB} = v^k,
\;\quad \llbracket \boldsymbol{q}\rrbracket^{kB} = \boldsymbol{n}^{kB}\cdot \boldsymbol{q}^{k}, \;\quad \llbracket v\rrbracket^{kB} = \boldsymbol{n}^{kB} v^{k}.
\end{align*} When it is clear which edge we are referring to, we usually omit the superscript $k\ell$ and simply write $\{\!\{ \cdot \}\!\}$ and $\llbracket \cdot \rrbracket$. A convenient norm with which to carry out the analysis of the method is the following \cite{ArnoldBrezziCockburnMarini, Ern_Pietro}
\begin{align}\label{DG_norm}
    \opnorm{u}^2 &=\sum_{k=1}^K \left(\norm{u}^2_{H^1(T^k)}+h_k^2 \abs{u}^2_{H^2(T^k)} \right)+\sum_{e \in \mathcal{E}_h} h_e^{-1} \norm{\llbracket u\rrbracket}^2_{L^2(e)} \nonumber\\
    &\quad+\sum_{e \in \mathcal{E}_h}\frac{1}{2} \norm{ \abs{\boldsymbol{b}\cdot \boldsymbol{n}_e}^{1/2}\llbracket u\rrbracket}^2_{L^2(e)}, 
\end{align} for $u \in H^2(\mathcal{T}_h)$. For notational convenience, let
\begin{equation}\label{semiNormDG}
    \abs{v}^2_{\ast} = \sum_{e \in \mathcal{E}_h} h_e^{-1} \norm{\llbracket v \rrbracket}^2_{L^2(e)} \quad \mbox{and} \quad \abs{v}^2_{b} = \sum_{e \in \mathcal{E}_h} \frac{1}{2} \norm{ \abs{\boldsymbol{b}\cdot \boldsymbol{n}_e}^{1/2}\llbracket v\rrbracket}^2_{L^2(e)},
\end{equation} for $v \in T(\Gamma)$. Using a inequality (\cite{Brenner}, Lemma $4.5.3$), we may prove that, for $v \in \mathcal{P}_N(T^k)$, with $T^k \in \mathcal{T}_h$,
\begin{equation}\label{Des_H2_H1_lemma4.5.3}
   h_k \abs{v}_{H^2(T^k)} \leq C \abs{v}_{H^1(T^k)}.
\end{equation} Thus, for $v \in S_{hN}$
\begin{align}\label{DG_norm_simpl}
   \opnorm{v}^2 &=\sum_{k=1}^K \left(\norm{v}^2_{L^2(T^k)} + \abs{v}^2_{H^1(T^k)} +  h_k^2 \abs{v}^2_{H^2(T^k)} \right) +\abs{v}^2_{\ast} + \abs{v}^2_{b}\nonumber\\
    &\leq (1+C^2) \left( \norm{v}^2_{H^1(\mathcal{T}_h)} +  \abs{v}^2_{\ast} + \abs{v}^2_{b}\right).
\end{align}

For each element $T^k$, with $k \in I^B$, let $I^{kB}$ be the index set of the discontinuous Galerkin nodes different from the vertices that belong to the boundary edge $e^{kB}$ (see left panel of Figure \ref{NodalSet}).

Now, we introduce two spaces $\mathcal{V}_h$ and $\mathcal{W}_h$ associated to $\mathcal{T}_h$. The space $\mathcal{V}_h$ is defined by
\[ \mathbin{\mathcal{V}_h = \left\{v \in L^2(\Omega_h):  v\vert_{\partial \Omega_h}=0, \;  v_{\vert_{T^{k}}} \in \mathcal{P}_N\left(T^k\right),\forall T^k \in \mathcal{T}_h \right\}}.
\] For convenience, we extend by 0 every function $v \in \mathcal{V}_h$ to $\Omega \setminus \Omega_h$.
 $\mathcal{W}_h$ is the space defined in $\Omega_h$ that satisfies the following properties for $w \in \mathcal{W}_h$:
\begin{enumerate}[label=\textnormal{(\arabic*)}]
    \item $ w_{\vert_{T^{k}}} \in \mathcal{P}_N\left(T^k\right), \forall T^k \in \mathcal{T}_h$;
    \item $w \in L^2(\Omega_h)$;
    \item The expression of $w$ is extended to $\Omega \setminus \Omega_h$ in such a way that its polynomial expression in $T^k, k \in I^B$, also applies in $\Delta_{k}$;
    \item $w$ vanishes at the vertices of $\partial \Omega_h$ and $w(P^k_r) = 0$, $r= 1,\ldots, N-1$, $\forall T^k \in \mathcal{T}_h, k \in I^B$ (where each point $P^k_r$ is chosen to be the nearest intersection with the physical boundary $\partial \Omega$ of the line passing through the vertex $O_k$ of $T^k$ not belonging to $\partial \Omega$ and one of $N-1$ discontinuous Galerkin nodes $\boldsymbol{x}_i^k, i \in I^{kB}$, lying on the associated boundary edge, $e^{kB}$). Thus, $w$ vanishes at $N+1$ points on $\partial \Omega$. \label{item4Def_Wh}
\end{enumerate}

For notational purposes, assume that the vertices of the element $T^k$, $k\in I^B$, on $\partial \Omega$ are denoted by $P^k_N$ and $P^k_{N+1}$. Thus, according to property \ref{item4Def_Wh}, we may write $w(P^k_r) = 0$, $r= 1,\ldots, N+1$, $\forall T^k \in \mathcal{T}_h, k \in I^B$.  An example of the nodes associated with $\mathcal{W}_h$ is reported in Figure \ref{NodalSet}. Namely, the discontinuous Galerkin nodal set and the points $P_r^k$, $r=1, \ldots, N-1$, resulting from a projection of the nodal points lying on the boundary edge $e^{kB}$. For the non-convex case, the points $P_r^k$ are obtained using the same approach.

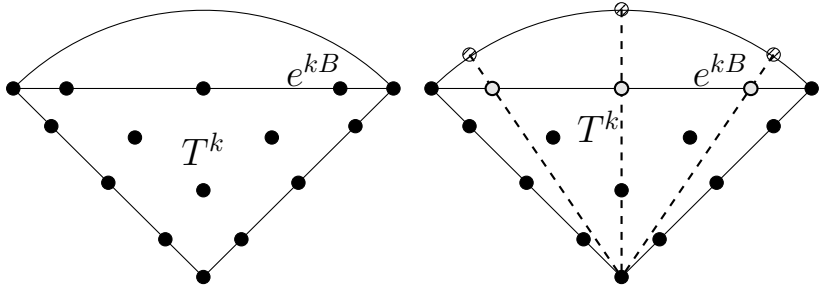
\begin{figure}
\centering
\begin{tikzpicture}[> = stealth,  shorten > = 1pt,   auto,   node distance = 1.7cm]
\coordinate (O) at (0,0);
\coordinate (A) at (2.5,2.5);
\coordinate (B) at (-2.5,2.5);

\coordinate (C1) at (1.8,2.5);
\coordinate (C2) at (-1.8,2.5);
\coordinate (C3) at (0,2.5);

\coordinate (D1) at (0.5,0.5);
\coordinate (D2) at (-0.5,0.5);
\coordinate (D3) at (1.25,1.25);
\coordinate (D4) at (-1.25,1.25);
\coordinate (D5) at (2,2);
\coordinate (D6) at (-2,2);

\coordinate (E1) at (0.9,1.85);
\coordinate (E2) at (-0.9,1.85);
\coordinate (E3) at (0,1.15);

\coordinate (O1) at (5.5,0);
\coordinate (A1) at (8,2.5);
\coordinate (B1) at (3,2.5);

\coordinate (C4) at (7.2,2.5);
\coordinate (C5) at (3.8,2.5);
\coordinate (C6) at (5.5,2.5);

\coordinate (M1) at (7.5,2.95);
\coordinate (M2) at (3.5,2.95);
\coordinate (M3) at (5.5,3.55);

\coordinate (D7) at (6,0.5);
\coordinate (D8) at (5,0.5);
\coordinate (D9) at (6.75,1.25);
\coordinate (D10) at (4.25,1.25);
\coordinate (D11) at (7.5,2);
\coordinate (D12) at (3.5,2);

\coordinate (E4) at (6.4,1.85);
\coordinate (E5) at (4.6,1.85);
\coordinate (E6) at (5.5,1.15);

\draw[yshift=0cm]  (-2.5,2.5) to [bend left=45] (2.5,2.5);
\draw[yshift=0cm]  (3,2.5) to [bend left=45] (8,2.5);

\draw (O) -- (A) -- (B) -- cycle;
\draw (O1) -- (A1) -- (B1) -- cycle;

\draw[thick,black,dashed] (M1) -- (O1);
\draw[thick,black,dashed] (M2) -- (O1);
\draw[thick,black,dashed] (M3) -- (O1);

\node[circle,draw=black, fill=black, inner sep=0pt,minimum size=5pt] at (A) {};
\node[circle,draw=black, fill=black, inner sep=0pt,minimum size=5pt] at (B) {};
\node(teste)[circle,draw=black, fill=black, inner sep=0pt,minimum size=5pt] at (O) {};
\node[circle,draw=black, fill=black, inner sep=0pt,minimum size=5pt] at (A1) {};
\node[circle,draw=black, fill=black, inner sep=0pt,minimum size=5pt] at (B1) {};
\node(teste)[circle,draw=black, fill=black, inner sep=0pt,minimum size=5pt] at (O1) {};

\node at (0,1.7) {\large $T^k$}; 
\node at (1.45,2.75) {\large $e^{kB}$}; 

\node at (5.2,2) {\large $T^k$}; 
\node at (6.8,2.75) {\large $e^{kB}$};

\node[circle,draw=black, fill=black, inner sep=0pt,minimum size=5pt] at (C1) {};
\node[circle,draw=black, fill=black, inner sep=0pt,minimum size=5pt] at (C2) {};
\node[circle,draw=black, fill=black, inner sep=0pt,minimum size=5pt] at (C3) {};
\node[circle,draw=black, fill=black, inner sep=0pt,minimum size=5pt] at (D1) {};
\node[circle,draw=black, fill=black, inner sep=0pt,minimum size=5pt] at (D2) {};
\node[circle,draw=black, fill=black, inner sep=0pt,minimum size=5pt] at (D3) {};
\node[circle,draw=black, fill=black, inner sep=0pt,minimum size=5pt] at (D4) {};
\node[circle,draw=black, fill=black, inner sep=0pt,minimum size=5pt] at (D5) {};
\node[circle,draw=black, fill=black, inner sep=0pt,minimum size=5pt] at (D6) {};
\node[circle,draw=black, fill=black, inner sep=0pt,minimum size=5pt] at (E1) {};
\node[circle,draw=black, fill=black, inner sep=0pt,minimum size=5pt] at (E2) {};
\node[circle,draw=black, fill=black, inner sep=0pt,minimum size=5pt] at (E3) {};

\node[circle,draw=black, fill=black, inner sep=0pt,minimum size=5pt, pattern=north east lines] at (M1) {};
\node[circle,draw=black, fill=black, inner sep=0pt,minimum size=5pt, pattern=north east lines] at (M2) {};
\node[circle,draw=black, fill=black, inner sep=0pt,minimum size=5pt, pattern=north east lines] at (M3) {};

\node[minimum size=5pt, draw,circle,font=\sffamily\Large\bfseries,inner sep=0pt, fill=black!10, thick] at (C4) {};
\node[minimum size=5pt, draw,circle,font=\sffamily\Large\bfseries,inner sep=0pt, fill=black!10, thick] at (C5) {};
\node[minimum size=5pt, draw,circle,font=\sffamily\Large\bfseries,inner sep=0pt, fill=black!10, thick] at (C6) {};

\node[circle,draw=black, fill=black, inner sep=0pt,minimum size=5pt] at (D7) {};
\node[circle,draw=black, fill=black, inner sep=0pt,minimum size=5pt] at (D8) {};
\node[circle,draw=black, fill=black, inner sep=0pt,minimum size=5pt] at (D9) {};
\node[circle,draw=black, fill=black, inner sep=0pt,minimum size=5pt] at (D10) {};
\node[circle,draw=black, fill=black, inner sep=0pt,minimum size=5pt] at (D11) {};
\node[circle,draw=black, fill=black, inner sep=0pt,minimum size=5pt] at (D12) {};
\node[circle,draw=black, fill=black, inner sep=0pt,minimum size=5pt] at (E4) {};
\node[circle,draw=black, fill=black, inner sep=0pt,minimum size=5pt] at (E5) {};
\node[circle,draw=black, fill=black, inner sep=0pt,minimum size=5pt] at (E6) {};

\end{tikzpicture}

\caption{Discontinuous Galerkin nodal set $\{\boldsymbol{x}_i^k\}_{i=1}^{N_p}$ denoted by the black dots (left panel) and points $P^k_r$, $r=1,\ldots. N-1$, denoted by the dots with diagonal lines pattern (right panel).}
\label{NodalSet}
\end{figure}

For each element $T^k$, $k \in I^B$, let $m_N = N(N+1)/2$ be the number of nodal points that do not lie on the edge $e^{kB}$. In other words, $m_N = N_p - (N+1)$, with $N_p=\left(N+1\right)\left(N+2\right)/2$. The following lemma, proved in \cite{ruas2019OptimalLA} establishes that $\mathcal{W}_h$ is a non-empty finite-dimensional space.  

\begin{lemma}
Let  $\mathcal{P}_N\left(T^k\right)$ be the space of polynomials defined in $T^k$, $k \in I^B$, of degree less than or equal to $N$. Provided $h$ small enough  $\forall T^k$, $k \in I^B$, given a set of $m_N$ real values $\gamma_i^k$,  $i=1, \ldots, m_N$, there exists a unique function $w \in \mathcal{P}_N\left(T^k\right)$ that vanishes at both vertex of $T^k$ located on $\partial \Omega$ and at the points $P^k_r$ of $\partial \Omega$, $r=1, \ldots N-1$, and takes value $\gamma_i^k$ respectively at the $m_N$ nodes of $T^k$ not located on $\partial \Omega_h$.
\end{lemma}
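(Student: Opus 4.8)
The plan is to recast the three families of interpolation requirements defining $w$ as a single square linear system and to show that system is nonsingular once $h$ is small. First I would count conditions: $w$ must vanish at the two vertices of $T^k$ on $\partial\Omega$, vanish at the $N-1$ points $P^k_r$ on $\partial\Omega$, and attain the prescribed values $\gamma_i^k$ at the $m_N$ remaining nodes. This totals $2+(N-1)+m_N = (N+1)+N(N+1)/2 = (N+1)(N+2)/2 = N_p = \dim\mathcal{P}_N(T^k)$ conditions. Writing $w=\sum_j c_j p_j$ in any fixed basis $\{p_j\}$ of $\mathcal{P}_N(T^k)$ therefore produces a square system $A^k\mathbf{c}=\mathbf{v}$, where the matrix $A^k$ collects the values of the basis polynomials at the $N_p$ interpolation points. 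Existence and uniqueness of $w$ for every admissible right-hand side is then equivalent to $\det A^k\neq 0$, so the entire statement reduces to a nonsingularity (unisolvence) claim.

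To control the determinant uniformly in $h$, I would pull everything back to a fixed reference triangle $\hat T$ through the affine map $F_k:\hat T\to T^k$; the shape-regularity assumption \eqref{shape_regularity} guarantees that $F_k$ and $F_k^{-1}$ have uniformly bounded distortion over $k\in I^B$. Under $F_k$ the two vertices and the $m_N$ interior/other-edge nodes are images of a fixed reference nodal configuration, while each $P^k_r$ maps to $\hat P^k_r=F_k^{-1}(P^k_r)$. The resulting reference interpolation matrix $\hat A^k$ has entries that are polynomial, hence continuous, functions of the node coordinates. At the \emph{unperturbed} configuration, in which each $P^k_r$ is replaced by the edge node $\boldsymbol{x}_i^k$ on $e^{kB}$, the node set is precisely the standard discontinuous Galerkin nodal set, which is unisolvent for $\mathcal{P}_N(T^k)$ by construction; denote the corresponding nonzero reference determinant by $D_0\neq 0$. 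Since $F_k$ is a bijection, unisolvence on $\hat T$ is equivalent to unisolvence on $T^k$, so it suffices to show $\det\hat A^k\neq 0$.

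The crux is a perturbation estimate: I would show that the perturbed reference nodes converge to the unperturbed ones, $|\hat P^k_r-\hat{\boldsymbol{x}}_i|\to 0$ as $h\to 0$, uniformly over $k\in I^B$. Here I would use the smoothness of $\partial\Omega$: each $P^k_r$ is the nearest intersection with $\partial\Omega$ of the line through $O_k$ and $\boldsymbol{x}_i^k$, and the curve deviates from its chord $e^{kB}$ by a sagitta of order $h_k^2$; since that line is transversal to $e^{kB}$ with incidence angle bounded below by shape regularity, one obtains $|P^k_r-\boldsymbol{x}_i^k|=O(h_k^2)$, and dividing by $h_k\sim h$ gives $|\hat P^k_r-\hat{\boldsymbol{x}}_i|=O(h)\to 0$. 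This is also the step at which the hypothesis ``$h$ small enough'' is used, to guarantee that the transversal line meets $\partial\Omega$ in a unique nearby point so that $P^k_r$ is well defined. By continuity of the determinant in the node coordinates we then get $\det\hat A^k\to D_0\neq 0$, whence $\det\hat A^k\neq 0$ and $\det A^k\neq 0$ for $h$ sufficiently small, giving the claimed existence and uniqueness. I expect the geometric perturbation estimate — quantifying $|P^k_r-\boldsymbol{x}_i^k|$ relative to $h_k$ and securing uniformity over all boundary elements via shape regularity — to be the main obstacle, while the algebraic part is the routine square-system/continuity-of-determinant argument already carried out for the finite element method in \cite{ruas2019OptimalLA}.
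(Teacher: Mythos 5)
The paper does not prove this lemma itself --- it is imported verbatim from \cite{ruas2019OptimalLA} --- but your argument is correct and is essentially the proof given there: one checks that the $N_p$ interpolation conditions form a square system and establishes nonsingularity for small $h$ by viewing the conditions at the points $P^k_r$ as a perturbation of the conditions at the standard, unisolvent Lagrange nodes, the key quantitative input being the $O(h_k^2)$ distance between $P^k_r$ and the corresponding node on $e^{kB}$ (which, after the $h_k$-scaling to the reference element, yields an $O(h)$ perturbation of a configuration with nonvanishing determinant); this displacement bound is precisely Proposition~2.1 of \cite{ruas2019OptimalLA}. Your packaging via continuity of the reference-element determinant, with uniformity over $k\in I^B$ secured by shape regularity, is only a cosmetic variant of the argument in the reference.
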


\subsection{Variational formulation}

Consider that the numerical solution $u_h$ has the following decomposition \cite{metodoDG}
\begin{equation}\label{approx}
u_h\left(\boldsymbol{x}\right) = \bigoplus_{k=1}^K u_h^k\left(\boldsymbol{x}\right) \in \mathcal{W}_h.
\end{equation} In each element $T^k$, the local solution $u_h^k$ has a polynomial decomposition with the two-dimensional Lagrange polynomials
\begin{equation}\label{u_hkLocal1}
\boldsymbol{x} \in T^k \in \mathcal{T}_h: \quad u_h^k\left(\boldsymbol{x}\right) = \sum_{i=1}^{N_p} u^k_i \ell_i^k\left(\boldsymbol{x}\right),
\end{equation} where $u^k_i=u_h^k\left(\boldsymbol{x}_i^k\right)$ are the nodal values of the Lagrange polynomials basis $\ell_i^k\left(\boldsymbol{x}\right)$ at points $\boldsymbol{x}_i^k\in T^k$, $i=1,\ldots,N_p$. Vector $\boldsymbol{u}^k = \left(u^k_1,\ldots,u^k_{N_p}\right)^{\textrm{T}}$ gathers the $N_p$ nodal values. 

The DG method considered herein combines the Symmetric Interior Penalty (SIP) bilinear scheme to handle the diffusion term and the upwind DG method to handle the advection-reaction terms. In other words, the elliptic numerical flux is defined by considering the internal penalty fluxes and the hyperbolic numerical flux is determined by evaluating the upwind flux. Thus, we may write the DG scheme as: find $u_h \in \mathcal{W}_h$  such that (\cite{Arnold_intPen}, \cite{Ern_Pietro}, \cite{hp-version})
\begin{align}\label{weakForm_2_upwind}
     &(\nabla_h u_h,\!\nabla_h \phi_h)_{L^2(\Omega_h)}\!-\!(\boldsymbol{b} u_h,\!\nabla_h \phi_h)_{L^2(\Omega_h)}\!+\!(c u_h,\!\phi_h)_{L^2(\Omega_h)}\nonumber \\
     &-\int_{\Gamma}\!\llbracket u_h\rrbracket \cdot\{\!\{\nabla_h \phi_h \}\!\} \dif s-\int_{\Gamma}\llbracket \phi_h\!\rrbracket \!\cdot\!\{\!\{\nabla_h u_h\}\!\}\dif s+\int_{\Gamma}\!\llbracket \phi_h\!\rrbracket\cdot\frac{\eta}{h_e} \llbracket u_h\rrbracket \dif s \nonumber \\
     &+\int_{\Gamma_0}\boldsymbol{b}\cdot \llbracket \phi_h\rrbracket \{\!\{u_h\}\!\}\dif s+\int_{\Gamma_0}\frac{\abs{\boldsymbol{b}\cdot \boldsymbol{n}_e}}{2}\llbracket \phi_h\rrbracket \cdot\llbracket u_h\rrbracket \dif s =( f, \phi_h)_{L^2(\Omega_h)}\nonumber \\
     &-\sum_{k \in I^B} \int_{e^{kB}} \left(g_D^k \boldsymbol{n}^{kB} \cdot \nabla_h \phi_h^k\!-\!\frac{\eta}{h_e} g_{D}^k \phi^k_h\!+\!\boldsymbol{b}\cdot \boldsymbol{n}^{kB} \phi^k_h g_{D}^k \right)\dif s, \forall \phi \in \mathcal{V}_h,
\end{align} where {\begin{equation}\label{penaltyConst}
    \eta = C_{\eta} (N+1)^2,
\end{equation} with $C_{\eta}$ a positive constant \cite{metodoDG}, and $\boldsymbol{n}_e$ denote the normal vector associated with the edge $e \in \Gamma_0$, i.e., the normal vector pointing outward of element $T^k$. Moreover, $g_{D}$ defines the boundary condition imposed in $\partial \Omega_h$.

Since the physical domain $\Omega$ features a curved boundary $\partial \Omega$ where the data is prescribed, the boundary values required by the DG formulation on the computational boundary $\partial \Omega_h$ are not directly accessible. To address this issue, we employ a reconstruction procedure based on the ROD method. This approach, detailed by \cite{DGROD}, enables the computation of the boundary data $g_D$ on $\partial \Omega_h$ using the information provided on the physical boundary $\partial \Omega$.
Thus, instead of imposing homogeneous Dirichlet boundary conditions on $\partial \Omega_h$, i.e. $g_D = 0$, we consider a new boundary condition $g_{ROD}$ determined by the ROD method. The polynomial $g_{ROD}$ takes into account the geometrical mismatch between $\partial \Omega$ and $\partial \Omega_h$ and it has the following decomposition
\[g_{ROD}(\boldsymbol{x}; \boldsymbol{a}) = \bigoplus_{k \in I^B} g^k\left(\boldsymbol{x}; \boldsymbol{a}^k\right),\] and in each element $T^k$, with $k \in I^B$, the local polynomial $g^k$ has a polynomial decomposition with the two-dimensional Lagrange polynomials
\begin{equation}\label{u_hLocal1}
\boldsymbol{x} \in T^k \in \mathcal{T}_h: \quad g^k\left(\boldsymbol{x}; \boldsymbol{a}^k\right) = \sum_{i=1}^{N_p} a_i^k \ell_i^k\left(\boldsymbol{x}\right),
\end{equation} where vector $\boldsymbol{a}^k = \left(a_1^k,\ldots,a_{N_p}^k\right)^\textrm{T}$ gathers the $N_p$ nodal values, $a_i^k$, and $\boldsymbol{a}$ gathers all the vectors $\boldsymbol{a}^k$, $k \in   I^B$. For each boundary element $T^k$, with $k \in I^B$, the polynomial $g^k$ is obtained as the solution of a constrained minimisation problem. This polynomial minimises the distance, in a least-squares sense, from the local solution $u^k_{h}$ while fulfilling the boundary conditions at a set of $N+1$ points on $\partial \Omega$. Thus, for each element $T^k$, $k \in I^{B}$, consider a generic polynomial 
\[
\pi\left(\boldsymbol{x};\boldsymbol{a}^k\right)=\sum_{i=1}^{N_p} a_i^k\ell_i^k\left(\boldsymbol{x}\right).
\] Moreover, consider the functional
\begin{equation}\label{def_functional_ROD}
\boldsymbol{a}^k\to \mathcal{E}^k\left(\boldsymbol{a}^k;\boldsymbol{u}^k\right) = \frac{1}{2} \sum\limits_{i=1}^{N_p} \left(a_i^k-u^k_i\right)^2,
\end{equation}
subjected to the linear constraints
\begin{equation}\label{def_constraints_ROD}
\mathcal{B}^k_r\left(\boldsymbol{a}^k\right)=
\pi\left(P_r^k;\boldsymbol{a}^k\right)=0,\quad r=1,\ldots,N+1.
\end{equation}
The polynomial $g^k$ is sought as a minimisation of functional~\eqref{def_functional_ROD} under linear constraints~\eqref{def_constraints_ROD}. The resulting local constrained minimisation problem is solved locally via the Lagrange multipliers method. From \cite{DGROD}, the minimisation problem is stated under the matrix form
%
\begin{equation}{\label{formaMatricial}}
\begin{bmatrix}
\boldsymbol{I}_{N_p} & \boldsymbol{B}^k\\
\left(\boldsymbol{B}^k\right)^\textrm{T} & \boldsymbol{0}_{N+1}
\end{bmatrix}
\begin{bmatrix}
\boldsymbol{a}^k \\
\boldsymbol{\lambda}^k
\end{bmatrix}=
\begin{bmatrix}
\boldsymbol{u}^k \\
\boldsymbol{0}
\end{bmatrix},
\end{equation} where $\boldsymbol{I}_{N_p}$ is the identity matrix in $\mathbb{R}^{N_p \times N_p}$, $\boldsymbol{0}_{N+1}$ is the null matrix in $\mathbb{R}^{(N+1) \times (N+1)}$, $\boldsymbol{0}$ is the null vector in $\mathbb{R}^{(N+1) \times 1}$, and $\boldsymbol{B}^k=\left[\boldsymbol{B}^k_1,\ldots,\boldsymbol{B}^k_{N+1}\right]$ in $\mathbb{R}^{N_p\times (N+1)}$, with
\[
\boldsymbol{B}^k_r=\Big[  \ell_j^k\left(P_r^k\right) \Big]_{j=1}^{N_p},
\]
for $r= 1,\ldots, N+1$, including the vertices of $T^k$ on $\partial \Omega_h$. Thus, 
\begin{equation*}
\begin{bmatrix}
\boldsymbol{a}^k \\
\boldsymbol{\lambda}^k
\end{bmatrix}=
\begin{bmatrix}
\boldsymbol{I}_{N_p} & \boldsymbol{B}^k\\
\left(\boldsymbol{B}^k\right)^\textrm{T} & \boldsymbol{0}_{N+1}
\end{bmatrix}^{-1}
\begin{bmatrix}
\boldsymbol{u}^k \\
\boldsymbol{0}
\end{bmatrix} =
\begin{bmatrix}
C_1 & C_2\\
C_3 & C_4
\end{bmatrix}
\begin{bmatrix}
\boldsymbol{u}^k \\
\boldsymbol{0}
\end{bmatrix},
\end{equation*} where $C_i$ denotes the $i-th$ block of the inverse matrix. Noticing that $\boldsymbol{I}_{N_p}$ is invertible, the inverse of the ${N_p \times N_p}$ matrix $C_1$ is given by \cite{bernstein2009matrix}
\begin{align*}
C_1 &= \boldsymbol{I}_{N_p}^{-1} + \boldsymbol{I}_{N_p}^{-1}\boldsymbol{B}^k \left(\boldsymbol{0}_{N+1} - \left(\boldsymbol{B}^k\right)^\textrm{T} \boldsymbol{I}_{N_p}^{-1} \boldsymbol{B}^k \right)^{-1} \left(\boldsymbol{B}^k\right)^\textrm{T}\boldsymbol{I}_{N_p}^{-1} \\
&=\boldsymbol{I}_{N_p} - \boldsymbol{B}^k \left(\left(\boldsymbol{B}^k\right)^\textrm{T} \boldsymbol{B}^k \right)^{-1} \left(\boldsymbol{B}^k\right)^\textrm{T}.
\end{align*} Thus, we get
\begin{align*}
\boldsymbol{a}^k &= \boldsymbol{u}^k -  \boldsymbol{B}^k \left(\left(\boldsymbol{B}^k\right)^\textrm{T} \boldsymbol{B}^k \right)^{-1} \left(\boldsymbol{B}^k\right)^\textrm{T} \boldsymbol{u}^k.
\end{align*}

Note that
\begin{align*}
    \left(\boldsymbol{B}^k\right)^\textrm{T} \boldsymbol{u}^k &=\begin{bmatrix}
\ell_1^k\left(P_1^k\right) & \ldots & \ell_{N_p}^k\left(P_1^k\right)\\
\ell_1^k\left(P_2^k\right) & \ldots & \ell_{N_p}^k\left(P_2^k\right)\\
\vdots & \ddots & \vdots \\
\ell_1^k\left(P_{N+1}^k\right) & \ldots & \ell_{N_p}^k\left(P_{N+1}^k\right)\\
\end{bmatrix} \begin{bmatrix}
    u_1^k \\
    u_2^k \\
    \vdots \\
    u_{Np}^k \\
\end{bmatrix}= \begin{bmatrix}
 u_h^k\left(P_1^k\right) \\
 u_h^k \left(P_2^k\right)\\
\vdots \\
 u_h^k \left(P_{N+1}^k\right)\\
\end{bmatrix} = \boldsymbol{0}, 
\end{align*} since $u_h^k \in \mathcal{W}_h$. Then $\boldsymbol{a}^k = \boldsymbol{u}^k$ and $g^k = u_h^k$, with $k \in I^B$. Replacing $g_D^k$ by $u_h^k$ in \eqref{weakForm_2_upwind}, we get for $v \in \mathcal{V}_h$ 
\begin{align}\label{weakForm_3}
     &(\nabla_h u_h,\nabla_h v)_{L^2(\Omega_h)}-(\boldsymbol{b} u_h,\nabla_h v)_{L^2(\Omega_h)}+(c u_h,v)_{L^2(\Omega_h)}\nonumber \\
     &-\int_{\Gamma}\!\llbracket u_h\rrbracket \cdot\{\!\{\nabla_h v \}\!\}\!\dif s-\int_{\Gamma}\llbracket v\rrbracket \cdot\{\!\{\nabla_h u_h\}\!\}\dif s+\int_{\Gamma}\llbracket v\rrbracket\cdot\frac{\eta}{h_e} \llbracket u_h\rrbracket \dif s \nonumber \\
     &+\int_{\Gamma_0}\boldsymbol{b}\cdot \llbracket v\rrbracket \{\!\{u_h\}\!\}\dif s+\int_{\Gamma_0}\frac{\abs{\boldsymbol{b}\cdot \boldsymbol{n}_e}}{2}\llbracket v\rrbracket \cdot\llbracket u_h\rrbracket \dif s=( f, v)_{L^2(\Omega_h)} \nonumber \\
     &-\sum_{k \in I^B} \int_{e^{kB}} \left(\!u_h^k \boldsymbol{n}^{kB} \cdot \nabla_h v_{\vert_{T^k}}
     -\frac{\eta}{h_e} u_h^k v_{\vert_{T^k}} + \boldsymbol{b}\cdot \boldsymbol{n}^{kB} v_{\vert_{T^k}} u_{h}^k \right)\!\dif s \nonumber \\
    &=\!( f, v)_{L^2(\Omega_h)}\!-\!\int_{\partial \Omega_h} \left(\!\llbracket u_h \rrbracket\!\cdot\!\{\!\{\nabla_h v \}\!\}\!-\!\frac{\eta}{h_e} \llbracket u_h\rrbracket \llbracket v \rrbracket\!+\!\boldsymbol{b}\cdot\llbracket v\rrbracket \{\!\{u_h\}\!\}\!\right)\!\dif s.
\end{align}

Attending that $v=0$ on $\partial \Omega_h$ and considering \eqref{weakForm_3}, we obtain
\begin{align}\label{weakForm_4}
  &(\nabla_h u_h, \nabla_h v)_{L^2(\Omega_h)} - (\boldsymbol{b} u_h,\!\nabla_h v)_{L^2(\Omega_h)} + (c u_h, v)_{L^2(\Omega_h)} \nonumber \\
  & -  \int_{\Gamma_0}  \llbracket u_h\rrbracket \cdot  \{\!\{ \nabla_h v\}\!\}\dif s- \int_{\Gamma_0} \llbracket v\rrbracket \cdot  \{\!\{ \nabla_h u_h\}\!\} \dif s + \int_{\Gamma} \frac{\eta}{h_e}  \llbracket v\rrbracket \cdot  \llbracket u_h\rrbracket \dif s  \nonumber \\
  &+ \int_{\Gamma_0} \boldsymbol{b}\cdot \llbracket v \rrbracket \{\!\{u_h\}\!\}\dif s+ \int_{\Gamma} \frac{\abs{\boldsymbol{b}\cdot \boldsymbol{n}_e}}{2}\llbracket v \rrbracket \cdot\llbracket u_h\rrbracket \dif s = ( f, v)_{L^2(\Omega_h)}.
\end{align}

Then, the variational problem of \eqref{problem_E}--\eqref{problem_B} can be reformulated as follows: find $u_h \in \mathcal{W}_h$ such that
\begin{equation}\label{formfraca_brokenGrad}
    B^{DG}_h(u_h,v)= (f, v)_{L^2(\Omega_h)}, \quad \forall v \in \mathcal{V}_h, 
\end{equation}
where the bilinear form $B^{DG}_h(\cdot,\cdot)$ is defined as 
\begin{align}
    B^{DG}_h(u_h,v) &= (\nabla_h u_h, \nabla_h v)_{L^2(\Omega_h)} - (\boldsymbol{b} u_h,  \nabla_h v)_{L^2(\Omega_h)} + (c u_h, v)_{L^2(\Omega_h)}\nonumber\\
    &-\!\int_{\Gamma_0}\!\llbracket u_h\rrbracket\!\cdot\!\{\!\{ \nabla_h v\}\!\}\dif s\!-\!\int_{\Gamma_0}\!\llbracket v\rrbracket\!\cdot\!\{\!\{ \nabla_h u_h\}\!\}\dif s\!+\!\int_{\Gamma}\!\frac{\eta}{h_e}  \llbracket v\rrbracket\!\cdot\!\llbracket u_h\rrbracket \dif s \nonumber \\
    &+ \int_{\Gamma_0} \boldsymbol{b}\cdot \llbracket v \rrbracket \{\!\{u_h\}\!\}\dif s + \int_{\Gamma} \frac{\abs{\boldsymbol{b}\cdot \boldsymbol{n}_e}}{2}\llbracket v \rrbracket \cdot\llbracket u_h\rrbracket \dif s.\label{bilinear_form} 
\end{align} We call \eqref{formfraca_brokenGrad}--\eqref{bilinear_form} the primal formulation of the method and the bilinear form $B^{DG}_h(\cdot, \cdot)$ the primal form.

\section{Existence and Uniqueness of the Solution}\label{existenceUniq}

In this section, we prove the existence and uniqueness of the numerical solution. We start by analysing some basic properties of the method, namely, we show the boundedness of the bilinear form $B^{DG}_h(\cdot, \cdot)$ defined by \eqref{bilinear_form}. Then, we prove an inf-sup condition in connection with finite-dimensional subspaces, with $\mbox{dim}(\mathcal{W}_h) = \mbox{dim}(\mathcal{V}_h)$.

\subsection{Boundedness}\label{subsection:boundedness}
We prove that the bilinear form $B^{DG}_h(\cdot,\cdot)$ is continuous on
$H^2(\mathcal{T}_h)\times H^2(\mathcal{T}_h)$ endowed with the norm
$\opnorm{\cdot}$. More precisely, there exists a constant $C_b>0$ such that
\begin{equation}\label{boundedness}
    \abs{B^{DG}_h(u_h,v)} \leq C_b \opnorm{u_h}\,\opnorm{v},
    \quad \forall\, u_h,v \in H^2(\mathcal{T}_h).
\end{equation}
In particular, since $\mathcal{W}_h,\mathcal{V}_h \subset H^2(\mathcal{T}_h)$, the estimate applies to all discrete functions. Recalling the definition of $B^{DG}_h(\cdot,\cdot)$, we have
\begin{align}
    \abs{B^{DG}_h(u_h,v)}
    &\leq \abs{(\nabla_h u_h,\nabla_h v)_{L^2(\Omega_h)}}
    + \abs{(\boldsymbol{b}u_h,\nabla_h v)_{L^2(\Omega_h)}}
    + \abs{(c u_h,v)_{L^2(\Omega_h)}} \nonumber\\
    &\quad + \int_{\Gamma_0}\abs{\llbracket u_h\rrbracket\cdot\{\!\{\nabla_h v\}\!\}}\,\dif s
    + \int_{\Gamma_0}\abs{\llbracket v\rrbracket\cdot\{\!\{\nabla_h u_h\}\!\}}\,\dif s \nonumber\\
    &\quad + \int_{\Gamma}\abs{\frac{\eta}{h_e}\llbracket u_h\rrbracket\cdot\llbracket v\rrbracket}\,\dif s
    + \int_{\Gamma_0}\abs{\boldsymbol{b}\cdot\llbracket v\rrbracket\{\!\{u_h\}\!\}}\,\dif s \nonumber\\
    &\quad + \int_{\Gamma}\frac{\abs{\boldsymbol{b}\cdot\boldsymbol{n}_e}}{2}
    \abs{\llbracket u_h\rrbracket\cdot\llbracket v\rrbracket}\,\dif s.
    \label{abs_bilinearForm}
\end{align}

Each term on the right-hand side of \eqref{abs_bilinearForm} is now bounded in
terms of the norm $\opnorm{\cdot}$.

\begin{itemize}
\item For the volume terms, the Cauchy--Schwarz inequality yields
\begin{align}\label{bound_1}
   &\abs{(\nabla_h u_h,\nabla_h v)_{L^2(\Omega_h)}}
   + \abs{(\boldsymbol{b}u_h,\nabla_h v)_{L^2(\Omega_h)}}
   + \abs{(c u_h,v)_{L^2(\Omega_h)}} \nonumber\\
   &\leq 2\hat{C}\sum_{k=1}^K
   \big(\norm{u_h}_{L^2(T^k)}+\norm{\nabla u_h}_{L^2(T^k)}\big)
   \big(\norm{v}_{L^2(T^k)}+\norm{\nabla v}_{L^2(T^k)}\big) \nonumber\\
   &\leq 4\hat{C}\,\opnorm{u_h}\,\opnorm{v}, \qquad \hat{C}=\max\{\norm{\boldsymbol{b}}_{L^\infty(\Omega_h)},\norm{c}_{L^\infty(\Omega_h)}\},
\end{align}
where
\[
\norm{c}_{L^{\infty}(\Omega_h)}\!=\!\mbox{ess} \sup \{\abs{c(\boldsymbol{x})}: \boldsymbol{x} \in \Omega_h\}, \qquad \norm{\boldsymbol{b}}_{L^{\infty}(\Omega_h)}\!=\!\max_{1\leq i \leq 2}\norm{b_i}_{L^{\infty}(\Omega_h)}.
\]

\item We now bound the interface terms.
Using the trace inequality \cite[(2.4)]{Arnold_intPen}, for
$e \subset \partial T^k$, $T^k \in \mathcal{T}_h$,
\begin{equation}\label{trace_2.4}
\norm{w}_{L^2(e)}^2
\leq
C_T^2\big(
h_e^{-1}\norm{w}_{L^2(T^k)}^2
+
h_e\abs{w}_{H^1(T^k)}^2
\big),
\qquad w\in H^1(T^k),
\end{equation}
with $w=\nabla v$, we obtain
\begin{align}\label{bound_04}
\sum_{e\in\mathcal{E}_0}
\int_e
\abs{
\llbracket u_h\rrbracket\cdot
\{\!\{\nabla_h v\}\!\}
}
\,\dif s
&\leq
\abs{u_h}_{\ast}
\left(
\sum_{e\in\mathcal{E}_h}
h_e
\norm{\{\!\{\nabla_h v\}\!\}}_{L^2(e)}^2
\right)^{1/2}
\nonumber\\
&\leq
\abs{u_h}_{\ast}
\left(
\sum_{k=1}^K
3C_T^2
\big(
\abs{v}_{H^1(T^k)}^2
+
h_k^2
\abs{v}_{H^2(T^k)}^2
\big)
\right)^{1/2}
\nonumber\\
&\leq
\sqrt{3} C_T
\abs{u_h}_{\ast}\,
\opnorm{v}.
\end{align}

An analogous argument gives
\begin{align}\label{bound_05}
    \int_{\Gamma_0}
    \abs{\llbracket v\rrbracket\cdot\{\!\{\nabla_h u_h\}\!\}}\,\dif s
    &\leq \sqrt{3}\,C_T \,\abs{v}_\ast \opnorm{u_h}.
\end{align}

\item For the penalty term,
\begin{equation}\label{bound_06}
    \int_{\Gamma}\abs{\frac{\eta}{h_e}\llbracket u_h\rrbracket\cdot\llbracket v\rrbracket}\,\dif s
    \leq \eta\,\abs{u_h}_\ast\,\abs{v}_\ast.
\end{equation}

\item Proceeding similarly for the convection-related jump term, we find
\begin{equation}\label{bound_07}
    \int_{\Gamma_0}\abs{\boldsymbol{b}\cdot\llbracket v\rrbracket\{\!\{u_h\}\!\}}\,\dif s
    \leq \sqrt{6}\,C_T\norm{\boldsymbol{b}}_{L^\infty(\Omega_h)}
    \abs{v}_\ast\,\opnorm{u_h}.
\end{equation}

\item Finally,
\begin{equation}\label{bound_08}
    \int_{\Gamma}\frac{\abs{\boldsymbol{b}\cdot\boldsymbol{n}_e}}{2}
    \abs{\llbracket u_h\rrbracket\cdot\llbracket v\rrbracket}\,\dif s
    \leq \abs{u_h}_b\,\abs{v}_b.
\end{equation}
\end{itemize}
Collecting \eqref{bound_1}, \eqref{bound_04}--\eqref{bound_08}, we conclude that
\eqref{boundedness} holds with
\[
C_b = 4\hat{C}
+ 2\sqrt{3}C_T 
+ \eta
+ \sqrt{6}C_T\norm{\boldsymbol{b}}_{L^\infty(\Omega_h)}
+ 1.
\]

\begin{remark}
The continuity of the bilinear form is established on $H^2(\mathcal{T}_h) \times H^2(\mathcal{T}_h)$, following the framework of \cite{Arnold_intPen}. From a functional-analytic perspective, this requirement could be relaxed to $H^{3/2+\epsilon}(\mathcal{T}_h)$ for any $\epsilon > 0$, since the trace theorem ensures $\nabla v \cdot \boldsymbol{n} \in L^2(\partial T^k)$ for $v \in H^{3/2+\epsilon}(T^k)$, which suffices for the interface integrals to be well defined. We retain the $H^2(\mathcal{T}_h)$ framework for simplicity and consistency with the $H^2(\Omega)$ regularity of the exact solution.
\end{remark}

\subsection{Inf-sup condition}

The following theorem establishes an inf-sup condition for the DG--ROD formulation \eqref{formfraca_brokenGrad}--\eqref{bilinear_form}. This result implies the well-posedness of the discrete problem \eqref{formfraca_brokenGrad}--\eqref{bilinear_form} based on employing the Banach–Ne\v{c}as–Babu\v{s}ka (BNB) Theorem (see \cite{Ern_Guermond}, Theorem $2.6$).

\begin{theorem}[Inf-sup condition]\label{ProposicaoDesAlpha_v2}
Consider the bilinear form $B^{DG}_h(\cdot, \cdot)$ defined in \eqref{bilinear_form}. Provided $h < h_0$ and $ \eta > \eta_0$, with $h_0$ and $\eta_0$ explicitly defined in the proof of the theorem, there hold:
\begin{enumerate}
    \item there exists a constant $\alpha >0$ such that \begin{equation}\label{DesigAlpha_v2}
        \inf_{w \in \mathcal{W}_h \setminus\{0\}} \sup_{v \in \mathcal{V}_h \setminus \{0\}} \frac{B^{DG}_h(w,v)}{\opnorm{w}\opnorm{v}} \geq \alpha; \tag{BNB1}
        \end{equation}
    \item for any $v \in \mathcal{V}_h$ such that \begin{equation}\label{bnb2}
     B_h^{DG}(w,v) = 0, \quad \mbox{for all $w \in \mathcal{W}_h$}, \tag{BNB2}
    \end{equation} then $v=0$.
\end{enumerate}
\end{theorem}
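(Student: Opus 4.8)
The plan is to treat \eqref{formfraca_brokenGrad}--\eqref{bilinear_form} as a Petrov--Galerkin problem and to reduce both assertions to a single near-coercivity estimate obtained by pairing each trial function with a carefully chosen test function. Since $\dim(\mathcal{W}_h) = \dim(\mathcal{V}_h)$, once \eqref{DesigAlpha_v2} is established the second claim \eqref{bnb2} follows from finite-dimensional linear algebra: writing $B^{DG}_h$ as a square matrix $M$ in bases of $\mathcal{W}_h$ and $\mathcal{V}_h$, condition \eqref{DesigAlpha_v2} forces $M^{\textrm{T}}$ to be injective, hence $M$ to be invertible, so that \eqref{bnb2}---which asserts injectivity of $M$---is automatic. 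I would therefore concentrate on \eqref{DesigAlpha_v2}. For each $w \in \mathcal{W}_h$ I would construct a test function $v_w \in \mathcal{V}_h$ that coincides with $w$ at every discontinuous Galerkin node not lying on $\partial\Omega_h$ and vanishes at the nodes on each boundary edge $e^{kB}$, so that $v_w = w$ on every interior element; then I would split $B^{DG}_h(w, v_w) = B^{DG}_h(w, w) - B^{DG}_h(w, \delta)$ with $\delta := w - v_w$ and bound the two pieces separately.

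For the diagonal term I would prove coercivity $B^{DG}_h(w,w) \geq c_0 \opnorm{w}^2$. Integration by parts in the convection volume term, combined with assumption \eqref{cx_bx_cond}, renders the reaction--convection volume contribution non-negative and cancels the interior convection jump integral against the element-boundary fluxes, leaving the full upwind seminorm $\abs{w}_b^2$ together with a non-negative boundary remainder. The two symmetric interior consistency terms are controlled by the Cauchy--Schwarz and trace inequalities (cf. \eqref{bound_04}) and absorbed, for $\eta$ large, into the gradient energy and the interior part of the penalty $\eta\abs{w}_\ast^2$; crucially, the derivation of \eqref{bilinear_form} carries no diffusion consistency term on $\partial\Omega_h$, so the boundary penalty is purely favourable. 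This yields control of $\norm{\nabla_h w}_{L^2(\Omega_h)}^2 + \abs{w}_\ast^2 + \abs{w}_b^2$; the remaining $L^2$ part of $\opnorm{w}$ is recovered by a discrete Poincar\'e--Friedrichs inequality (\cite{Brenner}), whose boundary jump contribution is exactly the term present in $\abs{w}_\ast^2$, and the second-order part by \eqref{Des_H2_H1_lemma4.5.3}.

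The crux, and the step I expect to be hardest, is to show that the perturbation $\delta$ is of higher order, $\opnorm{\delta} \leq C h \opnorm{w}$. The geometric input is that, since $\partial\Omega$ is smooth, each boundary node $\boldsymbol{x}_i^k$ on $e^{kB}$ lies within distance $O(h_k^2)$ of the projection point $P_r^k \in \partial\Omega$ along the line through $O_k$. Because $w \in \mathcal{W}_h$ vanishes at the points $P_r^k$ and at the two vertices of $T^k$ on $\partial\Omega$, a first-order Taylor expansion of the polynomial $w$---extended to $\Delta_k$ via property (3) of $\mathcal{W}_h$---gives $\abs{w(\boldsymbol{x}_i^k)} \leq C h_k^2 \norm{\nabla w}_{L^\infty(T^k)}$. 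The nodal values of $\delta$ are precisely these quantities on boundary elements and zero elsewhere, so a scaling argument on the reference element together with the inverse inequality $\norm{\nabla w}_{L^\infty(T^k)} \leq C h_k^{-1}\abs{w}_{H^1(T^k)}$ turns each contribution to $\opnorm{\delta}^2$ into an $O(h_k^2)$ multiple of $\abs{w}_{H^1(T^k)}^2$, which sums to $\opnorm{\delta}^2 \leq C h^2 \opnorm{w}^2$.

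Combining the two estimates via the boundedness bound \eqref{boundedness} gives $B^{DG}_h(w,v_w) \geq (c_0 - C_b C h)\opnorm{w}^2$ and $\opnorm{v_w} \leq (1 + Ch)\opnorm{w}$. Hence, for $h$ sufficiently small, $B^{DG}_h(w,v_w)>0$ whenever $w\neq 0$ (so in particular $v_w\neq 0$), and the quotient $B^{DG}_h(w,v_w)/(\opnorm{w}\,\opnorm{v_w})$ is bounded below by a positive constant $\alpha$, independent of $w$ and $h$. Taking the supremum over $v \in \mathcal{V}_h\setminus\{0\}$ and then the infimum over $w \in \mathcal{W}_h\setminus\{0\}$ yields \eqref{DesigAlpha_v2}, and \eqref{bnb2} follows from the dimension argument above.
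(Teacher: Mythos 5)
Your proposal follows essentially the same route as the paper's proof: you build the test function by subtracting from $w$ its values at the non-vertex nodes of each boundary edge (the paper's residual $r(w)$, your $\delta$), you prove exactly the two inequalities the paper labels as its goal ($B^{DG}_h(w,v_w)\gtrsim\opnorm{w}^2$ and $\opnorm{v_w}\lesssim\opnorm{w}$), you obtain the nodal smallness $\abs{w(\boldsymbol{x}_i^k)}\lesssim h_k\abs{w}_{H^1(T^k)}$ from the vanishing of $w$ at the nearby points of $\partial\Omega$, and you deduce \eqref{bnb2} from the equality of dimensions. The one genuinely different (and cleaner) organisational choice is that you bound the entire cross term $B^{DG}_h(w,\delta)$ in a single stroke via the boundedness estimate \eqref{boundedness} together with $\opnorm{\delta}\leq Ch\opnorm{w}$, whereas the paper expands $B^{DG}_h(w,w-r(w))$ into some fifteen terms and estimates each with its own Young parameter; your packaging is legitimate because $w,\delta\in H^2(\mathcal{T}_h)$, and it buys a much shorter argument at the cost of a less explicit (but still $h$-independent) constant.

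One step is imprecise as written. After integrating the convection volume term by parts and cancelling against $\int_{\Gamma_0}\boldsymbol{b}\cdot\llbracket w\rrbracket\{\!\{w\}\!\}$, the surviving boundary remainder is $-\tfrac12\int_{\partial\Omega_h}w^2\,\boldsymbol{b}\cdot\boldsymbol{n}\,\dif s$, which is \emph{not} non-negative: on outflow portions of $\partial\Omega_h$ (where $\boldsymbol{b}\cdot\boldsymbol{n}>0$) it is negative, and if you instead neutralise it with the boundary part of the upwind penalty you forfeit that part of $\abs{w}_b^2$ in the lower bound. The repair is exactly the ingredient you already use elsewhere: since $w\in\mathcal{W}_h$ vanishes at the vertices of $\partial\Omega_h$, the mean value theorem gives $\abs{w}\leq h_k\norm{\nabla w}_{L^\infty(T^k)}$ on $e^{kB}$, whence $\int_{\partial\Omega_h}\abs{w^2\,\boldsymbol{b}\cdot\boldsymbol{n}}\,\dif s\leq Ch\abs{w}^2_{H^1(\mathcal{T}_h)}$, an $O(h)$ perturbation absorbed for $h$ small --- this is precisely how the paper treats that term. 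With that correction your argument closes.
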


\begin{proof}
We first show \eqref{DesigAlpha_v2}. Let $w \in \mathcal{W}_h$ be arbitrary. We will construct a test function $v \in \mathcal{V}_h$ such that
\begin{equation}\label{goal_proof}
    B^{DG}_h(w,v) \geq C_\alpha \opnorm{w}^2
    \quad \text{and} \quad
    \opnorm{v} \leq C_v \opnorm{w},
\end{equation}
with constants $C_\alpha, C_v >0$ independent of $h$.
The inf-sup condition \eqref{DesigAlpha_v2} then follows with
$\alpha = C_\alpha/C_v$.

The function $v$ is defined by modifying $w$ at the nodes lying on
$\partial\Omega_h$ that are not mesh vertices.
More precisely, we define the residual
\[
r(w) = w - v = \bigoplus_{k \in I^B} r^k(w),
\]
where, for each boundary element $T^k$,
\[
r^k(w) = \sum_{i \in I^{kB}} w(\boldsymbol{x}_i^k)\,\ell_i^k.
\]
By construction, $r(w)$ vanishes on all elements that do not touch
$\partial\Omega_h$.

Taking advantage of the relation between $w$ and $v$, we may write
\begin{align} \label{Eq1_v2}
 &B^{DG}_h(w,v)=\sum_{k=1}^K \left((\nabla w,\nabla w)_{L^2(T^k)}-(\boldsymbol{b} w,\nabla w)_{L^2(T^k)}+(cw,w)_{L^2(T^k)}\right) \nonumber \\
 &+\!\int_{\Gamma}\frac{\eta}{h_e} \llbracket w\rrbracket\!\cdot\!\llbracket w \rrbracket \dif s+ \int_{\Gamma}\frac{\abs{\boldsymbol{b} \cdot \boldsymbol{n}_e}}{2} \llbracket w\rrbracket\cdot\llbracket w \rrbracket \dif s  -2\int_{\Gamma_0}\llbracket w\rrbracket\cdot\{\!\{\nabla_h w\}\!\} \dif s  \nonumber \\
  &+\int_{\Gamma_0}\boldsymbol{b} \cdot \llbracket w\rrbracket \{\!\{ w\}\!\} \dif s -\sum_{k \in I^B}\left((\nabla w, \nabla r^k(w))_{L^2(T^k)}-(\boldsymbol{b} w,\!\nabla r^k(w))_{L^2(T^k)}\right)\nonumber \\
  &-\sum_{k \in I^B}(cw,r^k(w))_{L^2(T^k)} -\!\sum_{e \in \mathcal{E}^B_h} \int_{e}\!\frac{\eta}{h_e}\!\llbracket r(w)\rrbracket\!\cdot\!\llbracket w\!\rrbracket \dif s\!-\!\int_{\mathcal{E}^B_h}\!\frac{\abs{\boldsymbol{b}\!\cdot \!\boldsymbol{n}_e}}{2} \llbracket r(w)\rrbracket\!\cdot\!\llbracket w\!\rrbracket \dif s \nonumber \\
  &+\sum_{e \in \mathcal{E}_0^B}\!\int_{e}\llbracket w\rrbracket\!\cdot\!\{\!\{\nabla_h r(w)\}\!\} \dif s+\sum_{e \in \mathcal{E}_0^B}\int_{e}\llbracket r(w) \rrbracket\cdot\{\!\{ \nabla_h w\}\!\}\dif s \nonumber \\
  &-\sum_{e \in \mathcal{E}_0^B}\int_{e} \boldsymbol{b}\cdot\llbracket r(w) \rrbracket\{\!\{w\}\!\}\dif s, 
 \end{align} where $\mathcal{E}_h^B = \cup_{k \in I^B} \partial T^k$ and $\mathcal{E}_0^B = \mathcal{E}_h^B  \cap \Gamma_0$. 

\medskip
We now estimate lower bounds term by term individually.

\begin{itemize}
\item 
Using the definition of the seminorms \eqref{semiNormDG}, we directly obtain
\begin{align}
&\sum_{k=1}^K (\nabla w, \nabla w)_{L^2(T^k)}
= |w|^2_{H^1(\mathcal{T}_h)}, \label{upperBound_gradw} \\
&\sum_{e \in \mathcal{E}_h}\int_{e}
\frac{\eta}{h_e}\llbracket w \rrbracket\cdot\llbracket w \rrbracket\,\dif s
= \eta |w|_*^2, \label{upperBound_w*} \\
&\sum_{e \in \mathcal{E}_h}\int_{e}
\frac{|\boldsymbol{b}\cdot\boldsymbol{n}_e|}{2}
\llbracket w \rrbracket\cdot\llbracket w \rrbracket\,\dif s
= |w|_b^2. \label{upperBound_wb}
\end{align}
These terms provide the positive contributions that control the DG norm
and will be used to absorb the remaining terms in the inf-sup analysis.

\item Integrating by parts, the second term in \eqref{Eq1_v2} can be rewritten as
\begin{equation}\label{int_parts_b_gradw}
    -\sum_{k=1}^K (\boldsymbol{b} w, \nabla w)_{L^2(T^k)}
    = -\sum_{k=1}^K \int_{\partial T^k} \frac{w^2}{2} \, \boldsymbol{b} \cdot \boldsymbol{n} \, \mathrm{d}s
      + \sum_{k=1}^K \int_{T^k} \frac{w^2}{2} \, \nabla \cdot \boldsymbol{b} \, \mathrm{d}\boldsymbol{x}.
\end{equation}

The second term on the right-hand side of \eqref{int_parts_b_gradw} will be bounded simultaneously with the reactive term, following the condition stated in \eqref{cx_bx_cond}. 
We now estimate a lower bound for the first term. Using the average and jump operators (see \cite{metodoDG}, Lemma 7.9), we have
\begin{align}\label{formula_jumpsAverage}
    \sum_{T^k \in \mathcal{T}_h} \int_{\partial T^k} w^2 \, \boldsymbol{n} \cdot \boldsymbol{b} \, \mathrm{d}s
    &= \int_{\Gamma} \llbracket w^2 \rrbracket \cdot \{\!\{ \boldsymbol{b} \}\!\} \, \mathrm{d}s
      + \int_{\Gamma_0} \{\!\{ w^2 \}\!\} \llbracket \boldsymbol{b} \rrbracket \, \mathrm{d}s.
\end{align}

Exploiting the continuity of $\boldsymbol{b}$ across interfaces, we get
\begin{equation}\label{int_parts_b_gradw_eq1}
  -\sum_{k=1}^K \int_{\partial T^k} \frac{w^2}{2} \, \boldsymbol{b} \cdot \boldsymbol{n} \, \mathrm{d}s
  = - \frac{1}{2} \int_{\Gamma} \boldsymbol{b} \cdot \llbracket w^2 \rrbracket \, \mathrm{d}s.
\end{equation}

Using the standard relation between jumps and averages on interior edges \cite{Ern_Pietro}, 
$
    \frac{1}{2} \llbracket w^2 \rrbracket = \llbracket w \rrbracket \cdot \{\!\{ w \}\!\},
$
we can rewrite \eqref{int_parts_b_gradw_eq1} as
\begin{align}\label{int_parts_b_gradw_eq1_separ_aux}
    -\sum_{k=1}^K \int_{\partial T^k} \frac{w^2}{2} \, \boldsymbol{b}\!\cdot\!\boldsymbol{n} \, \mathrm{d}s
    &= -\int_{\Gamma_0} \boldsymbol{b} \cdot \llbracket w \rrbracket \{\!\{ w \}\!\} \, \mathrm{d}s\!-\!\frac{1}{2} \int_{\partial \Omega_h} w^2 \, \boldsymbol{b} \cdot \boldsymbol{n} \, \mathrm{d}s.
\end{align}

Adding the seventh term in \eqref{Eq1_v2} gives
\begin{equation}\label{int_parts_b_gradw_eq1_separ}
    -\sum_{k=1}^K \int_{\partial T^k} \frac{w^2}{2} \, \boldsymbol{b} \cdot \boldsymbol{n} \, \mathrm{d}s
    + \int_{\Gamma_0} \boldsymbol{b} \cdot \llbracket w \rrbracket \{\!\{ w \}\!\} \, \mathrm{d}s
    \geq - \frac{1}{2} \int_{\partial \Omega_h} |w^2 \, \boldsymbol{b} \cdot \boldsymbol{n}| \, \mathrm{d}s.
\end{equation}

We now bound the term on the right-hand side. Let $O$ be a vertex of the edge $e^{kB}$, $k \in I^B$, and $M \in e^{kB}$. 
Since $w(O) = 0$, the mean value theorem yields
\begin{equation}
    |w(M)| \leq h_k \, \|\nabla w\|_{L^\infty(T^k)}, \quad \forall M \in e^{kB}.
\end{equation}

Using this, Lemma 3.1 in \cite{ruas2019OptimalLA}, and the fact that $\mathrm{length}(e^{kB}) \leq h_k$, we obtain
\begin{align}\label{w_square_boundary_final}
    \int_{\partial \Omega_h} |w^2 \, \boldsymbol{b}\cdot \boldsymbol{n}| \, \mathrm{d}s
    &\leq C_J^2 \, \|\boldsymbol{b}\|_{L^\infty(\Omega_h)} \, h \, |w|^2_{H^1(\mathcal{T}_h)}.
\end{align}

Substituting \eqref{w_square_boundary_final} into \eqref{int_parts_b_gradw_eq1_separ} gives
\begin{equation}\label{secondTerm_1}
    -\sum_{k=1}^K \int_{\partial T^k} \frac{w^2}{2} \, \boldsymbol{b}\cdot \boldsymbol{n} \, \mathrm{d}s
    + \int_{\Gamma_0} \boldsymbol{b} \cdot \llbracket w \rrbracket \{\!\{ w \}\!\} \, \mathrm{d}s
    \geq - \frac{1}{2} C_J^2 \, \|\boldsymbol{b}\|_{L^\infty(\Omega_h)} \, h \, |w|^2_{H^1(\mathcal{T}_h)}.
\end{equation}

\item For the sixth term in \eqref{Eq1_v2} (edge term involving $\llbracket w \rrbracket$ and $\nabla w$), using \eqref{bound_04} and \eqref{Des_H2_H1_lemma4.5.3}, together with a trace inequality, the Cauchy–Schwarz inequality, and Young's inequality, we obtain, for any $\epsilon_1>0$, the following bound 
\begin{equation}\label{upperBound_w_graw_E}
    -2 \int_{\Gamma_0} \llbracket w \rrbracket \cdot \{\nabla_h w\} \dif s 
    \geq -\sqrt{3}\, C_T \sqrt{1+C^2} \left( \epsilon_1\, |w|_{H^1(\mathcal{T}_h)}^2 + \frac{|w|_*^2}{\epsilon_1} \right).
\end{equation}

\item To bound the terms containing $r(w)$ (eighth to tenth terms), we first note that
\begin{align}
\norm{r^k(w)}_{L^2(T^k)} &\leq \sum_{i \in I^{kB}} \abs{w(\boldsymbol{x}^{k}_i)} \norm{\ell_i^k}_{L^2(T^k)}, \label{maj_residuo}\\
\norm{\nabla r^k(w)}_{L^2(T^k)} &\leq \sum_{i \in I^{kB}} \abs{w(\boldsymbol{x}^{k}_i)} \norm{\nabla \ell_i^k}_{L^2(T^k)}. \label{maj_grad_residuo}
\end{align}

From standard results, there exist mesh-independent constants $C_1$ and $C_2$ such that
\[
\norm{\ell_i^k}_{L^2(T^k)} \leq C_1 h_k, \quad 
\norm{\nabla \ell_i^k}_{L^2(T^k)} \leq C_2.
\]

Recall that $w(P^k_j) = 0$, for $j=1, \ldots, N-1$, where $P^k_j$ is the point on $\partial \Omega$ associated with the node $x_j^k$, for $j \in I^{kB}$. Then, considering a Taylor expansion about $P^k_j$, we get
\begin{align*}
    |w(\boldsymbol{x}^{k}_j)| \leq \textrm{length}(\overline{P^k_j\boldsymbol{x}^{k}_j}) \norm{\nabla w}_{L^{\infty}(T^k \cup \Delta_k)}.
\end{align*} Hence, following \eqref{distance_boundaries}, which establishes the $\mathcal{O}(h^2)$ distance between the physical and computational boundaries, for a suitable constant $C_{\partial \Omega}$ depending only on $\Omega$, we have 
\begin{align*}
    |w(\boldsymbol{x}^{k}_j)| \leq C_{\partial \Omega}h_k^2  \norm{\nabla w}_{L^{\infty}(T^k \cup \Delta_k)}.
\end{align*}
Since $w$ is a polynomial in $T^k$, according to Lemma 3.1 from \cite{ruas2019OptimalLA}, there exist mesh-independent constants $C_{\infty}$ and $C_J$ such that
\begin{align*}
    \norm{\nabla w}_{L^{\infty}(T^k \cup \Delta_k)} \leq C_{\infty} \ \norm{\nabla w}_{L^{\infty}(T^k)}\leq C_{\infty} C_J \frac{1}{h_k}  \| \nabla w\|_{L^{2}(T^k)}.
\end{align*}
Thus, using the above inequalities, we may write
\begin{align*}
    |w(\boldsymbol{x}^{k}_j)| \leq C_{\partial \Omega} C_{\infty} C_J h_k  \|\nabla w\|_{L^{2}(T^k)}.
\end{align*}


Hence, we obtain the following estimates:
\begin{equation}\label{norm_Residuo}
\norm{r^k(w)}_{L^2(T^k)} \leq \Tilde{C}_1 h^2 \norm{\nabla w}_{L^2(T^k)}, \quad
\Tilde{C}_1 = (N-1) C_1 C_{\partial \Omega_h} C_\infty C_J,
\end{equation}
\begin{equation}\label{norm_gradResiduo}
\norm{\nabla r^k(w)}_{L^2(T^k)} \leq \Tilde{C}_2 h \norm{\nabla w}_{L^2(T^k)}, \quad
\Tilde{C}_2 = (N-1) C_2 C_{\partial \Omega_h} C_\infty C_J.
\end{equation}

Using \eqref{norm_gradResiduo}, we bound the eighth term in \eqref{Eq1_v2}:
\begin{align}\label{upperBound_gradw_grar_Eb}
-\sum_{k \in I^B} (\nabla w, \nabla r^k(w))_{L^2(T^k)}
&\geq -\sum_{k \in I^B} \norm{\nabla w}_{L^2(T^k)} \norm{\nabla r^k(w)}_{L^2(T^k)} \nonumber \\
&\geq - \Tilde{C}_2 h \abs{w}^2_{H^1(\mathcal{T}_h)}.
\end{align}

Similarly, for the ninth term and any $\epsilon_2 > 0$,
\begin{align}\label{upperBound_bgradw_r_Eb}
\sum_{k \in I^B} (\boldsymbol{b} w, \nabla &r^k(w))_{L^2(T^k)}
\geq -\sqrt{2} \norm{\boldsymbol{b}}_{L^\infty(\Omega_h)} \sum_{k \in I^B} \norm{w}_{L^2(T^k)} \norm{\nabla r^k(w)}_{L^2(T^k)} \nonumber \\
&\geq -\frac{\sqrt{2} \norm{\boldsymbol{b}}_{L^\infty(\Omega_h)} \Tilde{C}_2}{2} h
\left( \epsilon_2 \norm{w}^2_{L^2(\Omega_h)} + \frac{\abs{w}^2_{H^1(\mathcal{T}_h)}}{\epsilon_2} \right).
\end{align}

Finally, for the tenth term and any $\epsilon_3 > 0$, using \eqref{norm_Residuo}:
\begin{align}\label{upperBound_w_r_Eb}
-\sum_{k \in I^B} (c w, r^k(w))_{L^2(T^k)}
&\geq - \sum_{k \in I^B} \norm{c w}_{L^2(T^k)} \norm{r^k(w)}_{L^2(T^k)} \nonumber \\
&\geq - \Tilde{C}_1 h^2 \norm{c w}_{L^2(\Omega_h)} \abs{w}_{H^1(\mathcal{T}_h)} \nonumber \\
&\geq -\frac{\Tilde{C}_1}{2} h^2 \left(\!\epsilon_3 \norm{c w}^2_{L^2(\Omega_h)} + \frac{\abs{w}^2_{H^1(\mathcal{T}_h)}}{\epsilon_3}\!\right).
\end{align}

\item We now estimate the DG jump terms of $r(w)$, corresponding to the eleventh through last term in \eqref{Eq1_v2}.

For the eleventh term, using $\epsilon_4>0$, we have
\begin{align}\label{des_coercivity}
-\sum_{e \in \mathcal{E}^B_h} \int_{e} \frac{\eta}{h_e} \llbracket r(w) \rrbracket \cdot \llbracket &w \rrbracket \dif s
\geq -\sum_{e \in \mathcal{E}^B_h} \frac{\eta}{h_e} \abs{\llbracket r(w) \rrbracket \cdot \llbracket w \rrbracket} \dif s \nonumber \\
&\geq -\eta \sum_{e \in \mathcal{E}^B_h} \norm{ \frac{\llbracket r(w) \rrbracket}{h_e^{1/2}}}_{L^2(e)} \norm{ \frac{\llbracket w \rrbracket}{h_e^{1/2}}}_{L^2(e)} \nonumber \\
&\geq -\eta \abs{r(w)}_{\ast} \abs{w}_{\ast} \geq -\frac{\eta}{2} \left(\!\epsilon_4 \abs{r(w)}^2_{\ast}\!+\!\frac{\abs{w}^2_{\ast}}{\epsilon_4}\!\right).
\end{align}

Using the trace inequality \eqref{trace_2.4} for $r^k(w)$ and \eqref{meshIneq_hk_he}, we get
\begin{align*}
\abs{r(w)}^2_{\ast} 
&\leq \sum_{k=1}^K \sum_{e \in \mathcal{E}_h \cap \partial T^k} 2 h_e^{-1} \norm{r^k(w)}^2_{L^2(e)} \\
&\leq \sum_{k=1}^K \sum_{e \in \mathcal{E}_h \cap \partial T^k} 2 C_T^2 h_e^{-2} \Big(\norm{r^k(w)}^2_{L^2(T^k)} + h_e^2 \norm{\nabla r^k(w)}^2_{L^2(T^k)} \Big) \\
&\leq \sum_{k=1}^K C^2_{\ast} h^2 \abs{w}^2_{H^1(T^k)}, \quad
C^2_{\ast} = \frac{6 C_T^2 (\Tilde{C}_1^2 + \Tilde{C}_2^2)}{\mu^2}.
\end{align*}

Thus, \eqref{des_coercivity} becomes
\begin{equation}\label{upperBound_rw_graw_Eb}
-\sum_{e \in \mathcal{E}^B_h} \int_{e} \frac{\eta}{h_e} \llbracket r(w) \rrbracket \cdot \llbracket w \rrbracket \dif s
\geq -\frac{\eta}{2} \left( \epsilon_4 C^2_{\ast} h^2 \abs{w}^2_{H^1(\mathcal{T}_h)} + \frac{\abs{w}^2_{\ast}}{\epsilon_4} \right).
\end{equation}

For the twelfth term, using $\epsilon_5>0$, we have
\begin{align}\label{aux_bound_12}
-&\sum_{e \in \mathcal{E}_h^B} \int_e \frac{\abs{\boldsymbol{b}\cdot\boldsymbol{n}_e}}{2} \llbracket w \rrbracket \cdot \llbracket r(w) \rrbracket \dif s \nonumber \\
&\geq - \abs{w}_b \abs{r(w)}_b \geq -\frac{1}{2} \left( \epsilon_5 \abs{w}^2_b + \frac{\abs{r(w)}^2_b}{\epsilon_5} \right).
\end{align}

Using $h_e<1$, we bound $\abs{r(w)}_b$ by $\abs{r(w)}_{\ast}$:
\begin{align}\label{relationship_seminorms}
\abs{r(w)}^2_b &\leq \frac{\norm{\boldsymbol{b}}_{L^\infty(\Omega_h)}}{2} \sum_{e \in \mathcal{E}_h} h_e^{-1} \llbracket r(w) \rrbracket^2 \dif s = \frac{\norm{\boldsymbol{b}}_{L^\infty(\Omega_h)}}{2} \abs{r(w)}^2_{\ast} \nonumber \\
&\leq \frac{\norm{\boldsymbol{b}}_{L^\infty(\Omega_h)}}{2} C_*^2 h^2 \abs{w}^2_{H^1(\mathcal{T}_h)}.
\end{align}

Substituting \eqref{relationship_seminorms} into \eqref{aux_bound_12} gives
\begin{equation}\label{bound_12}
-\sum_{e \in \mathcal{E}_h^B} \int_e \frac{\abs{\boldsymbol{b}\!\cdot\!\boldsymbol{n}_e}}{2} \llbracket w \rrbracket\!\cdot\!\llbracket r(w) \rrbracket \dif s
\geq -\frac{\epsilon_5}{2} \abs{w}^2_b\!- \frac{\norm{\boldsymbol{b}}_{L^\infty(\Omega_h)} C_*^2 h^2}{4 \epsilon_5} \abs{w}^2_{H^1(\mathcal{T}_h)}.
\end{equation}

For the thirteenth term, using $\epsilon_6>0$, we get
\begin{align}\label{upperBound_w_gradr_Eb}
\sum_{e \in \mathcal{E}_0^B} \int_e \llbracket w \rrbracket \cdot \{\!\{\nabla_h r(w)\}\!\} \dif s
\geq -\frac{\epsilon_6}{2} \abs{w}^2_{\ast} - \frac{3}{2 \epsilon_6} C_T^2 (1+C^2) \Tilde{C}_2^2 h^2 \abs{w}^2_{H^1(\mathcal{T}_h)}.
\end{align}

For the fourteenth term, similarly,
\begin{align}\label{upperBound_rw_gradw_Eb}
\sum_{e \in \mathcal{E}_0^B} \int_e \llbracket r(w) \rrbracket \cdot \{\!\{\nabla_h w\}\!\} \dif s
\geq -\sqrt{3} C_T \sqrt{1+C^2} C_* h \abs{w}^2_{H^1(\mathcal{T}_h)}.
\end{align}

Finally, for the last term, using $\epsilon_7>0$, we have
\begin{align}\label{bound_lastTerm}
-&\sum_{e \in \mathcal{E}_0^B} \int_e \boldsymbol{b}\cdot\llbracket r(w)\rrbracket \{\!\{ w\}\!\} \dif s \nonumber \\
&\geq -\frac{\sqrt{6} C_T \norm{\boldsymbol{b}}_{L^\infty(\Omega_h)}}{2} \left(\!\left(\!\epsilon_7\!+\!\frac{C_*^2}{\epsilon_7}\!\right) h^2 \abs{w}^2_{H^1(\mathcal{T}_h)} + \epsilon_7 \norm{w}^2_{L^2(\Omega_h)}\!\right).
\end{align}

\end{itemize}

To organise the terms involving $\norm{w}_{L^2(\Omega_h)}$, we first consider those containing 
the function $c$ in \eqref{Eq1_v2}, namely the third term and the contribution from bounding 
the tenth term (see \eqref{upperBound_w_r_Eb}). Next, we account for terms containing the 
function $\boldsymbol{b}$, which include the last term in \eqref{int_parts_b_gradw}, the first term 
arising from the bound of the ninth term (see \eqref{upperBound_bgradw_r_Eb}), and the last term 
from bounding the fifteenth term (see \eqref{bound_lastTerm}).  

Taking into account \eqref{cx_bx_cond}, we obtain
\begin{align}\label{bound_cx}
&\sum_{k=1}^K (c w, w)_{L^2(T^k)} 
- \frac{\Tilde{C}_1}{2} h^2 \epsilon_3 \norm{c w}^2_{L^2(\Omega_h)}
+ \sum_{k=1}^K \int_{T^k} \frac{w^2}{2} \nabla \cdot \boldsymbol{b} \, d\boldsymbol{x} \nonumber \\
&\quad - \frac{\sqrt{2}\norm{\boldsymbol{b}}_{L^{\infty}(\Omega_h)}}{2} 
\Tilde{C}_2 \epsilon_2 h \norm{w}^2_{L^2(\Omega_h)}
- \frac{\sqrt{6}\norm{\boldsymbol{b}}_{L^{\infty}(\Omega_h)}}{2} C_T \epsilon_7 
\norm{w}^2_{L^2(\Omega_h)} \nonumber \\
&\geq - \widehat{C}_1 \norm{w}^2_{L^2(\Omega_h)}, 
\end{align}
where
\begin{equation}\label{Cl}
\widehat{C}_1 = 
 \frac{\Tilde{C}_1}{2} h^2 \epsilon_3 \norm{c}^2_{L^{\infty}(\Omega_h)}
 + \frac{\sqrt{2}\norm{\boldsymbol{b}}_{L^{\infty}(\Omega_h)}}{2} \Tilde{C}_2 \epsilon_2 h
+ \frac{\sqrt{6}\norm{\boldsymbol{b}}_{L^{\infty}(\Omega_h)}}{2} C_T \epsilon_7.
\end{equation}

Using \eqref{DG_norm_simpl}, we may write
\begin{equation}\label{desig_normDG_1}
\opnorm{w}^2 \leq C^2_{aux} \Big( \norm{w}^2_{L^2(\Omega_h)} 
+ \abs{w}^2_{H^1(\mathcal{T}_h)} 
+ \abs{w}^2_{\ast} 
+ \abs{w}^2_b \Big), \quad C^2_{aux} = 1 + C^2.
\end{equation}

Applying a broken Poincaré–Friedrichs inequality valid for $w \in H^1(\mathcal{T}_h)$ 
(see \cite{Arnold_intPen}, Lemma 2.1), we further obtain
\begin{align}\label{desig_normDG_2}
\opnorm{w}^2 
&\leq C^2_{aux} \Big( C_P \big( \abs{w}^2_{H^1(\mathcal{T}_h)} + \abs{w}^2_{\ast} \big)
+ \abs{w}^2_{H^1(\mathcal{T}_h)} + \abs{w}^2_{\ast} + \abs{w}^2_b \Big) \nonumber \\
&\leq C^2_{aux} (1 + C_P) \Big( \abs{w}^2_{H^1(\mathcal{T}_h)} + \abs{w}^2_{\ast} + \abs{w}^2_b \Big).
\end{align}

Combining the bounds for each term in \eqref{Eq1_v2}, namely 
\eqref{upperBound_gradw}--\eqref{upperBound_wb}, 
\eqref{secondTerm_1}, \eqref{upperBound_w_graw_E}, 
\eqref{upperBound_gradw_grar_Eb}--\eqref{upperBound_w_r_Eb}, 
\eqref{upperBound_rw_graw_Eb}, \eqref{bound_12}--\eqref{bound_cx}, we may write
\begin{equation}\label{infsupa}
B^{DG}_h(w,v) \geq C_{\alpha} \opnorm{w}^2,
\end{equation}
where
\[
C_{\alpha} = \frac{\min\{ \widehat{C}_2 - \widehat{C}_1 C_P, \; \widehat{C}_3 - \widehat{C}_1 C_P, \; \widehat{C}_4 \}}{C^2_{aux} (1 + C_P)}.
\]

The constants are defined as follows: $\widehat{C}_1$ is given in \eqref{Cl}, 
\begin{align*}
\widehat{C}_2 &= \big(1 - \sqrt{3} C_T \sqrt{1+C^2} \, \epsilon_1 \big) \\
& \quad - h \Big( \Tilde{C}_2 + \frac{\norm{\boldsymbol{b}}_{L^{\infty}(\Omega_h)} C^2_J}{2} 
+ \frac{\sqrt{2} \Tilde{C}_2 \norm{\boldsymbol{b}}_{L^{\infty}(\Omega_h)}}{2 \epsilon_2} 
+ \sqrt{3} C_T \sqrt{1 + C^2} C_* \Big) \\
&\quad - h^2 \left( \frac{\Tilde{C}_1}{2 \epsilon_3} + \frac{3}{2 \epsilon_6} C^2_T (1+C^2) \Tilde{C}^2_2
+ \eta \epsilon_4 \frac{C^2_*}{2} 
+ \frac{C_*^2}{4 \epsilon_5} \norm{\boldsymbol{b}}_{L^{\infty}(\Omega_h)} \right.  \\
&\qquad \left.\qquad + \frac{\sqrt{6} \norm{\boldsymbol{b}}_{L^{\infty}(\Omega_h)} C_T}{2} \left(\frac{C_*^2}{\epsilon_7} + \epsilon_7 \right) \right)
\end{align*}
\[
\widehat{C}_3 = \eta\left(1-\frac{1}{2 \epsilon_4}\right) - \frac{\sqrt{3} C_T \sqrt{1+C^2}}{\epsilon_1} - \frac{\epsilon_6}{2}, \quad 
\widehat{C}_4 = 1 - \frac{\epsilon_5}{2}.
\]

To conclude the first inequality in \eqref{goal_proof} from \eqref{infsupa}, we must guarantee positivity of these constants. We impose the following conditions: (i)  for $\widehat{C}_2 - \widehat{C}_1 C_P > 0$, choose 
    $\epsilon_7 < 1 / (\sqrt{6} \norm{\boldsymbol{b}}_{L^\infty(\Omega_h)} C_T C_P)$ 
    (if $\norm{\boldsymbol{b}}_{L^\infty(\Omega_h)} > 0$), $\epsilon_1 < 1/(2 \sqrt{3} C_T \sqrt{1+C^2})$, 
    and $h < h_1$, where $h_1$ is the positive root of $\widehat{C}_2 - \widehat{C}_1 C_P = 0$; (ii) for $\widehat{C}_3 - \widehat{C}_1 C_P > 0$, take $\epsilon_4 > 1/2$, 
\begin{align}
    \eta & >\!\eta_0 =\left(\!\frac{1}{1-\frac{1}{2\epsilon_4}}\!\right) \left(\!\frac{\epsilon_6}{2}\!+\!\frac{ \sqrt{3}C_T\sqrt{1\!+\!C^2}}{\epsilon_1}+\! C_P\frac{\sqrt{6}\norm{\boldsymbol{b}}_{L^{\infty}(\Omega_h)} {C}_T \epsilon_7}{2}\!\right) \label{eta0}
\end{align}
    and $h < h_2$, where $h_2$ is the positive root of $\widehat{C}_3 - \widehat{C}_1 C_P = 0$; (iii) for $\widehat{C}_4 > 0$, take $\epsilon_5 < 2$.
Note that, in particular, $\widehat{C}_2 - \widehat{C}_1 C_P$ and $\widehat{C}_3 - \widehat{C}_1 C_P$ admit positive lower bounds on $(0,h_1)$ and $(0,h_2)$, respectively, that are independent of $h$. Therefore, there exist constants
$\delta_1,\delta_2>0$, independent of $h$, such that
$$
\widehat C_2-\widehat C_1C_P \ge \delta_1,
\qquad
\widehat C_3-\widehat C_1C_P \ge \delta_2,
\qquad \forall\,0< h < h_0 = \min\{h_1, h_2\}.
$$
Consequently,
\[
C_\alpha=
\frac{\min\{\widehat C_2-\widehat C_1C_P,\;\widehat C_3-\widehat C_1C_P,\;\widehat C_4\}}
{C^2_{\mathrm{aux}}(1+C_P)}
\ge
\frac{\min\{\delta_1,\delta_2,\widehat C_4\}}
{C^2_{\mathrm{aux}}(1+C_P)},
\]
where the right-hand side is a positive constant independent of $h$.
Then, for $h$ sufficiently small, $h < h_0$, the bound \eqref{infsupa} holds with $C_\alpha > 0$ and so we conclude the first inequality in \eqref{goal_proof}.

\medskip
To conclude the proof of the inf-sup condition, it remains to prove the second inequality in \eqref{goal_proof}. Using \eqref{norm_Residuo}, we estimate
\begin{align*}
\norm{v}_{L^2(T^k)} &= \|v-w+w\|_{L^2(T^k)}\leq  \|r^k(w)\|_{L^2(T^k)}  + \|w\|_{L^2(T^k)} \\ 
&\leq \Tilde{C}_1 h^2 \norm{\nabla w}_{L^2(T^k)} +\norm{w}_{L^2(T^k)} 
\leq \sqrt{2} \, (1 + \Tilde{C}_1 h^2) \norm{w}_{H^1(T^k)}.    
\end{align*}

Similarly, from \eqref{norm_gradResiduo} we have
\[
\norm{\nabla v}_{L^2(T^k)} \leq (1 + \Tilde{C}_2 h) \norm{\nabla w}_{L^2(T^k)}.
\]
For the jump-related seminorms, we obtain
\[
\abs{v}^2_{\ast} \leq 2 C^2_\ast h^2 \abs{w}^2_{H^1(\mathcal{T}_h)} + 2 \abs{w}^2_{\ast}, \quad
\abs{v}^2_b \leq 2 \norm{\boldsymbol{b}}_{L^\infty(\Omega_h)} C^2_\ast h^2 \abs{w}^2_{H^1(\mathcal{T}_h)} + 2 \abs{w}^2_b.
\]
Combining the above, the DG norm of $v$ satisfies
\[
\opnorm{v}^2 \leq \widehat{C}^2_{aux} \Big( \norm{v}^2_{L^2(\Omega_h)} + \abs{v}^2_{H^1(\mathcal{T}_h)} + \abs{v}^2_{\ast} + \abs{v}^2_b \Big) \leq C_v^2 \opnorm{w}^2,
\]
with
\[
C_v^2 = \widehat{C}^2_{aux} \max \Big\{ 2, \; 2(1 + \Tilde{C}_1 h_0^2)^2 + (1 + \Tilde{C}_2 h_0)^2 + 2 C^2_\ast h_0^2 + 2 \norm{\boldsymbol{b}}_{L^\infty(\Omega_h)} C^2_\ast h_0^2 \Big\}.
\]
which concludes the proof of \eqref{DesigAlpha_v2}.

Next, we observe that since we are in a finite-dimensional setting with trial and test spaces having the same dimension, \eqref{bnb2} is a consequence of \eqref{DesigAlpha_v2} (see \cite{Ern_Guermond}, Proposition 2.21) and the proof
is complete.
\end{proof}

Thus, thanks to BNB Theorem, provided $h$ sufficiently small and $\eta$ sufficiently large, the fact that the inf-sup condition \eqref{DesigAlpha_v2} holds implies that \eqref{formfraca_brokenGrad}--\eqref{bilinear_form} is uniquely solvable.

\begin{remark}
In the inf-sup condition of Theorem~\ref{ProposicaoDesAlpha_v2}, the parameter $\eta$ must satisfy $\eta > \eta_0$, where $\eta_0$ is defined in \eqref{eta0}. Choosing
\[
\epsilon_1=\frac{1}{3\sqrt{3}\,C_T\sqrt{1+C^2}}, 
\quad 
\epsilon_4=\frac34,
\quad 
\epsilon_6=1,
\quad 
\epsilon_7=\frac{1}{2\sqrt{6}\,\|\boldsymbol{b}\|_{L^\infty(\Omega_h)} C_T C_P},
\]
which are admissible according to the constraints imposed on these parameters, we get
\[
\eta_0=\frac94 + 27\,C_T^2(1+C^2).
\]
Hence, one may for instance choose
$\eta=\frac52 + 27\,C_T^2(1+C^2)$.
\end{remark}

\section{Error Estimates}\label{errorEstimates}

In this section, we derive error estimates for the DG--ROD method on both convex and non-convex domains. We first examine whether Galerkin orthogonality holds; if not, we estimate the resulting residual. The error analysis relies on the inf-sup condition \eqref{DesigAlpha_v2} and classical interpolation results. Let $I_h(w) \in \mathcal{W}_h$ denote the $\mathcal{P}_N$-interpolant of $w$ at the nodes of $\mathcal{W}_h$. For elements $k \notin I^B$, $I_h(w)$ is the standard interpolant at the mesh nodes $\boldsymbol{x}_i^k$ (left panel, Figure~\ref{NodalSet}). For boundary elements $k \in I^B$, $I_h(w)$ interpolates $w$ at the $m_N+2$ nodes of $T^k$ outside $e^{kB}$ and the $N-1$ nodes on $\partial \Omega$ corresponding to the interior of $e^{kB}$ (right panel, Figure~\ref{NodalSet}).

We follow the approach for error analysis outlined in \cite{ruas2019OptimalLA}, yet its extension to the discontinuous Galerkin setting introduces significant technical novelties. As discussed in the proof of the inf-sup condition (BNB1), the DG formulation introduces an additional residual term $r(w)$ arising from the non-standard treatment of boundary conditions. This requires a more refined treatment and introduces technical challenges not present in the classical framework of \cite{ruas2019OptimalLA}. 
A fundamental distinction lies in the nature of the approximation: unlike the continuous finite element methods in \cite{ruas2019OptimalLA}, the DG interpolant $I_h(u)$ is inherently discontinuous across element interfaces. Consequently, the DG-norm of the interpolation error must explicitly incorporate jump contributions, reflecting the discontinuity of both the numerical solution and its interpolant. Furthermore, the analysis requires a rigorous element-wise integration by parts, which generates additional interface terms involving jump and average operators. These features represent a substantial departure from the analysis in \cite{ruas2019OptimalLA} and require a dedicated treatment to ensure the convergence of the DG--ROD method.


\subsection{Convex case}

Let $\Omega$ be a convex domain, $u \in H^2(\Omega)$ the exact solution of \eqref{problem_E}--\eqref{problem_B}, and $v \in \mathcal{V}_h$.  Since $\Omega_h \subset \Omega$ and
$
\llbracket u \rrbracket = 0,$ $\llbracket {}\boldsymbol{ \nabla u} \rrbracket = 0,
$
(see \cite{Ern_Pietro}, Lemmas 1.23--1.24), using the jump/average formula \eqref{formula_jumpsAverage}, we have
\begin{align*}
B^{DG}_h(u,v) &= (\nabla_h u, \nabla_h v)_{L^2(\Omega_h)} - (\boldsymbol{b} u, \nabla_h v)_{L^2(\Omega_h)} + (c u, v)_{L^2(\Omega_h)} \\
&\quad - \int_{\Gamma_0} \llbracket u \rrbracket \cdot \{\!\{\nabla_h v\}\!\} \dif s
- \int_{\Gamma_0} \llbracket v \rrbracket \cdot \{\!\{\nabla_h u\}\!\} \dif s
+ \int_{\Gamma} \frac{\eta}{h_e} \llbracket v \rrbracket \cdot \llbracket u \rrbracket \dif s \\
&\quad + \int_{\Gamma_0} \boldsymbol{b} \cdot \llbracket v \rrbracket \{\!\{ u \}\!\} \dif s
+ \int_{\Gamma} \frac{|\boldsymbol{b} \cdot \boldsymbol{n}_e|}{2} \llbracket v \rrbracket \cdot \llbracket u \rrbracket \dif s \\
&= (\nabla_h u, \nabla_h v)_{L^2(\Omega_h)} - (\boldsymbol{b} u, \nabla_h v)_{L^2(\Omega_h)} + (c u, v)_{L^2(\Omega_h)} \\
&\quad - \int_{\Gamma_0} \llbracket v \rrbracket \cdot \{\!\{\nabla_h u\}\!\} \dif s
+ \int_{\Gamma_0} \boldsymbol{b} \cdot \llbracket v \rrbracket \{\!\{ u \}\!\} \dif s.
\end{align*}
Applying elementwise integration by parts and using the fact that $\llbracket u \rrbracket = \llbracket \nabla u \rrbracket = 0$, we get
\begin{align*}
B^{DG}_h(u,v) &= \sum_{k=1}^K \big( -(\Delta u, v)_{L^2(T^k)} + (\nabla \cdot (\boldsymbol{b} u), v)_{L^2(T^k)} + (c u, v)_{L^2(T^k)} \big) \\
&= (-\Delta u + \nabla \cdot (\boldsymbol{b} u) + c u, v)_{L^2(\Omega_h)} = (f, v)_{L^2(\Omega_h)}.
\end{align*}
Hence, the DG--ROD method is consistent, and the Galerkin orthogonality holds
\begin{equation}\label{GalerkinOrthog}
B^{DG}_h(u - u_h, v) = 0, \qquad \forall v \in \mathcal{V}_h.
\end{equation}

We now derive rigorous error estimates for the DG--ROD method in the case of convex domains. We first establish an estimate in the DG norm (also called energy norm), which captures both the elementwise $H^1$-seminorm and the contributions from jumps across element interfaces. Then, leveraging duality arguments, we derive an $L^2$-norm estimate exhibiting optimal convergence rates.

\begin{theorem}[DG-norm estimate]\label{error_DG}
Let $\Omega \subset \mathbb{R}^2$ be a convex domain, and $u \in H^{N+1}(\Omega)$ the solution of the boundary value problem \eqref{problem_E}--\eqref{problem_B}. Then, for $h<h_0$, where $h_0$ is the threshold defined in the proof of Theorem 1, there exists a constant $\mathcal{C} > 0$, independent of $h$ and $u$, such that the DG--ROD solution $u_h \in \mathcal{V}_h$ satisfies
\begin{equation}\label{T41}
\opnorm{u - u_h} \leq \mathcal{C} h^N |u|_{H^{N+1}(\Omega)},
\end{equation}
where $\opnorm{\cdot}$ denotes the DG norm defined in \eqref{DG_norm}.
\end{theorem}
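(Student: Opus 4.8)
The plan is to combine the three structural results already established --- the inf-sup condition \eqref{DesigAlpha_v2}, the boundedness \eqref{boundedness}, and the Galerkin orthogonality \eqref{GalerkinOrthog} --- in a C\'ea-type argument, thereby reducing the whole estimate to a single interpolation bound for $u - I_h u$ measured in the DG norm.

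First I would split the error through the interpolant. Since $u$ vanishes on $\partial\Omega$ and the projected points $P_r^k$ together with the vertices of $\partial\Omega_h$ all lie on $\partial\Omega$, the interpolant $I_h u$ vanishes at these nodes and hence $I_h u \in \mathcal{W}_h$; consequently $e_h := u_h - I_h u \in \mathcal{W}_h$. Applying \eqref{DesigAlpha_v2} to $e_h$ gives
\[
\alpha \opnorm{e_h} \leq \sup_{v \in \mathcal{V}_h \setminus \{0\}} \frac{B^{DG}_h(e_h, v)}{\opnorm{v}}.
\]
For any $v \in \mathcal{V}_h$, writing $e_h = (u_h - u) + (u - I_h u)$ and invoking \eqref{GalerkinOrthog} to eliminate the term $B^{DG}_h(u_h - u, v)$ leaves $B^{DG}_h(e_h, v) = B^{DG}_h(u - I_h u, v)$, which \eqref{boundedness} bounds by $C_b \opnorm{u - I_h u}\,\opnorm{v}$. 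Taking the supremum yields
\[
\alpha \opnorm{e_h} \leq C_b \opnorm{u - I_h u},
\]
and a triangle inequality gives $\opnorm{u - u_h} \leq (1 + C_b/\alpha)\,\opnorm{u - I_h u}$. At this point the entire estimate rests on the interpolation error.

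The remaining task is to prove $\opnorm{u - I_h u} \leq C\, h^N |u|_{H^{N+1}(\Omega)}$, which I would do by treating separately the three groups of terms in \eqref{DG_norm}. For the volume contributions $\norm{\cdot}^2_{H^1(T^k)} + h_k^2 \abs{\cdot}^2_{H^2(T^k)}$, standard polynomial approximation theory on each $T^k$ supplies $\abs{u - I_h u}_{H^s(T^k)} \leq C h_k^{N+1-s} \abs{u}_{H^{N+1}(T^k)}$ for $s = 0,1,2$, which collectively produce an $\mathcal{O}(h^{2N})$ contribution. For the jump seminorms $\abs{\cdot}^2_{\ast}$ and $\abs{\cdot}^2_{b}$, I would use that $u \in H^{N+1}(\Omega) \subset H^2(\Omega)$ is single-valued across interior edges, so $\llbracket u - I_h u \rrbracket = -\llbracket I_h u \rrbracket$; bounding the edge traces from each adjacent element by the trace inequality \eqref{trace_2.4} and inserting the volume estimates again yields $\mathcal{O}(h^{2N})$, using the mesh assumptions \eqref{shape_regularity}--\eqref{meshIneq_hk_he} and the boundedness of $\boldsymbol{b}$ for the weighted seminorm. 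Summing over all elements and edges gives the interpolation bound, and combining it with the previous paragraph completes the proof with $\mathcal{C} = (1 + C_b/\alpha)\,C$.

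I expect the main obstacle to lie entirely in the interpolation estimate for the boundary elements $k \in I^B$. There the interpolation nodes are not the standard Lagrange points on $e^{kB}$ but the projected points $P_r^k$ on the curved boundary $\partial\Omega$, so the classical Bramble--Hilbert argument does not apply verbatim: one must control the stability of this perturbed interpolation operator and verify that the node displacement --- which is $\mathcal{O}(h_k^2)$ because $\partial\Omega$ is smooth and $e^{kB}$ is its chord --- does not degrade the approximation order. This is precisely the geometric heart of the ROD construction, and I would handle it by invoking the interpolation results established for the finite element setting in \cite{ruas2019OptimalLA} (Proposition 2.1 and Lemma 3.1, already used in the proof of Theorem \ref{ProposicaoDesAlpha_v2}), transferred to the broken space $\mathcal{W}_h$ through the extension property of $\mathcal{W}_h$ onto $\Delta_k$.
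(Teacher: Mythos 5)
Your proposal is correct and follows essentially the same route as the paper: a C\'ea-type argument combining the triangle inequality through $I_h(u)$, the inf-sup condition of Theorem \ref{ProposicaoDesAlpha_v2}, Galerkin orthogonality \eqref{GalerkinOrthog} and boundedness \eqref{boundedness} to obtain $\opnorm{u-u_h}\leq(1+C_b/\alpha)\opnorm{u-I_h(u)}$, followed by the interpolation estimate via the trace inequality \eqref{trace_2.4} and the approximation results of \cite{ruas2019OptimalLA}. Your explicit verification that $I_h(u)\in\mathcal{W}_h$ and your discussion of the perturbed boundary nodes $P_r^k$ are points the paper leaves implicit, but they do not change the argument.
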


\begin{proof}
The proof proceeds by adapting the arguments presented in Theorem 4.3 of \cite{ruas2019OptimalLA}. Let $I_h(u) \in \mathcal{W}_h$ denote the $\mathcal{P}_N$-interpolant of $u$ at the nodes of the DG space. By the triangle inequality:
\[
\opnorm{u - u_h} \leq \opnorm{u - I_h(u)} + \opnorm{u_h - I_h(u)}.
\]
Using the inf-sup condition (Theorem \ref{ProposicaoDesAlpha_v2}) for the DG bilinear form $B_h^{DG}$, we have
\begin{equation}\label{aux}
\opnorm{u_h - I_h(u)} \leq \frac{1}{\alpha} \sup_{v \in \mathcal{V}_h \setminus \{0\}} \frac{B^{DG}_h(u_h - I_h(u),v)}{\opnorm{v}}.
\end{equation}

Adding and subtracting $u$ in the numerator, using Galerkin orthogonality \eqref{GalerkinOrthog}, and boundedness of $B_h^{DG}$ (see \eqref{boundedness}), we obtain
\[
\opnorm{u - u_h} \leq \left(1 + \frac{C_b}{\alpha}\right) \opnorm{u - I_h(u)}.
\]

Unlike standard continuous finite elements, the DG interpolant $I_h(u)$ is discontinuous across element boundaries. Therefore, the DG norm of the interpolation error includes jump terms:
\begin{align}\label{normDG_interp}
\opnorm{&u\!-\!I_h(u)}^2 = \sum_{k=1}^K \left(\!\norm{u\!-\!I_h(u)}^2_{L^2(T^k)}\!+\!\abs{u\!-\!I_h(u)}^2_{H^1(T^k)}\!+\!h_k^2 \abs{u\!-\!I_h(u)}^2_{H^2(T^k)}\!\right) \nonumber\\
&\quad + \sum_{e \in \mathcal{E}_h} h_e^{-1} \norm{\llbracket u\!-\!I_h(u) \rrbracket}^2_{L^2(e)}\!+\!\sum_{e \in \mathcal{E}_h} \frac{1}{2} \norm{ |\boldsymbol{b}\!\cdot\!\boldsymbol{n}_e|^{1/2} \llbracket u\!-\!I_h(u) \rrbracket }^2_{L^2(e)}.
\end{align}

Using the trace inequality \eqref{trace_2.4} and classical polynomial approximation results (Lemma 4.1 in \cite{ruas2019OptimalLA}), we conclude
\[
\opnorm{u - I_h(u)} \leq \mathcal{C}_a h^N |u|_{H^{N+1}(\Omega)}.
\]

Finally, combining the previous steps gives the desired estimate \eqref{T41} with
\(
\mathcal{C} = \mathcal{C}_a \left(1 + \frac{C_b}{\alpha}\right).
\)
\end{proof}

\begin{theorem}[$L^2$-norm estimate]\label{error_L2}
Let $\Omega$ be a convex domain with piecewise $C^{N+1}$ boundary $\partial \Omega$ and $u$ the solution of \eqref{problem_E}--\eqref{problem_B} with $u \in H^{N+1+r}(\Omega)$, $r = 1/2 + \epsilon$, $\epsilon>0$. Then, for $h<h_0$, where $h_0$ is the threshold defined in the proof of Theorem 1, 
and $N>1$, the DG--ROD solution $u_h$ satisfies
\begin{equation}\label{T42}
\norm{u - u_h}_{L^2(\Omega_h)} \leq \mathcal{C}_0 h^{N+1} \norm{u}_{H^{N+1+r}(\Omega)},
\end{equation}
where $\mathcal{C}_0$ is independent of $h$ and $u$.
\end{theorem}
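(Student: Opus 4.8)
The plan is to run an Aubin--Nitsche duality argument, gaining one power of $h$ over the energy estimate of Theorem~\ref{error_DG}. Write $e = u - u_h$, extended by zero to $\Omega \setminus \Omega_h$, and introduce the adjoint problem: find $\psi \in H^1_0(\Omega)$ with
\[
-\Delta\psi - \boldsymbol{b}\cdot\nabla\psi + c\psi = e \quad \text{in } \Omega, \qquad \psi = 0 \quad \text{on } \partial\Omega.
\]
The sign condition \eqref{cx_bx_cond} guarantees well-posedness of this adjoint, and since $\Omega$ is convex with smooth boundary, elliptic regularity yields $\psi \in H^2(\Omega)$ with $\norm{\psi}_{H^2(\Omega)} \leq C_{\mathrm{reg}}\norm{e}_{L^2(\Omega)}$. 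The objective is the representation $\norm{e}^2_{L^2(\Omega_h)} = B^{DG}_h(e,\psi) + \mathcal{R}_h$, where $\mathcal{R}_h$ collects the residual produced by the geometric mismatch between $\Omega$ and $\Omega_h$.

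To obtain this representation I would test the adjoint equation elementwise against $e$ and integrate by parts, exploiting that $\psi \in H^2(\Omega)$ forces $\llbracket\psi\rrbracket = \llbracket\nabla\psi\rrbracket = 0$ on every interior edge, so that the SIP consistency terms and the upwind/penalty jump contributions involving $\psi$ vanish on $\Gamma_0$; this is the adjoint consistency of the scheme, mirroring the computation already carried out for $B^{DG}_h(u,v)$ in the convex case. The only surviving extra terms are supported on $\partial\Omega_h$ and in the thin regions $\Delta_k$, because $\psi$ vanishes on $\partial\Omega$ rather than on $\partial\Omega_h$; these constitute $\mathcal{R}_h$.

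Next I would invoke Galerkin orthogonality \eqref{GalerkinOrthog}: for any $\psi_h \in \mathcal{V}_h$ we have $B^{DG}_h(e,\psi_h) = 0$, whence
\[
\norm{e}^2_{L^2(\Omega_h)} = B^{DG}_h(e, \psi - \psi_h) + \mathcal{R}_h.
\]
Choosing $\psi_h$ to be the $\mathcal{P}_N$-interpolant of $\psi$ corrected to vanish on $\partial\Omega_h$, the boundedness estimate \eqref{boundedness} gives $\abs{B^{DG}_h(e, \psi - \psi_h)} \leq C_b\opnorm{e}\,\opnorm{\psi - \psi_h}$. A first-order interpolation bound in the DG norm, built from the trace inequality \eqref{trace_2.4} and the $H^2$-regularity of $\psi$, yields $\opnorm{\psi - \psi_h} \leq C h\norm{\psi}_{H^2(\Omega)} \leq C\, C_{\mathrm{reg}}\, h\norm{e}_{L^2}$, the boundary correction costing only a lower-order term since $\partial\Omega_h$ lies within $O(h^2)$ of $\partial\Omega$, where $\psi$ is zero. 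Combining with the energy estimate $\opnorm{e} \leq \mathcal{C}h^N\abs{u}_{H^{N+1}(\Omega)}$ of Theorem~\ref{error_DG} produces the leading contribution $C h^{N+1}\abs{u}_{H^{N+1}(\Omega)}\norm{e}_{L^2}$.

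The main obstacle is controlling the residual $\mathcal{R}_h$. Its terms live on $\partial\Omega_h$ and in the gaps $\Delta_k$ and couple the dual solution to the normal traces of $e$, equivalently of $u$, near the boundary. To bound them by $C h^{N+1}\norm{u}_{H^{N+1+r}(\Omega)}\norm{e}_{L^2}$ I would Taylor-expand $\psi$ across the $O(h^2)$ gap between $\partial\Omega$ and $\partial\Omega_h$ (harvesting the geometric factor) while controlling the boundary traces of the derivatives of $u$ by a trace/Sobolev embedding; this is exactly where the extra fractional regularity $r = 1/2 + \epsilon$ is consumed, since $H^{N+1+r}(\Omega)$ is what makes the relevant $N$-th order normal traces of $u$ on the curved boundary meaningful and $L^2$-bounded. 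Assembling $\mathcal{R}_h$ with the orthogonality term, dividing through by $\norm{e}_{L^2(\Omega_h)}$, and absorbing $C_{\mathrm{reg}}$ then delivers \eqref{T42} with $\mathcal{C}_0$ independent of $h$ and $u$; the hypothesis $N>1$ enters in ensuring the ROD reconstruction in the boundary elements retains full approximation order when estimating $\mathcal{R}_h$.
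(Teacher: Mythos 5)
Your proposal follows essentially the same route as the paper: an Aubin--Nitsche duality argument with the adjoint advection--diffusion--reaction problem, elliptic regularity on the convex domain, Galerkin orthogonality to replace the dual solution by its interpolation error (the paper uses the continuous piecewise linear interpolant $\Pi_h(z)$, which suffices since $z$ is only $H^2$), the boundedness estimate combined with Theorem~\ref{error_DG} for the leading term, and a collection of residual terms supported on $\partial\Omega_h$ and in the slivers $\Delta_k$ that are bounded using the $O(h^2)$ geometric gap, the vanishing of the dual solution on $\partial\Omega$, and the extra fractional regularity $r=1/2+\epsilon$. The structure and all key ingredients match the paper's proof.
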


\begin{proof}
The proof follows the duality argument introduced in \cite{ruas2019OptimalLA}. Recall that every function in $\mathcal{W}_h$ is defined in $\overline{\Omega}\setminus \Omega_h$. Let $z\in H^1_0(\Omega)$ denote the solution of the adjoint problem
 \begin{align*}
-\Delta z\left({\boldsymbol x}\right) - \boldsymbol{b}(\boldsymbol{x}) \cdot \nabla z(\boldsymbol{x})+ c\left({\boldsymbol x}\right) z\left({\boldsymbol x}\right)  &= u\left({\boldsymbol x}\right)  - u_h \left({\boldsymbol x}\right), \quad {\boldsymbol x} \in \Omega, \\
   z \left({\boldsymbol x}\right)&=0, \quad {\boldsymbol x} \in \partial \Omega. 
\end{align*} 
Standard elliptic regularity \cite{Evans} implies $z\in H^2(\Omega)$ and there exists a
constant $C(\Omega)>0$ such that
\begin{equation}
 \|z\|_{H^2(\Omega)} \le C(\Omega)\|u-u_h\|_{L^2(\Omega)} .
\end{equation}
Consequently,
\begin{equation}
 \|u-u_h\|_{L^2(\Omega)}
 \le C(\Omega)\,
 \frac{(u-u_h,-\Delta z-\boldsymbol{b}\cdot\nabla z+cz)_{L^2(\Omega)}}
      {\|z\|_{H^2(\Omega)}} .
\end{equation}

Considering $\Delta h = \Omega \setminus \Omega_h$ and using $z=0$ on $\partial \Omega$, we have
\begin{align*}
    &(u\!-\!u_h, -\Delta z\!-\!\boldsymbol{b}\!\cdot\!\nabla z\!+\!cz)_{L^2(\Omega)}\!=\!(u\!-\!u_h,-\Delta z-\!\boldsymbol{b}\!\cdot\!\nabla z\!+\!cz)_{L^2(\Omega_h)}   \\
    &\quad+  (u - u_h, -\Delta z -\boldsymbol{b}\cdot \nabla z + cz)_{L^2(\Delta h)} \\
    &= -\!\sum_{k=1}^K\!\int_{\partial T^k}\!(\!u\!-\!u_h\!)\frac{\partial z}{\partial n_h}\!\diff s\!+\!(\nabla_h (\!u\!-\!u_h\!), \nabla z)_{L^2(\Omega_h)}\!-\!(\boldsymbol{b}(\!u\!-\!u_h\!), \nabla z)_{L^2(\Omega_h)} \\
     &\quad +\!(u\!-\!u_h, cz)_{L^2(\Omega_h)}\!-\!\int_{\partial \Omega_h} (\!u\!-\!u_h\!)\frac{\partial z}{\partial n_h} \diff s\!+\!b_{1h}(\!u\!-\!u_h,z\!)\!+\!B^{DG}_{\Delta h}(u\!-\!u_h,z)
\end{align*}
with
\begin{align}
    B^{DG}_{\Delta h}(u\!-\!u_h, z) &= \int_{\Delta h} \nabla_h (u\!-\!u_h) \cdot \nabla z\!-\!\boldsymbol{b} (u\!-\!u_h) \cdot \nabla z\!+\!(u\!-\!u_h) c z \dif\boldsymbol{x}, \label{a_deltah_expressao} \\
    b_{1h}(u-u_h, z) &= - \int_{\partial \Omega} (u-u_h) \frac{\partial z}{\partial n} \dif s. \label{equacao_b1h}
\end{align}
Since $\llbracket z \rrbracket = 0$ and $\llbracket \nabla z \rrbracket = 0$, we can rewrite
\begin{align}\label{numerador_1}
     &(u-u_h, -\Delta z-\boldsymbol{b}\cdot \nabla z+cz)_{L^2(\Omega)} \nonumber \\
     &=\!B^{DG}_h(u\!-\!u_h, z)+B^{DG}_{\Delta h}(u\!-\!u_h,z)\!+\!b_{1h}(u\!-\!u_h,z)\!-\!2\int_{\partial \Omega_h} (u\!-\!u_h)\frac{\partial z}{\partial n_h} \diff s \nonumber \\
     &- \int_{\partial \Omega_h} \frac{\eta}{h_e}  \llbracket u-u_h\rrbracket \cdot  \llbracket z\rrbracket \dif s-\int_{\partial \Omega_h} \frac{\abs{\boldsymbol{b}\cdot \boldsymbol{n}_e}}{2}  \llbracket u-u_h\rrbracket \cdot  \llbracket z\rrbracket \dif s.
\end{align} 

Let $\Pi_h(z)$ denote the continuous piecewise linear interpolant of $z$ over the mesh vertices and set $z_h = \Pi_h(z) \in \mathcal{V}_h$ in $\Omega_h$. Then, by definition,
\begin{equation}\label{estimativa_erro2}
    B^{DG}_h(u, z_h) = (f, z_h)_{L^2(\Omega_h)} = B^{DG}_h(u_h, z_h).
\end{equation}
Following \cite{ruas2019OptimalLA}, define $e_h(z) = z - \Pi_h(z)$ and consider
\begin{align}
    b_{2h}(\!u\!-\!u_h, \Pi_h(z)\!) &=\!\sum_{k \in I^B}\!\int_{\Delta_k} -\Delta(\!u\!-\!u_h\!)\Pi_h(z)\!-\!\boldsymbol{b}(\!u\!-\!u_h\!)\!\cdot\!\nabla\Pi_h(z) \dif \boldsymbol{x} \nonumber \\
    &\quad+ \sum_{k \in I^B}  \int_{\Delta_k} c(u-u_h)  \Pi_h(z) \dif \boldsymbol{x}, \label{equacao_b2h} \\
    b_{3h}(u-u_h,\Pi_h(z)) &= \sum_{k \in I^B} \int_{(T^k \cup \Delta_k)\cap \partial \Omega}  \frac{\partial (u-u_h)}{\partial n}  \Pi_h(z) \dif s \label{equacao_b3h},
\end{align} 
\begin{equation}\label{equacao_b4h}
    b_{4h}(u-u_h, e_h(z)) = B^{DG}_{\Delta h}(u-u_h,z - \Pi_h(z)).
\end{equation}
%
Thus,
\begin{equation}\label{estimativa_erro4}
    B^{DG}_{\Delta h}(u\!-\!u_h, z) = b_{2h}(u-u_h, \Pi_h(z))\!+\!b_{3h}(u-u_h, \Pi_h(z))\!+\!b_{4h}(u\!-\!u_h, e_h(z)).
\end{equation}

By Galerkin orthogonality \eqref{GalerkinOrthog},
\begin{equation}\label{estimativa_erro5}
    B^{DG}_h(u-u_h, z) = B^{DG}_h(u-u_h, e_h(z)).
\end{equation}
Defining the remaining boundary terms
\begin{align}
    b_{5h}(u-u_h, z) &= -2 \int_{\partial \Omega_h} (u-u_h) \frac{\partial z}{\partial n_h} \dif s, \label{equacao_b5h_convex} \\
    b_{6h}(u-u_h, z) &= - \int_{\partial \Omega_h} \frac{\eta}{h_e} \llbracket u-u_h \rrbracket \cdot \llbracket z \rrbracket \dif s, \label{equacao_b6h_convex} \\
    b_{7h}(u-u_h, z) &= - \int_{\partial \Omega_h} \frac{|\boldsymbol{b}\cdot \boldsymbol{n}_e|}{2} \llbracket u-u_h \rrbracket \cdot \llbracket z \rrbracket \dif s, \label{equacao_b7h_convex}
\end{align}
and combining \eqref{estimativa_erro4}--\eqref{equacao_b7h_convex} with \eqref{numerador_1}, we obtain
\begin{align}\label{estimativa_erro7}
    \|u\!-\!u_h\|_{L^2(\Omega)}\!&\le\!C(\Omega) \frac{B^{DG}_h(u\!-\!u_h, e_h(z))\!+\!b_{1h}(u\!-\!u_h, z)\!+\!b_{2h}(u\!-\!u_h, \Pi_h(z))}{\|z\|_{H^2(\Omega)}} \nonumber \\
    &\quad +\!C(\Omega)\!\frac{b_{3h}(u\!-\!u_h, \Pi_h(z))\!+\!b_{4h}(u\!-\!u_h, e_h(z))\!+\!b_{5h}(u\!-\!u_h, z)}{\|z\|_{H^2(\Omega)}} \nonumber \\
    &\quad + C(\Omega) \frac{b_{6h}(u-u_h, z) + b_{7h}(u-u_h, z)}{\|z\|_{H^2(\Omega)}}.
\end{align}

Using the boundedness inequality \eqref{boundedness} and applying Theorem \ref{error_DG}, we have
\begin{align}\label{ah_estimate}
    B^{DG}_h(\!u\!-\!u_h, e_h(\!z\!)\!) 
    &\le\!C_b\!\opnorm{u\!-\!u_h} \opnorm{e_h(z)}\!\le\!C_b\mathcal{C} h^N \abs{u}_{H^{N+1}(\Omega)} \opnorm{e_h(z)}.
\end{align}

From Lemma 4.1 in \cite{ruas2019OptimalLA} (with $j=0,1,2$) and since $h<1$, we deduce
\begin{align}\label{ehz_estimation}
    \opnorm{e_h(z)} \le C_{\Omega,z} h \abs{z}_{H^2(\Omega)},
\end{align}
where $C_{\Omega,z}$ is a mesh-independent constant. Substituting \eqref{ehz_estimation} into \eqref{ah_estimate} gives
\begin{align}\label{ah_estimate2}
    B^{DG}_h(u-u_h, e_h(z)) \le C_a h^{N+1} \norm{u}_{H^{N+1+r}(\Omega)} \norm{z}_{H^2(\Omega)},
\end{align}
with $C_a = C_b \mathcal{C} C_{\Omega,z}$.

The estimates for $b_{ih}$, $i=1,2,3,4$, follow similarly to \cite{ruas2019OptimalLA}. For $b_{5h}$, we have
\begin{align*}
    b_{5h}(u-u_h, z) \le 2 \abs{\sum_{e \in \partial \Omega_h} \int_e \llbracket u-u_h \rrbracket \cdot \nabla (z - \Pi_h(z) + \Pi_h(z)) \dif s}.
\end{align*}
Applying the Cauchy–Schwarz inequality,
\begin{align}
    b_{5h}(u-u_h, z) 
    &\le 2 \sum_{e \in \partial \Omega_h} \norm{h_e^{-1/2} \llbracket u-u_h \rrbracket}_{L^2(e)} \norm{h_e^{1/2} \nabla e_h(z)}_{L^2(e)} \label{b5h_aux1} \\
    &\quad + 2 \sum_{e \in \partial \Omega_h} \norm{h_e^{-1/2} \llbracket u-u_h \rrbracket}_{L^2(e)} \norm{h_e^{1/2} \nabla \Pi_h(z)}_{L^2(e)}. \label{b5h_aux2}
\end{align}

Following arguments in Subsection \ref{subsection:boundedness}, Theorem \ref{error_DG}, and Lemma 4.1 in \cite{ruas2019OptimalLA}, we bound \eqref{b5h_aux1} as
\begin{align}\label{b5h_aux3}
    &2 \sum_{e \in \partial \Omega_h} \norm{h_e^{-1/2} \llbracket u-u_h \rrbracket}_{L^2(e)} \norm{h_e^{1/2} \nabla e_h(z)}_{L^2(e)} \nonumber \\
    &\le 2 \opnorm{u-u_h} C_T \sqrt{6} C_\Omega h \abs{z}_{H^2(\Omega)} 
    \le C'_{b5} h^{N+1} \norm{u}_{H^{N+1+r}(\Omega)} \norm{z}_{H^2(\Omega)},
\end{align}
with $C'_{b5} = 2 \sqrt{6} \mathcal{C} C_T C_\Omega$.

For \eqref{b5h_aux2}, using Lemma 3.1 in \cite{ruas2019OptimalLA}, the fact that the distance between the physical and computational boundaries is of order $\mathcal{O}(h^2)$,  and $\norm{\nabla \Pi_h(z)}_{L^2(\Omega)}^2 \le \tilde{C}_\Omega^2 \norm{z}_{H^2(\Omega)}^2$, with $\tilde{C}_\Omega = \sqrt{2 + 2 C_\Omega^2 h_0^2}$, we obtain
\begin{align}\label{b5h_aux4}
    &2 \sum_{e \in \partial \Omega_h} \norm{h_e^{-1/2} \llbracket u-u_h \rrbracket}_{L^2(e)} \norm{h_e^{1/2} \nabla \Pi_h(z)}_{L^2(e)} \nonumber \\
    &\le 2 \mathcal{C} C_t C_{\partial \Omega}^{1/2} C_J h^{N+1} \norm{u}_{H^{N+1+r}(\Omega)} \norm{\nabla \Pi_h(z)}_{L^2(\Omega)}  \nonumber \\
    &\le  C''_{b5} h^{N+1} \norm{u}_{H^{N+1+r}(\Omega)} \norm{z}_{H^2(\Omega)}, \qquad C''_{b5} = 2 \mathcal{C} C_t C_{\partial \Omega}^{1/2} C_J \tilde{C}_\Omega.
\end{align}

Combining \eqref{b5h_aux3} and \eqref{b5h_aux4} yields
\begin{align*}
    b_{5h}(u-u_h, z) \le C_{b5} h^{N+1} \norm{u}_{H^{N+1+r}(\Omega)} \norm{z}_{H^2(\Omega)},\qquad C_{b5} = C'_{b5} + C''_{b5}.
\end{align*}

Similarly, for $b_{6h}$ and $b_{7h}$, using $\Pi_h(z) = 0$ on $\partial \Omega_h$, the Cauchy–Schwarz inequality, and Lemma 4.1 in \cite{ruas2019OptimalLA},
\begin{align}
    b_{6h}(u-u_h, z) &\le C_{b6} h^{N+1} \norm{u}_{H^{N+1+r}(\Omega)} \norm{z}_{H^2(\Omega)}, \qquad C_{b6} = \eta \mathcal{C} C_{\Omega,z},\\
    b_{7h}(u-u_h, z) &\le C_{b7} h^{N+1} \norm{u}_{H^{N+1+r}(\Omega)} \norm{z}_{H^2(\Omega)},\qquad C_{b7} =  \mathcal{C} C_{\Omega,z}.
\end{align}

Hence, all terms $b_{ih}$, $i=1,\dots,7$, satisfy
\begin{align}
    b_{ih}(u-u_h, \cdot) \le C_{bi} h^{N+1} \norm{u}_{H^{N+1+r}(\Omega)} \norm{z}_{H^2(\Omega)}, \quad i=1,\dots,7.
\end{align}

Finally, combining these bounds with \eqref{ah_estimate2} into \eqref{estimativa_erro7} and using $h<1$, we obtain \eqref{T42} with
\[
\mathcal{C}_0 = C(\Omega) \left( C_a + \sum_{i=1}^7 C_{bi} \right).
\]
\end{proof}

\subsection{Non-convex case}

We now consider a non-convex domain $\Omega$. In this setting, the Galerkin orthogonality no longer holds, and thus the residual
\[
R_h(v) = B^{DG}_h(u,v) - (f, v)_{L^2(\Omega_h)}, \quad v \in \mathcal{V}_h,
\]
is non-zero. To handle this, we introduce a fixed smooth domain $\tilde{\Omega}$ 
such that 
\[
\Tilde{\Omega}_h = \Omega \cup \Omega_h \subset \Tilde{\Omega}.
\] The domain $\tilde{\Omega}$ is chosen as a neighbourhood of $\Omega$ so that it provides a geometrically close approximation of the physical domain while remaining independent of the mesh parameter $h$ 
(see Figure \ref{nonConvexDomain}). This construction allows the exact solution $u$ to be extended to a function $\Tilde{u} \in H^{N+1}(\Tilde{\Omega})$ following \cite{STEIN2016252}.
Similarly to the norm defined in \eqref{DG_norm}, for $u \in H^2(\mathcal{T}_h)$ we consider
\begin{align} \label{DG_norm_nonConvex}
    \left(\opnorm{u}'\right)^2&= \sum_{k=1}^K \left(\norm{u}^2_{H^1(T^k \cap \Omega)} +  h_k^2 \abs{u}^2_{H^2(T^k  \cap \Omega)} \right) + \sum_{e \in \mathcal{E}_h} h_e^{-1} \norm{\llbracket u\rrbracket}^2_{L^2(e  \cap \Omega)} \nonumber \\
    &\quad+\sum_{e \in \mathcal{E}_h}\frac{1}{2} \norm{ \abs{\boldsymbol{b}\cdot \boldsymbol{n}_e}^{1/2}\llbracket u\rrbracket}^2_{L^2(e\cap \Omega)}.
\end{align}

\begin{figure}[H]
\centering
\includegraphics[width=12cm]{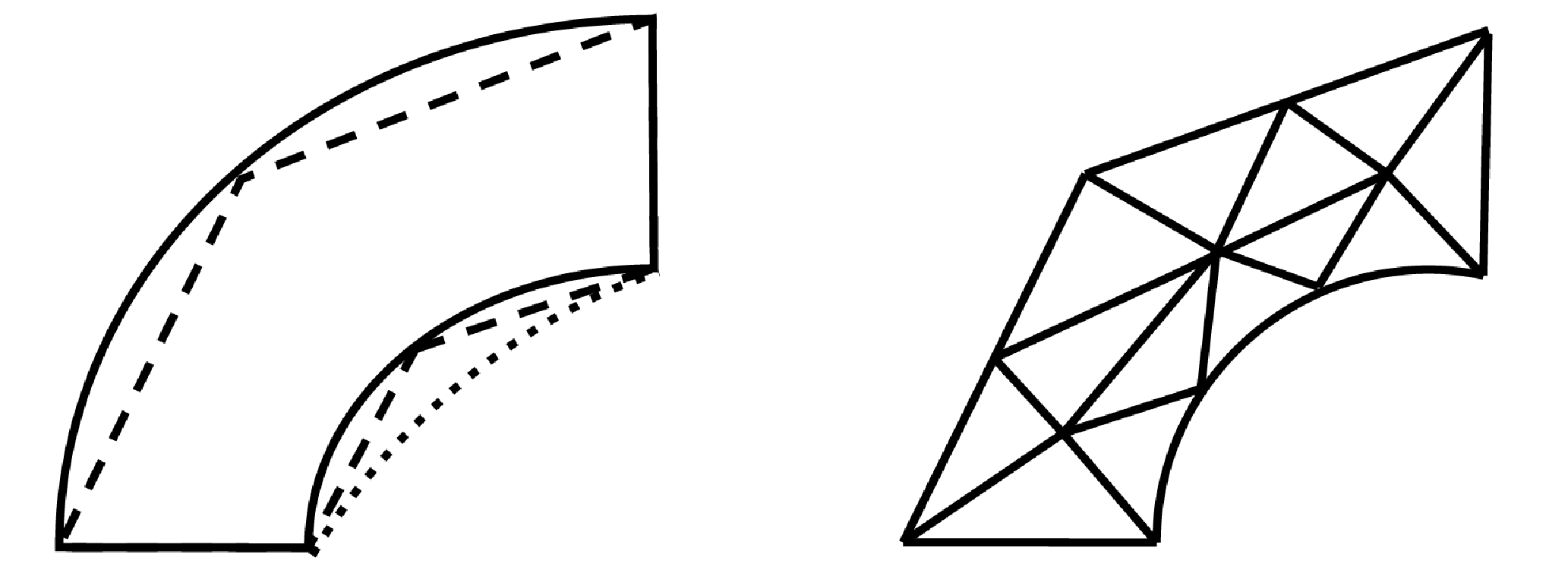}
\caption{\textbf{Left panel:} Example of non-convex domain $\Omega$ (solid), polygonal mesh $\Omega_h$ (dashed) and extended smooth domain $\tilde{\Omega}$ (dotted). \textbf{Right panel:} Example of $\Omega \cap \Omega_h$.}
\label{nonConvexDomain}
\end{figure}

Extend $f$ to $\tilde{\Omega} \setminus \Omega$ so that $f \in H^{N-1}(\tilde{\Omega})$, and denote the extension also by $f$. Assume continuous extensions of $a$, $\boldsymbol{b}$ and $c$ to $\tilde{\Omega} \setminus \Omega$, and let $f$ vanish in $\Omega_h \setminus \Omega$. Denote by $\tilde{u}$ the regular extension of $u$ to $\tilde{\Omega}$ with $\tilde{u} \in H^{N+1}(\tilde{\Omega})$ and $\tilde{u}_{|_\Omega} = u$ \cite{ruas2019OptimalLA, STEIN2016252}. The next theorems estimate the residual $B^{DG}_h(\tilde{u},v) - (f,v)_{L^2(\Omega_h)}$ via two approaches.

\begin{theorem}[DG-norm estimate - $L^2$ residual]\label{nonConvex_errorDG_comL2}
Let $u \in H^{N+1}(\Omega)$ solve \eqref{problem_E}--\eqref{problem_B}. Then, for $h<h_0$, where $h_0$ is the threshold defined in the proof of Theorem 1, 
there exist constants $\Tilde{\mathcal{C}}_1$ and $\widehat{C}_0$, independent of $h$, such that
\begin{equation}\label{nonConvex_errorDG_estimateL2}
\opnorm{ u-u_h}' \leq \Tilde{\mathcal{C}}_1 h^{N} \abs{\Tilde{u}}_{H^{N+1}(\Tilde{\Omega})} + \widehat{C}_0 h^{5/2} \norm{R(\Tilde{u})}_{L^2(\Tilde{\Omega})},
\end{equation}
where $\Tilde{u}$ is the regular extension of $u$ to $\Tilde{\Omega}$ and $
R(\Tilde{u}) = -\Delta \Tilde{u} + \nabla\cdot(\boldsymbol{b} \Tilde{u}) + c \Tilde{u}.
$
\end{theorem}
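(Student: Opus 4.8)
The plan is to reproduce the inf-sup/interpolation scheme of Theorem~\ref{error_DG}, the only genuinely new ingredient being a quantitative bound on the consistency residual created by the geometric mismatch $\Omega_h\setminus\Omega$ (since, as noted, Galerkin orthogonality fails here). Let $I_h(\tilde u)\in\mathcal{W}_h$ be the $\mathcal{P}_N$-interpolant of the regular extension $\tilde u$ and split
\[
\opnorm{u-u_h}' \leq \opnorm{u-I_h(\tilde u)}' + \opnorm{u_h-I_h(\tilde u)}'.
\]
Because the primed norm \eqref{DG_norm_nonConvex} only integrates over the restrictions $T^k\cap\Omega$ and $e\cap\Omega$, one has $\opnorm{w}'\leq\opnorm{w}$ termwise; together with $u=\tilde u$ on $\Omega$ this controls the first term by the full interpolation error $\opnorm{\tilde u-I_h(\tilde u)}\leq C_a h^N|\tilde u|_{H^{N+1}(\tilde\Omega)}$, exactly as in Theorem~\ref{error_DG} via Lemma~4.1 of \cite{ruas2019OptimalLA}. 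For the second term I would note $u_h-I_h(\tilde u)\in\mathcal{W}_h$, apply the inf-sup condition \eqref{DesigAlpha_v2}, and pass to the unprimed norm to obtain
\[
\opnorm{u_h-I_h(\tilde u)}' \leq \opnorm{u_h-I_h(\tilde u)} \leq \frac{1}{\alpha}\sup_{v\in\mathcal{V}_h\setminus\{0\}}\frac{B^{DG}_h(u_h-I_h(\tilde u),v)}{\opnorm{v}}.
\]

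Adding and subtracting $\tilde u$ in the numerator and using the discrete equation $B^{DG}_h(u_h,v)=(f,v)_{L^2(\Omega_h)}$ gives $B^{DG}_h(u_h-\tilde u,v)=-R_h(v)$, where $R_h(v)=B^{DG}_h(\tilde u,v)-(f,v)_{L^2(\Omega_h)}$, so that
\[
B^{DG}_h(u_h-I_h(\tilde u),v) = -R_h(v) + B^{DG}_h(\tilde u-I_h(\tilde u),v).
\]
The last term is handled by boundedness \eqref{boundedness} and interpolation, yielding $\abs{B^{DG}_h(\tilde u-I_h(\tilde u),v)}\leq C_bC_a h^N|\tilde u|_{H^{N+1}(\tilde\Omega)}\opnorm{v}$. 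For the residual, since $\tilde u\in H^{N+1}(\tilde\Omega)$ is single-valued across interfaces ($\llbracket\tilde u\rrbracket=\llbracket\nabla\tilde u\rrbracket=0$), I would repeat the elementwise integration by parts of the convex case; the interior-edge contributions cancel the average terms in $B^{DG}_h$ and, crucially, every boundary contribution on $\partial\Omega_h$ dies because $v=0$ there. This collapses $B^{DG}_h(\tilde u,v)$ to $(R(\tilde u),v)_{L^2(\Omega_h)}$. Since $R(\tilde u)=R(u)=f$ on $\Omega\cap\Omega_h$ and $f\equiv0$ on $\Omega_h\setminus\Omega$ by construction, this produces the identity $R_h(v)=(R(\tilde u),v)_{L^2(\Omega_h\setminus\Omega)}$.

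The main obstacle is the geometric estimate of this residual, i.e.\ bounding $\norm{v}_{L^2(\Omega_h\setminus\Omega)}$ for $v\in\mathcal{V}_h$. The key facts are that $\Omega_h\setminus\Omega$ is a union of thin slivers adjacent to the boundary edges $e^{kB}$, of width $O(h_k^2)$ (the sagitta of a smooth arc over a chord of length $O(h_k)$, cf.\ Lemma~3.1 in \cite{ruas2019OptimalLA}) and area $O(h_k^3)$, and that $v$ vanishes on $e^{kB}$. A mean-value argument then gives $\sup_{T^k\setminus\Omega}\abs{v}\leq C h_k^2\norm{\nabla v}_{L^\infty(T^k)}$, and combining the $O(h_k^3)$ area with a $\mathcal{P}_N$ inverse inequality $\norm{\nabla v}_{L^\infty(T^k)}\leq C h_k^{-1}\norm{\nabla v}_{L^2(T^k)}$ yields $\norm{v}^2_{L^2(T^k\setminus\Omega)}\leq C h_k^5\abs{v}^2_{H^1(T^k)}$. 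Summing over boundary elements gives $\norm{v}_{L^2(\Omega_h\setminus\Omega)}\leq C h^{5/2}\opnorm{v}$, so Cauchy--Schwarz delivers $\abs{R_h(v)}\leq C h^{5/2}\norm{R(\tilde u)}_{L^2(\tilde\Omega)}\opnorm{v}$.

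Collecting the two numerator bounds, dividing by $\opnorm{v}$, taking the supremum, and adding the interpolation term from the first split gives \eqref{nonConvex_errorDG_estimateL2} with $\tilde{\mathcal{C}}_1=C_a(1+C_b/\alpha)$ and $\widehat{C}_0=C/\alpha$. I expect the delicate points to be two: first, verifying the exact cancellation of the interior- and boundary-edge terms when reducing $B^{DG}_h(\tilde u,v)$, so that no spurious jump contribution survives and the residual is genuinely supported on $\Omega_h\setminus\Omega$; and second, the careful bookkeeping of the $h$-powers in the sliver estimate, where the interplay between the $O(h^2)$ width, the $O(h^3)$ area, and the inverse inequality is precisely what produces the $h^{5/2}$ factor multiplying $\norm{R(\tilde u)}_{L^2(\tilde\Omega)}$.
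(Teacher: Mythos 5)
Your proposal is correct and follows essentially the same route as the paper: triangle inequality with the interpolant of $\tilde{u}$, the inf-sup condition, the split into an interpolation term and the consistency residual, identification of the residual as $(R(\tilde{u}),v)$ supported on the slivers $T^k\cap(\Omega_h\setminus\Omega)$, and the $O(h^2)$ width/$O(h^3)$ area geometric estimate producing the $h^{5/2}$ factor. The only difference is cosmetic: you spell out the sliver estimate (which the paper delegates to Proposition 2.1 and Lemma 3.1 of \cite{ruas2019OptimalLA}) and you work consistently with the unprimed norm in the inf-sup step, whereas the paper places the primed norm in the denominator.
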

%

\begin{proof} This proof follows the arguments in the proof of Theorem 4.6 in \cite{ruas2019OptimalLA}. 
Let $I_h(\Tilde{u}) \in \mathcal{V}_h$ denote the DG interpolant of $\Tilde{u}$ over $\mathcal{T}_h$. Using the triangle inequality, we have
\[
\opnorm{u - u_h}' \leq \opnorm{\Tilde{u} - I_h(\Tilde{u})}' + \opnorm{u_h - I_h(\Tilde{u})}'.
\]
By standard DG interpolation theory for $\Tilde{u} \in H^{N+1}(\Tilde{\Omega})$, there exists a constant $\Tilde{\mathcal{C}}_a$ independent of $h$ such that
\[
\opnorm{\Tilde{u} - I_h(\Tilde{u})}' \leq \Tilde{\mathcal{C}}_a h^N \abs{\Tilde{u}}_{H^{N+1}(\Tilde{\Omega})}.
\]
Applying the discrete inf-sup condition for $B^{DG}_h$, we obtain
\begin{align*}
\opnorm{u_h - I_h(\Tilde{u})}' & \leq \frac{1}{\alpha} \sup_{v \in \mathcal{V}_h \setminus \{0\}} \frac{B^{DG}_h(u_h - I_h(\Tilde{u}), v)}{\opnorm{v}'} \\ &= \frac{1}{\alpha} \sup_{v \in \mathcal{V}_h \setminus \{0\}} \frac{B^{DG}_h(\Tilde{u} - I_h(\Tilde{u}), v) + B^{DG}_h(u_h - \Tilde{u}, v)}{\opnorm{v}'}.
\end{align*}
The first term is bounded by the continuity of $B^{DG}_h$:
\[
\abs{B^{DG}_h(\Tilde{u} - I_h(\Tilde{u}), v)} \leq C_b \opnorm{\Tilde{u} - I_h(\Tilde{u})}' \opnorm{v}'.
\]
For the second term, using the DG formulation and the definition of the residual $R(\Tilde{u}) = -\Delta \Tilde{u}+ \nabla \cdot (\boldsymbol{b} \Tilde{u}) + c \Tilde{u}$, we have
\[
B^{DG}_h(u_h - \Tilde{u}, v) = (f, v)_{L^2(\Omega_h)} - B^{DG}_h(\Tilde{u}, v) = - \sum_{k \in \mathcal{Q}^B} (R(\Tilde{u}), v)_{L^2(\Delta_k)},
\]
where $\Delta_k = T^k \cap (\Omega_h \setminus \Omega)$. Using Cauchy-Schwarz and trace inequalities on $\Delta_k$ (see \eqref{distance_boundaries} and Lemma 3.1 in \cite{ruas2019OptimalLA}), we get
\begin{align*}
    \sum_{k \in \mathcal{Q}^B} \abs{(R(\Tilde{u}), v)_{L^2(\Delta_k)}} &\leq \sum_{k \in \mathcal{Q}^B} \norm{R(\Tilde{u})}_{L^2(\Delta_k)} \norm{v}_{L^2(\Delta_k)} \\
    &\leq C^{3/2}_{\partial \Omega}C_J h^{5/2} \norm{R(\Tilde{u})}_{L^2(\Tilde{\Omega})} \opnorm{v}'.
\end{align*}

Combining the above bounds yields
\[
\opnorm{u_h - I_h(\Tilde{u})}' \leq \frac{\Tilde{\mathcal{C}}_a C_b}{\alpha} h^N \abs{\Tilde{u}}_{H^{N+1}(\Tilde{\Omega})} + \frac{C^{3/2}_{\partial \Omega}C_J}{\alpha} h^{5/2} \norm{R(\Tilde{u})}_{L^2(\Tilde{\Omega})}.
\]
Finally, we conclude
\[
\opnorm{u - u_h}' \leq \Tilde{\mathcal{C}}_1 h^N \abs{\Tilde{u}}_{H^{N+1}(\Tilde{\Omega})} + \widehat{C}_0 h^{5/2} \norm{R(\Tilde{u})}_{L^2(\Tilde{\Omega})},
\]
with  $ \Tilde{\mathcal{C}}_1 = \Tilde{\mathcal{C}}_a \left(1 +C_b/\alpha \right) $ and $\widehat{C}_0 = C^{3/2}_{\partial \Omega} C_J/\alpha $, which completes the proof.
\end{proof}

\begin{theorem}[DG-norm estimate - $L^\infty$ residual]\label{nonConvex_errorDG_L_infty}
For $N \geq 3$, let $u \in H^{N+1}(\Omega)$ be the solution of \eqref{problem_E}--\eqref{problem_B}. Then, for $h<h_0$, where $h_0$ is the threshold defined in the proof of Theorem 1, 
there exist mesh-independent constants $\Tilde{\mathcal{C}}_1$ and $C'_0$ such that the DG solution $u_h$ satisfies
\begin{equation}\label{nonConvex_errorDG_estimateLinfty}
\opnorm{ u- u_h}' \leq \Tilde{\mathcal{C}}_1 h^{N} \abs{\Tilde{u}}_{H^{N+1}(\Tilde{\Omega})} + C'_0 h^{7/2} \norm{R(\Tilde{u})}_{L^{\infty}(\Tilde{\Omega})},
\end{equation}
where $\Tilde{u}$ is a regular extension of $u$ to $\Tilde{\Omega}$ such that $\Tilde{u} \in H^{N+1}(\Tilde{\Omega})$ and $
R(\Tilde{u}) = -\Delta \Tilde{u} + \nabla\cdot(\boldsymbol{b} \Tilde{u}) + c \Tilde{u}.
$
\end{theorem}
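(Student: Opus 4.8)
The plan is to reproduce the argument of Theorem~\ref{nonConvex_errorDG_comL2} almost verbatim and to change only the way the geometric residual is estimated, trading the $L^2$ bound on $R(\Tilde{u})$ for an $L^\infty$ bound. I would start from the triangle inequality $\opnorm{u-u_h}' \leq \opnorm{\Tilde{u}-I_h(\Tilde{u})}' + \opnorm{u_h-I_h(\Tilde{u})}'$, dispose of the first term by DG interpolation theory to generate the contribution $\Tilde{\mathcal{C}}_1 h^N\abs{\Tilde{u}}_{H^{N+1}(\Tilde{\Omega})}$, and apply the inf-sup condition of Theorem~\ref{ProposicaoDesAlpha_v2} to the second term. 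Writing $B^{DG}_h(u_h-I_h(\Tilde{u}),v) = B^{DG}_h(\Tilde{u}-I_h(\Tilde{u}),v) + B^{DG}_h(u_h-\Tilde{u},v)$, the first summand is absorbed by boundedness \eqref{boundedness}, while the second reduces, exactly as before, to the sliver residual $B^{DG}_h(u_h-\Tilde{u},v) = -\sum_{k\in\mathcal{Q}^B}(R(\Tilde{u}),v)_{L^2(\Delta_k)}$ with $\Delta_k = T^k\cap(\Omega_h\setminus\Omega)$.

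The only genuinely new estimate is the bound on this residual in terms of $\norm{R(\Tilde{u})}_{L^\infty(\Tilde{\Omega})}$. Here I would apply H\"older's inequality on each sliver in the form $\abs{(R(\Tilde{u}),v)_{L^2(\Delta_k)}} \leq \norm{R(\Tilde{u})}_{L^\infty(\Tilde{\Omega})}\,\norm{v}_{L^1(\Delta_k)}$, and then exploit the extra smallness carried by the test function. Since $v\in\mathcal{V}_h$ vanishes on the entire computational edge $e^{kB}\subset\partial\Omega_h$, the mean value theorem along the normal direction yields $\abs{v(\boldsymbol{x})} \leq \operatorname{dist}(\boldsymbol{x},e^{kB})\,\norm{\nabla v}_{L^\infty(T^k\cup\Delta_k)}$ for $\boldsymbol{x}\in\Delta_k$. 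Combining this with the sliver estimates $\operatorname{dist}(\cdot,e^{kB}) \leq C_{\partial\Omega}h_k^2$ and $\abs{\Delta_k}\leq C_{\partial\Omega}h_k^3$ (Lemma~3.1 and Proposition~2.1 in \cite{ruas2019OptimalLA}) and the polynomial inverse inequality $\norm{\nabla v}_{L^\infty(T^k)} \leq C h_k^{-1}\abs{v}_{H^1(T^k)}$ gives the local bound $\norm{v}_{L^1(\Delta_k)} \leq C h_k^4\,\abs{v}_{H^1(T^k)}$. This is precisely where the gain of one full power of $h$ over the $L^2$-based argument of Theorem~\ref{nonConvex_errorDG_comL2} arises: measuring $v$ in $L^1$ rather than $L^2$ on $\Delta_k$ supplies the additional Höolder factor $\abs{\Delta_k}^{1/2}\sim h^{3/2}$.

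To finish I would sum over the $\mathcal{O}(h^{-1})$ boundary elements and apply the discrete Cauchy--Schwarz inequality, $\sum_{k\in\mathcal{Q}^B}\abs{v}_{H^1(T^k)} \leq (\#\mathcal{Q}^B)^{1/2}\big(\sum_{k}\abs{v}_{H^1(T^k)}^2\big)^{1/2} \leq C h^{-1/2}\opnorm{v}'$, where the factor $h^{-1/2}$ accounts for the number of boundary elements. This produces $\sum_{k\in\mathcal{Q}^B}\abs{(R(\Tilde{u}),v)_{L^2(\Delta_k)}} \leq C'_0\,h^{7/2}\,\norm{R(\Tilde{u})}_{L^\infty(\Tilde{\Omega})}\,\opnorm{v}'$. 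Dividing by $\opnorm{v}'$, taking the supremum over $v\in\mathcal{V}_h\setminus\{0\}$, and inserting the result into the inf-sup bound for $\opnorm{u_h-I_h(\Tilde{u})}'$ then yields \eqref{nonConvex_errorDG_estimateLinfty} with $\Tilde{\mathcal{C}}_1 = \Tilde{\mathcal{C}}_a(1+C_b/\alpha)$ and $C'_0$ the constant just assembled.

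I expect the main obstacle to be the sharp control of the sliver contribution: one must verify that the first-order vanishing of $v$ on $e^{kB}$, the $\mathcal{O}(h^2)$ width and $\mathcal{O}(h^3)$ area of $\Delta_k$, and the inverse inequality can be chained without forfeiting the gained power of $h$, and that the polynomial extension of $v$ to $T^k\cup\Delta_k$ keeps all local norm equivalences uniform in $h$. The hypothesis $N\geq 3$ enters here, guaranteeing that the relevant local approximation and inverse estimates hold and, more importantly, that the residual term $h^{7/2}$ does not degrade the optimal rate for the degrees of practical interest; once this sliver estimate is secured, the rest of the proof is a routine repetition of Theorem~\ref{nonConvex_errorDG_comL2}.
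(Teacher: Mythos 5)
Your proposal is correct and follows essentially the same route as the paper: both reduce to the sliver residual $-\sum_{k\in\mathcal{Q}^B}(R(\Tilde{u}),v)_{L^2(\Delta_k)}$ and extract the extra power of $h$ from the $\mathcal{O}(h_k^3)$ area of $\Delta_k$, the vanishing of $v$ on $e^{kB}$, and a counting argument over the $\mathcal{O}(h^{-1})$ boundary elements; the paper merely applies Cauchy--Schwarz in $L^2$ and then bounds $\norm{R(\Tilde{u})}_{L^2(\Delta_k)}\leq\abs{\Delta_k}^{1/2}\norm{R(\Tilde{u})}_{L^\infty(\Tilde{\Omega})}$, whereas you apply H\"older in the $L^\infty$--$L^1$ pairing directly, which is the same bookkeeping. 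The one point to correct is your explanation of the hypothesis $N\geq 3$: it is not needed for the local inverse or approximation estimates, but only for the Sobolev embedding $H^{N+1}(\Tilde{\Omega})\hookrightarrow W^{2,\infty}(\Tilde{\Omega})$, which is what guarantees that $R(\Tilde{u})\in L^{\infty}(\Tilde{\Omega})$ so that the right-hand side of the estimate is finite.
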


\begin{proof}
The proof follows the same lines as that of Theorem~\ref{nonConvex_errorDG_comL2}. 
The only additional ingredient is the Sobolev embedding
$
H^{N+1}(\tilde{\Omega}) \hookrightarrow W^{2,\infty}(\tilde{\Omega}),$ $N >2$,
which implies that $\Delta \tilde{u} \in L^{\infty}(\tilde{\Omega})$. Applying the Cauchy-Schwarz inequality
\begin{align*}
&\abs{\sum_{k \in \mathcal{Q}^B} (R(\Tilde{u}), v)_{L^2(\Delta_k)}} \\
&\leq \sum_{k \in \mathcal{Q}^B} \norm{R(\Tilde{u})}_{L^2(\Delta_k)} \norm{v}_{L^2(\Delta_k)}
\leq C_{\partial\Omega}^2 C_J C(\partial \Omega) h^{7/2} \norm{R(\Tilde{u})}_{L^\infty(\Tilde{\Omega})} \opnorm{v},
\end{align*}
assuming that exists a mesh-independent constant $C(\partial \Omega)$ such that $\sum_{k \in \mathcal{Q}^B} h_k \leq C^2(\partial \Omega)$. Hence, \eqref{nonConvex_errorDG_estimateLinfty} holds with
\[
\Tilde{\mathcal{C}}_1 = \Tilde{\mathcal{C}}_a \Big(1 + \frac{C_b}{\alpha} \Big), \quad
C'_0 = \frac{C_{\partial\Omega}^2 C_J C(\partial \Omega)}{\alpha}.
\]
\end{proof}

For $N=2$ and $N=3$, we have
\begin{align}
\opnorm{u-u_h}' &\leq \Tilde{\mathcal{C}}_2 h^2 \Big(\abs{\Tilde{u}}_{H^3(\Tilde{\Omega})} + h^{1/2} \norm{R(\Tilde{u})}_{L^2(\Tilde{\Omega})}\Big), \label{NonConvex_N2} \\
\opnorm{u-u_h}' &\leq \Tilde{\mathcal{C}}_3 h^3 \Big(\abs{\Tilde{u}}_{H^4(\Tilde{\Omega})} + h^{1/2} \norm{R(\Tilde{u})}_{L^\infty(\Tilde{\Omega})}\Big). \label{NonConvex_N3}
\end{align}

We now establish error estimates in the $L^2$-norm for the case of a non-convex domain $\Omega$, requiring additional regularity from the solution $u$. Optimal convergence can be achieved not only when $u$ is more regular but also when the computational domain $\Omega_h$ better approximates the physical domain $\Omega$, i.e., when $\Omega_h \setminus \Omega$ is of order $h^q$ with $q>2$ \cite{Lew2011}. With our definition of $\Omega_h$, optimality is not attained for $N>2$, see \eqref{distance_boundaries}.

\begin{theorem}[$L^2$-norm estimate]\label{errorNonConvex_L2}
Let $N=2$, assume that $\Omega$ is a non-convex domain with piecewise $C^{N+1}$ boundary $\partial \Omega$, and $u \in H^{3+r}(\Omega)$ with $r = 1/2 + \epsilon$, $\epsilon > 0$. Then for $h<h_0$, where $h_0$ is the threshold defined in the proof of Theorem 1,  
\begin{align}\label{errorNonConvex_L2_eq}
\norm{u-u_h}_{L^2(\Omega \cap \Omega_h)}\!&\leq\!\Tilde{\mathcal{C}}_0 h^3 \Big(\!\norm{\Tilde{u}}_{H^{3+r}(\Tilde{\Omega})}\!+\!\abs{\Tilde{u}}_{H^3(\Tilde{\Omega})}\!+\!h^{1/2} \norm{R(\Tilde{u})}_{L^2(\Tilde{\Omega})}\!\Big),
\end{align}
where $\Tilde{u}$ is the regular extension of $u$ to $\Tilde{\Omega}$  and $
R(\Tilde{u}) = -\Delta \Tilde{u} + \nabla\cdot(\boldsymbol{b} \Tilde{u}) + c \Tilde{u}.
$
\end{theorem}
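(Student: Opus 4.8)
The plan is to run the Aubin--Nitsche duality argument of Theorem~\ref{error_L2}, now measured in the broken norm $\opnorm{\cdot}'$ and with the essential difference that Galerkin orthogonality fails: the consistency defect is precisely the layer residual $B^{DG}_h(\Tilde{u}-u_h,v)=\sum_{k\in\mathcal{Q}^B}(R(\Tilde{u}),v)_{L^2(\Delta_k)}$ isolated in the proof of Theorem~\ref{nonConvex_errorDG_comL2}. First I would introduce the adjoint solution $z\in H^1_0(\Omega)$ of $-\Delta z-\boldsymbol{b}\cdot\nabla z+cz=u-u_h$ (with $u-u_h$ extended by zero outside $\Omega\cap\Omega_h$), invoke elliptic regularity \cite{Evans} to obtain $z\in H^2(\Omega)$ with $\norm{z}_{H^2(\Omega)}\le C(\Omega)\norm{u-u_h}_{L^2(\Omega\cap\Omega_h)}$, and rewrite $\norm{u-u_h}_{L^2(\Omega\cap\Omega_h)}$ as $C(\Omega)$ times a quotient whose numerator is $(u-u_h,-\Delta z-\boldsymbol{b}\cdot\nabla z+cz)$ and whose denominator is $\norm{z}_{H^2(\Omega)}$, exactly as in \eqref{estimativa_erro7}.

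Integrating by parts elementwise and splitting $z=\Pi_h(z)+e_h(z)$, with $\Pi_h(z)$ the continuous piecewise-linear interpolant and $e_h(z)=z-\Pi_h(z)$, produces three families of contributions to the numerator: (i) the geometric boundary terms $b_{1h},\dots,b_{7h}$ already treated in Theorem~\ref{error_L2}; (ii) the consistency term $B^{DG}_h(u-u_h,e_h(z))$; and (iii) the genuine residual $B^{DG}_h(\Tilde{u}-u_h,\Pi_h(z))=\sum_{k\in\mathcal{Q}^B}(R(\Tilde{u}),\Pi_h(z))_{L^2(\Delta_k)}$, which is absent in the convex case. For (i) and (ii) I would reuse the convex estimates almost verbatim, replacing the energy bound of Theorem~\ref{error_DG} by the $N=2$ estimate \eqref{NonConvex_N2}; together with the interpolation bound $\opnorm{e_h(z)}'\le C h\,\abs{z}_{H^2(\Omega)}$ and $\norm{z}_{H^2(\Omega)}\le C\norm{u-u_h}_{L^2(\Omega\cap\Omega_h)}$, these contribute at order $h^3$, and the factor $h^{1/2}\norm{R(\Tilde{u})}_{L^2(\Tilde{\Omega})}$ carried inside \eqref{NonConvex_N2} propagates to the explicit $h^{7/2}\norm{R(\Tilde{u})}_{L^2(\Tilde{\Omega})}$ term of \eqref{errorNonConvex_L2_eq}.

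The heart of the argument, and the main obstacle, is term (iii): I must bound $\sum_{k\in\mathcal{Q}^B}\norm{R(\Tilde{u})}_{L^2(\Delta_k)}\norm{\Pi_h(z)}_{L^2(\Delta_k)}$ by a quantity of order $h^{3}\norm{z}_{H^2(\Omega)}$, so that after division it lands at order $h^3$ and is absorbed through $\norm{R(\Tilde{u})}_{L^2(\Tilde{\Omega})}\le C\norm{\Tilde{u}}_{H^2(\Tilde{\Omega})}\le C\norm{\Tilde{u}}_{H^{3+r}(\Tilde{\Omega})}$. Each layer $\Delta_k$ has width $\mathcal{O}(h^2)$ and area $\mathcal{O}(h^3)$ (Proposition~2.1 in \cite{ruas2019OptimalLA}), and $\Pi_h(z)$ vanishes identically on $e^{kB}$ since its two endpoints lie on $\partial\Omega$ where $z=0$; a Poincar\'e estimate across $\Delta_k$ then gives $\norm{\Pi_h(z)}_{L^2(\Delta_k)}\le C h^{5/2}\norm{\nabla\Pi_h(z)}_{L^2(T^k)}$. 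The subtlety is that this elementwise bound alone yields only order $h^{5/2}$, which is insufficient; the missing half power comes from the boundary-layer smallness of the dual solution. Because $z=0$ on $\partial\Omega$ and $z\in H^2$, a trace argument on the $\mathcal{O}(h)$-wide boundary strip gives $\sum_{k\in\mathcal{Q}^B}\norm{\nabla\Pi_h(z)}^2_{L^2(T^k)}\le C h\,\abs{z}^2_{H^2(\Omega)}$ (cf. Lemma~3.1 in \cite{ruas2019OptimalLA}), and a Cauchy--Schwarz over $k\in\mathcal{Q}^B$ combines the two gains into the required $h^{3}\norm{R(\Tilde{u})}_{L^2(\Tilde{\Omega})}\norm{z}_{H^2(\Omega)}$.

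Finally, I would collect the estimates of (i)--(iii) into the quotient, cancel one factor of $\norm{u-u_h}_{L^2(\Omega\cap\Omega_h)}$ using the elliptic-regularity bound, and invoke $h<1$ to discard higher powers, reading off \eqref{errorNonConvex_L2_eq} with $\Tilde{\mathcal{C}}_0$ assembled from the constants of \eqref{NonConvex_N2}, the boundedness constant $C_b$ of \eqref{boundedness}, and the geometric constants of \cite{ruas2019OptimalLA}. It is worth stressing that $N=2$ is exactly the threshold at which term (iii) remains absorbable at order $h^3$: for $N>2$ the $\mathcal{O}(h^2)$-accurate polygonal boundary is too coarse to compensate the higher target rate, which is the mechanism behind the loss of optimality noted before the statement.
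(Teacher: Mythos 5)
Your proposal is correct and follows the same overall route as the paper: an Aubin--Nitsche duality argument with the splitting $z=\Pi_h(z)+e_h(z)$, the consistency part bounded through the boundedness of the bilinear form combined with the non-convex energy estimate \eqref{NonConvex_N2}, and the remaining contributions reduced to boundary and layer terms of order $h^3$. The one place where your organisation genuinely differs is the exterior-layer contribution: the paper keeps the pieces separate, introducing $b_{11h}(u_h,\Pi_h(z))$ in \eqref{exp_b11h} (tested against $u_h$ over $\Delta_k$, $k\in\mathcal{Q}^B$) alongside $b_{2h}$, and defers its estimate to Theorem~4.8 of \cite{ruas2019OptimalLA}; you instead recombine these into the single residual $\sum_{k\in\mathcal{Q}^B}(R(\Tilde{u}),\Pi_h(z))_{L^2(\Delta_k)}$ and bound it explicitly via the layer Poincar\'e estimate $\norm{\Pi_h(z)}_{L^2(\Delta_k)}\leq C h^{5/2}\norm{\nabla \Pi_h(z)}_{L^2(T^k)}$ together with the thin-strip bound $\sum_{k\in\mathcal{Q}^B}\norm{\nabla \Pi_h(z)}^2_{L^2(T^k)}\leq C h\,\norm{z}^2_{H^2(\Omega)}$. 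This is the same $h^{5/2}\cdot h^{1/2}$ mechanism that underlies the cited result, so the two treatments are equivalent in substance, but yours is more self-contained. One detail you gloss over and should make explicit: in the non-convex case $\Omega_h\not\subset\Omega$ and $z$ is only defined on $\Omega$, so the boundary terms analogous to $b_{5h},b_{6h},b_{7h}$ cannot be ``reused verbatim'' --- they must be restricted to $\partial\Omega_h\cap\Omega$ (the paper's $b_{8h},b_{9h},b_{10h}$), and the bilinear form itself must be replaced by its restriction $B^{DG'}_h$ over $\Omega\cap\Omega_h$; this is a bookkeeping issue rather than a gap in the argument.
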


\begin{proof}
Recalling the proof of Theorem \ref{error_L2} and Theorem 4.8 in \cite{ruas2019OptimalLA}, let  $z \in H^1_0(\Omega)$ be the solution of
 \begin{align*}
-\Delta z\left({\boldsymbol x}\right) -  \boldsymbol{b}\left({\boldsymbol x}\right)\cdot \nabla  z\left({\boldsymbol x}\right) + c\left({\boldsymbol x}\right) z\left({\boldsymbol x}\right)  &= u\left({\boldsymbol x}\right)  - u_h \left({\boldsymbol x}\right), \quad {\boldsymbol x} \in \Omega, \\
   z \left({\boldsymbol x}\right)&=0, \quad {\boldsymbol x} \in \partial \Omega. 
\end{align*} Considering $\Omega= \left(\Omega \cap \Omega_h\right) \cup \Delta_h$ and using integration by parts, we obtain
\begin{align} 
     \norm{u - u_h}_{L^2(\Omega)} &\leq C(\Omega) \frac{(u - u_h, -\Delta z -\boldsymbol{b}\cdot \nabla z+ cz)_{L^2(\Omega)}}{\norm{z}_{H^2(\Omega)}} \nonumber \\
    &\leq C(\Omega)\!\frac{ B^{DG'}_h(u-u_h,z)+B^{DG}_{\Delta h}(u-u_h, z)+b_{1h}(u-u_h,z)}{\norm{z}_{H^2(\Omega)}} \nonumber \\
    &\quad -\!C(\Omega)\!\frac{2\int_{\partial \Omega_h \cap \Omega} (u\!-\!u_h)\frac{\partial z}{\partial n_h} \diff s +\int_{\partial \Omega_h \cap \Omega} \frac{\eta}{h_e}  \llbracket u\!-\!u_h\rrbracket\!\cdot\!\llbracket z\rrbracket \dif s }{\norm{z}_{H^2(\Omega)}} \nonumber \\
    &\quad - C(\Omega) \frac{\int_{\partial \Omega_h \cap \Omega} \frac{\abs{\boldsymbol{b}\cdot \boldsymbol{n_e}}}{2}  \llbracket u-u_h\rrbracket \cdot  \llbracket z\rrbracket \dif s}{\norm{z}_{H^2(\Omega)}},
\end{align} where $B^{DG}_{\Delta h}$ and $b_{1h}$ are defined in \eqref{a_deltah_expressao} and \eqref{equacao_b1h}, respectively and
\begin{align*}
   &B^{DG'}_h(u-u_h,z) = (\nabla_h (u-u_h), \nabla z)_{L^2(\Omega \cap \Omega_h)} - (\boldsymbol{b} (u-u_h), \nabla z)_{L^2(\Omega \cap \Omega_h)} \\
     &+\!(u\!-\!u_h, cz)_{L^2(\Omega \cap \Omega_h)} - \int_{\Gamma_0} \llbracket z\rrbracket \cdot  \{\!\{ \nabla_h (u-u_h)\}\!\}\dif s\!-\!\int_{\Gamma_0} \llbracket u\!-\!u_h \rrbracket\!\cdot\!\{\!\{ \nabla z\}\!\}\dif s\\
     &+\!\int_{\Gamma'} \frac{\eta}{h_e}  \llbracket u\!-\!u_h\rrbracket\!\cdot\!\llbracket z\rrbracket \dif s\!+\!\int_{\Gamma'} \frac{\abs{\boldsymbol{b}\cdot \boldsymbol{n_e}}}{2}  \llbracket u\!-\!u_h\rrbracket\!\cdot\!\llbracket z\rrbracket \dif s +  \int_{\Gamma_0} \boldsymbol{b} \cdot  \llbracket z\rrbracket \{\!\{u\!-\!u_h\}\!\} \dif s,
\end{align*} with $\Gamma' = \Gamma_0 \cup (\partial \Omega_h \cap \Omega)$. Since $\norm{u - u_h}_{L^2(\Omega  \cap \Omega_h)} \leq  \norm{u - u_h}_{L^2(\Omega)}$, we have 
\begin{align}\label{desigualdade_nonConvex1}
    \norm{u\!-\!u_h}_{L^2(\Omega  \cap \Omega_h)}\!&\leq\!C(\Omega) \frac{B^{DG'}_h(u\!-\!u_h,z) + B^{DG}_{\Delta h}(u\!-\!u_h, z)\!+\!b_{1h}(u\!-\!u_h,z)}{\norm{z}_{H^2(\Omega)}} \nonumber \\
    & +\!C(\Omega)\!\frac{ b_{8h}(\!u\!-\!u_h,z\!)\!+\!b_{9h}(\!u\!-\!u_h,z\!)\!+\!b_{10h}(\!u\!-\!u_h,z\!)}{\norm{z}_{H^2(\Omega)}},
\end{align} where 
\begin{align}
b_{8h}(u-u_h,z) &= - 2\int_{\partial \Omega_h \cap \Omega} (u-u_h)\frac{\partial z}{\partial n_h} \diff s, \label{equacao_b8h_nonConvex} \\
b_{9h}(u-u_h,z) &= -\int_{\partial \Omega_h \cap \Omega} \frac{\eta}{h_e}  \llbracket u-u_h\rrbracket \cdot  \llbracket z\rrbracket \dif s, \label{equacao_b9h_nonConvex} \\
b_{10h}(u-u_h,z) &= -\int_{\partial \Omega_h \cap \Omega} \frac{\abs{\boldsymbol{b}\cdot \boldsymbol{n_e}}}{2}  \llbracket u-u_h\rrbracket \cdot  \llbracket z\rrbracket \dif s. \label{equacao_b10h_nonConvex}
\end{align}

Following similar arguments as in \cite{ruas2019OptimalLA}, we write
\begin{align}\label{estimateExpression_nonconvex}
&\norm{u\!-\!u_h}_{L^2(\Omega  \cap \Omega_h)}\!\leq C(\Omega) \frac{ B^{DG'}_h(\!u\!-\!u_h,e_h(z)\!)\!+\!b_{1h}(\!u\!-\!u_h,z\!)\!+\!b_{2h}(\!u\!-\!u_h, \Pi_h(z)\!) }{\norm{z}_{H^2(\Omega)}} \nonumber\\
 &\quad + C(\Omega) \frac{ b_{3h}(u - u_h,\Pi_h(z)) + b_{4h}(u-u_h, e_h(z)) +  b_{8h}(u - u_h,z) }{\norm{z}_{H^2(\Omega)}} \nonumber \\
 &\quad +  C(\Omega)  \frac{ b_{9h}(u - u_h,z)  + +  b_{10h}(u - u_h,z) + b_{11h}(u_h,\Pi_h(z)) }{\norm{z}_{H^2(\Omega)}},
\end{align} where \begin{equation}\label{exp_b11h}
    b_{11h}(u_h,\Pi_h) = - \sum_{k \in \mathcal{Q}^B} \int_{\Delta_k} -\Delta u_h \Pi_h - \boldsymbol{b} u_h \cdot \nabla \Pi_h+ cu_h \Pi_h \dif \boldsymbol{x}.
\end{equation}

We now estimate upper bounds for $B^{DG'}_h$, $b_{8h}$, $b_{9h}$, $b_{10h}$ and $b_{11h}$. Using the boundedness of the bilinear form and applying the Theorem \ref{nonConvex_errorDG_comL2} with $N=2$ (see inequality \eqref{NonConvex_N2}), we first note  
\begin{align}\label{ah_estimate_nonconvex}
     B^{DG'}_h(\!u\!-\!u_h,e_h(z)\!)\!&\leq\!C_b\Tilde{\mathcal{C}_2} h^{2}  \left(\!\abs{\Tilde{u}}_{H^{3}(\Tilde{\Omega})}\!+\!h^{1/2}\norm{R(\tilde{u})}_{L^2(\Tilde{\Omega})}\!\right) \opnorm{e_h(z)}'.
\end{align} Moreover, similarly to \eqref{ehz_estimation}, we have 
\begin{align*}
    \opnorm{e_h(z)}'\leq  C'_{\Omega,z} h \abs{z}_{H^2(\Omega)},
\end{align*} with $C'_{\Omega,z}$ a mesh-independent constant. Thus, 
\begin{align}\label{ah_estimate2_nonconvex}
     B^{DG'}_h(\!u\!-\!u_h,e_h(z)\!)\!&\leq\!C'_{a} h^{3} \left(\!\abs{\Tilde{u}}_{H^{3}(\Tilde{\Omega})}\!+\!h^{1/2}  \norm{R(\tilde{u})}_{L^2(\Tilde{\Omega})}\!\right)\norm{z}_{H^2(\Omega)},
\end{align} with $C'_a = C_b \Tilde{\mathcal{C}_2} C'_{\Omega,z}$. 

Following similar arguments as in the estimates for $b_{5h}, b_{6h}$ and $b_{7h}$, and using \eqref{NonConvex_N2}, we obtain, for $i=8,9,10$,
\begin{align}\label{est_b8h}
   &b_{ih}(u\!-\!u_h,z)\leq \Tilde{C}_{bi} h^{3} \left(\norm{\Tilde{u}}_{H^{3+r}(\Tilde{\Omega})} + \abs{\Tilde{u}}_{H^{3}(\Tilde{\Omega})} + h^{1/2} \norm{R(\tilde{u})}_{L^2(\Tilde{\Omega})}\!\right) \norm{z}_{H^2(\Omega)}.
\end{align}

Similarly, for $b_{11h}$, following Theorem 4.8 in \cite{ruas2019OptimalLA}
\begin{align}
    b_{11h}(u_h,\Pi_h(z))\!&\leq\!\Tilde{C}_{b11} h^{3}  \left(\!\norm{\Tilde{u}}_{H^{3+r}(\Tilde{\Omega})}\!+\!\abs{\Tilde{u}}_{H^{3}(\Tilde{\Omega})}\!+\!h^{1/2} \norm{R(\tilde{u})}_{L^2(\Tilde{\Omega})}\!\right) \norm{z}_{H^2(\Omega)}. \label{est_b11h}
\end{align}

Estimates for $b_{ih}$, with $i=1,2,3,4$, follow as in the proof of Theorem \ref{error_L2}, using $N=2$ and replacing $\abs{u}_{H^{3}(\Omega)}$ by $ \abs{\Tilde{u}}_{H^{3}(\Tilde{\Omega})} + h^{1/2} \norm{R(\tilde{u})}_{L^2(\Tilde{\Omega})}$.

Finally, combining \eqref{ah_estimate2_nonconvex}--\eqref{est_b11h} and the estimates for $b_{ih}$, with $i=1,2,3,4$, into \eqref{estimateExpression_nonconvex}, and noting that $h<1$, we obtain \eqref{errorNonConvex_L2_eq}
with \[\Tilde{\mathcal{C}}_0 =  C(\Omega)\left(C'_a + \Tilde{C}_{b1} + \Tilde{C}_{b2} + \Tilde{C}_{b3}+ \Tilde{C}_{b4} +  \Tilde{C}_{b8} + \Tilde{C}_{b9} + \Tilde{C}_{b10} + \Tilde{C}_{b11}\right),\] where $ \Tilde{C}_{bi}$ is the constant in the estimate for $b_{ih}$. \end{proof}

\section{Numerical Results}\label{NumResults}

Let denote by $u_{h}$ an approximation of the solution $u$ for a given mesh $\mathcal T_h$ and $\left\Vert u-u_h\right\Vert$ the norm of the error. The method is of convergence order $p$ if one has asymptotically
\[
\left\Vert u-u_h\right\Vert \leq C h^p,
\]
with $C$ a real constant independent of $h$. The errors are assessed at the node points of the elements, $T^k\in\mathcal T_h$, $k=1,\ldots,K$. We compute the  $L^2$-errors
\begin{align*}
E_2\left(\mathcal T_h\right) &= \norm{u-u_h}_{L^2(\Omega_h)} = \sqrt{\sum_{k=1}^{K} \norm{u-u_h}^2_{L^2(T^k)} }.
\end{align*}

Recall that
\begin{align*}
\norm{u_h}^2_{L^2(T^k)}\!=\!\left(u^k_h, u^k_h\right)_{L^2(T^k)}= \int_{T^k} \sum_{i=1}^{N_p} \sum_{j=1}^{N_p} u^k_i u_j^k \ell_i^k\left(\boldsymbol{x}\right) \ell_j^k\left(\!\boldsymbol{x}\!\right) \dif \boldsymbol{x}\!=\!(\boldsymbol{u}^k)^{\textrm{T}} M^k \boldsymbol{u}^k,
\end{align*} where $M_{ij}^k = \left(\ell_i^k,\ell_j^k \right)_{L^2\left(T^k\right)}$.

Consider two different meshes, denoted as $\mathcal T_{h_1}$ and $\mathcal T_{h_2}$, whose the corresponding numerical solutions are denoted as $u_{h_1}$ and $u_{h_2}$, respectively. Then, the convergence order between two successively finer meshes is determined as
\[
\mathcal O_2 \left(\mathcal T_{h_1},\mathcal T_{h_2}\right)= \frac{\log \Big(E_{2}\left(\mathcal T_{h_1}\right)/E_{2}\left(\mathcal T_{h_2}\right)\Big)}{\log\left(h_1/h_2\right)}.
\]

The meshes are generated independently by Gmsh with different mesh size parameters. In all numerical experiments, the penalisation parameter $\eta$ defined in \eqref{penaltyConst} is chosen sufficiently large, namely, we consider $C_{\eta}=200$.

We now present a benchmark for the advection-diffusion-reaction problem on several curved boundary domains, namely a disk, an annulus, and a rose-shaped domain. For additional numerical results concerning diffusion-reaction problems, we refer to \cite{DGROD}. In that work, the DG--ROD method was constructed through an iterative combination of the DG method and polynomial reconstruction, and the authors reported both $L^{\infty}$- and $L^1$-errors. They also explored different strategies to define the nodes of $\mathcal{W}_h$ on $\partial \Omega$, including an intuitive approach in which nodes are placed along the normals to the edges of $\partial \Omega_h$. In this work, we adopt the construction described earlier in Subsection~\ref{subsec:spacediscfunc} (item~\ref{item4Def_Wh}) for the boundary nodes used in our numerical experiments.

\subsection{Disk domain}\label{diskDomain_section}
Consider the advection-diffusion-reaction equation on a disk of radius $R=1$ with a homogeneous Dirichlet boundary condition. An analytical solution is manufactured for problem~\eqref{problem_E}--\eqref{problem_B} and is given as $u\left(x,y\right) = x \sin{(1-x^2-y^2)}$, from which the corresponding source term is deduced. Simulations are carried out with successively finer meshes generated by Gmsh (version~4.13.1)~\cite{gmsh} (see Figure~\ref{diskDomain}).

\begin{figure}
  \begin{subfigure}[b]{.45\textwidth} 
  \centering {\includegraphics[width=5cm]{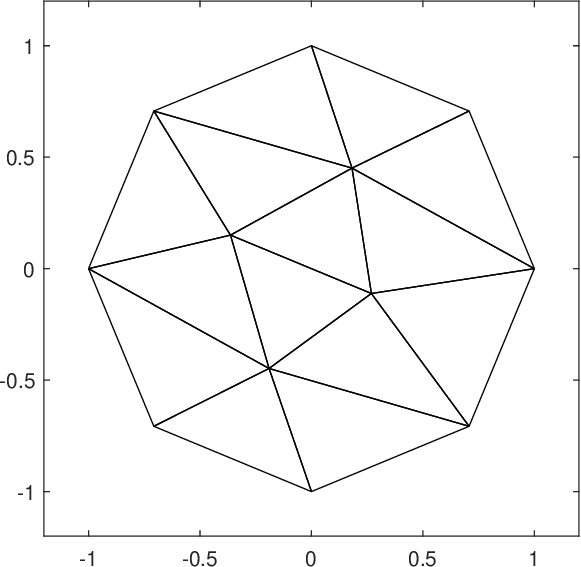}} \end{subfigure}
  \begin{subfigure}[b]{.45\textwidth}  \centering {\includegraphics[width=5cm]{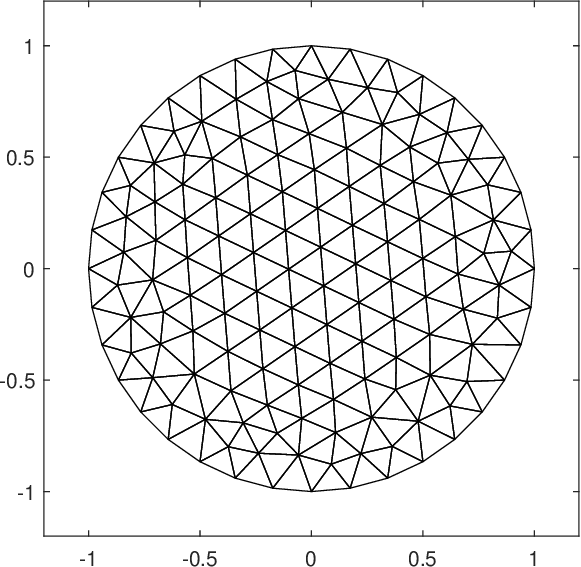}} \end{subfigure}
\caption[]{Unstructured meshes generated for the disk domain. Mesh with $K = 14$ and $h = 9.34\text{E$-$}01$ (left panel) and mesh with $K = 262$ and $h = 2.34\text{E$-$}01$ (right panel).}
\label{diskDomain}
\end{figure}

Simulations are performed using the classical DG method, in which homogeneous Dirichlet boundary conditions are prescribed at the computational boundary nodes. Each computational boundary node is mapped to a corresponding point on the physical boundary where the Dirichlet condition is prescribed. The boundary value evaluated at the physical boundary point is then imposed at the associated computational node. The results for the classical DG method are compared to the results obtained using the DG--ROD method. Tables~\ref{tabela_xsin_b1_c1}, \ref{tabela_xsin_bexp_c05exp}, \ref{tabela_xsin_b_c_polinom} and \ref{tabela_xsin_b50poly_cexp} report the $L^2-$errors and convergence orders for the classical DG method and for DG--ROD method taking $N=2,3,4$ and considering $\boldsymbol{b}(\boldsymbol{x})=(1,1)^{\textrm{T}}$ and $c(\boldsymbol{x})=1$, $\boldsymbol{b}(\boldsymbol{x})=(\exp{(x)},0)^{\textrm{T}}$ and $c(\boldsymbol{x})=\frac{1}{2}\exp{(x)}$, $\boldsymbol{b}(\boldsymbol{x})=(2-y^2,2-x)^{\textrm{T}}$ and $c(\boldsymbol{x})= 1 + (1+x)(1+y)^2$, and $\boldsymbol{b}(\boldsymbol{x})=50(-y,x)^{\textrm{T}}$ and $c(\boldsymbol{x})=\exp{(x)}$, respectively. The results for the classical DG method demonstrate the accuracy deterioration from such a geometrical mismatch without any specific treatment for curved boundaries, and the error convergence is limited to the second order. On the other hand, the quality of the approximations obtained with the DG--ROD method is in good agreement with the theoretical results.

\begin{table}[h!]
\caption{Errors and convergence orders for the classical DG method and for the DG--ROD method in the disk domain with the Dirichlet boundary conditions, 
$\boldsymbol{b}(\boldsymbol{x})=(1,1)^{\textrm{T}}$ and $c(\boldsymbol{x})=1$.}
\label{tabela_xsin_b1_c1}
\centering
\begin{adjustbox}{width=1\textwidth}
\begin{tabular}{@{}rc@{}r@{}cccc@{}r@{}cccc@{}r@{}cccc@{}}
\toprule
\multirow{3}{*}{$K$} & \multirow{3}{*}{$h$} & \phantom{aaaa} 
& \multicolumn{4}{c}{$N=2$} 
& \phantom{aaaa} 
& \multicolumn{4}{c}{$N=3$} 
& \phantom{aaaa} 
& \multicolumn{4}{c}{$N=4$} \\
\cmidrule{4-7}\cmidrule{9-12}\cmidrule{14-17}
& & 
& \multicolumn{2}{c}{DG} & \multicolumn{2}{c}{DG--ROD}
& & \multicolumn{2}{c}{DG} & \multicolumn{2}{c}{DG--ROD}
& & \multicolumn{2}{c}{DG} & \multicolumn{2}{c}{DG--ROD} \\
\cmidrule{4-5}\cmidrule{6-7}
\cmidrule{9-10}\cmidrule{11-12}
\cmidrule{14-15}\cmidrule{16-17}
& & 
& $E_2$ & $O_2$ & $E_2$ & $O_2$
& & $E_2$ & $O_2$ & $E_2$ & $O_2$
& & $E_2$ & $O_2$ & $E_2$ & $O_2$ \\
\midrule
16   & 9.28E$-$01 && 9.35E$-$02 &  --- & 1.57E$-$02 & --- && 9.14E$-$02 & --- & 8.59E$-$04 & --- && 8.96E$-$02 & --- & 7.72E$-$04 & --- \\ 
64   & 4.48E$-$01 && 2.53E$-$02 & 1.8  & 8.08E$-$04 & 4.1 && 2.49E$-$02 & 1.8 & 7.44E$-$05 & 3.4 && 2.46E$-$02 & 1.8 & 1.83E$-$05 & 5.1 \\ 
270  & 2.38E$-$01 && 5.02E$-$03 & 2.6  & 8.73E$-$05 & 3.5 && 4.95E$-$03 & 2.6 & 6.78E$-$06 & 3.8 && 4.92E$-$03 & 2.6 & 5.15E$-$07 & 5.7 \\ 
1082 & 1.20E$-$01 && 1.24E$-$03 & 2.1  & 9.52E$-$06 & 3.2 && 1.23E$-$03 & 2.0 & 3.19E$-$07 & 4.5 && 1.22E$-$03 & 2.0 & 1.10E$-$08 & 5.6 \\ 
4316 & 6.45E$-$02 && 3.06E$-$04 & 2.2  & 1.13E$-$06 & 3.4 && 3.04E$-$04 & 2.2 & 2.74E$-$08 & 3.9 && 3.04E$-$04 & 2.2 & 4.57E$-$10 & 5.1 \\
\bottomrule
\end{tabular}
\end{adjustbox}
\end{table}

\begin{table}[h!]
\caption{Errors and convergence orders for the classical DG method and for the DG--ROD method in the disk domain with the Dirichlet boundary conditions, $\boldsymbol{b}(\boldsymbol{x})=(\exp{(x)},0)^{\textrm{T}}$ and $c(\boldsymbol{x})=\frac{1}{2}\exp{(x)}$.}
\label{tabela_xsin_bexp_c05exp}
\centering
\begin{adjustbox}{width=1\textwidth}
\begin{tabular}{@{}rc@{}r@{}cccc@{}r@{}cccc@{}r@{}cccc@{}}
\toprule
\multirow{3}{*}{$K$} & \multirow{3}{*}{$h$} & \phantom{aaaa} 
& \multicolumn{4}{c}{$N=2$} 
& \phantom{aaaa} 
& \multicolumn{4}{c}{$N=3$} 
& \phantom{aaaa} 
& \multicolumn{4}{c}{$N=4$} \\
\cmidrule{4-7}\cmidrule{9-12}\cmidrule{14-17}
& & 
& \multicolumn{2}{c}{DG} & \multicolumn{2}{c}{DG--ROD}
& & \multicolumn{2}{c}{DG} & \multicolumn{2}{c}{DG--ROD}
& & \multicolumn{2}{c}{DG} & \multicolumn{2}{c}{DG--ROD} \\
\cmidrule{4-5}\cmidrule{6-7}
\cmidrule{9-10}\cmidrule{11-12}
\cmidrule{14-15}\cmidrule{16-17}
& & 
& $E_2$ & $O_2$ & $E_2$ & $O_2$
& & $E_2$ & $O_2$ & $E_2$ & $O_2$
& & $E_2$ & $O_2$ & $E_2$ & $O_2$ \\
\midrule
16   & 9.28E$-$01 && 1.08E$-$01 & --- & 2.71E$-$02 & --- && 1.01E$-$01 & --- & 8.97E$-$04 & --- && 9.90E$-$02 & --- & 8.56E$-$04 & --- \\ 
64   & 4.48E$-$01 && 2.86E$-$02 & 1.8 & 9.15E$-$04 & 4.7 && 2.79E$-$02 & 1.8 & 7.70E$-$05 & 3.4 && 2.77E$-$02 & 1.9 & 2.04E$-$05 & 5.1 \\ 
270  & 2.38E$-$01 && 5.68E$-$03 & 2.6 & 8.83E$-$05 & 3.7 && 5.58E$-$03 & 2.5 & 6.91E$-$06 & 3.8 && 5.55E$-$03 & 2.5 & 5.31E$-$07 & 5.8 \\ 
1082 & 1.20E$-$01 && 1.40E$-$03 & 2.1 & 9.44E$-$06 & 3.3 && 1.38E$-$03 & 2.0 & 3.26E$-$07 & 4.5 && 1.38E$-$03 & 2.0 & 1.11E$-$08 & 5.7 \\ 
4316 & 6.45E$-$02 && 3.46E$-$04 & 2.2 & 1.13E$-$06 & 3.4 && 3.44E$-$04 & 2.2 & 2.87E$-$08 & 3.9  && 3.44E$-$04 & 2.2 & 4.58E$-$10 & 5.1 \\
\bottomrule
\end{tabular}
\end{adjustbox}
\end{table}

\begin{table}[h!]
\caption{Errors and convergence orders for the classical DG method and for the DG--ROD method in the disk domain with the Dirichlet boundary conditions, $\boldsymbol{b}(\boldsymbol{x})=(2-y^2,2-x)^{\textrm{T}}$ and $c(\boldsymbol{x})= 1 + (1+x)(1+y)^2$.}
\label{tabela_xsin_b_c_polinom}
\centering
\begin{adjustbox}{width=1\textwidth}
\begin{tabular}{@{}rc@{}r@{}cccc@{}r@{}cccc@{}r@{}cccc@{}}
\toprule
\multirow{3}{*}{$K$} & \multirow{3}{*}{$h$} & \phantom{aaaa} 
& \multicolumn{4}{c}{$N=2$} 
& \phantom{aaaa} 
& \multicolumn{4}{c}{$N=3$} 
& \phantom{aaaa} 
& \multicolumn{4}{c}{$N=4$} \\
\cmidrule{4-7}\cmidrule{9-12}\cmidrule{14-17}
& & 
& \multicolumn{2}{c}{DG} & \multicolumn{2}{c}{DG--ROD}
& & \multicolumn{2}{c}{DG} & \multicolumn{2}{c}{DG--ROD}
& & \multicolumn{2}{c}{DG} & \multicolumn{2}{c}{DG--ROD} \\
\cmidrule{4-5}\cmidrule{6-7}
\cmidrule{9-10}\cmidrule{11-12}
\cmidrule{14-15}\cmidrule{16-17}
& & 
& $E_2$ & $O_2$ & $E_2$ & $O_2$
& & $E_2$ & $O_2$ & $E_2$ & $O_2$
& & $E_2$ & $O_2$ & $E_2$ & $O_2$ \\
\midrule
16   & 9.28E$-$01 && 1.10E$-$01 & --- & 2.08E$-$02 & --- && 1.07E$-$01 & --- & 8.99E$-$04 & --- && 1.05E$-$01 & --- & 1.07E$-$03 & --- \\ 
64   & 4.48E$-$01 && 3.01E$-$02 & 1.8 & 8.84E$-$04 & 4.3 && 2.98E$-$02 & 1.8 & 8.22E$-$05 & 3.3 && 2.96E$-$02 & 1.7 & 2.10E$-$05 & 5.4 \\ 
270  & 2.38E$-$01 && 6.04E$-$03 & 2.5 & 9.58E$-$05 & 3.5 && 5.97E$-$03 & 2.5 & 6.94E$-$06 & 3.9 && 5.94E$-$03 & 2.5 & 5.25E$-$07 & 5.8 \\ 
1082 & 1.20E$-$01 && 1.50E$-$03 & 2.0 & 9.81E$-$06 & 3.3 && 1.48E$-$03 & 2.0 & 3.39E$-$07 & 4.4 && 1.48E$-$03 & 2.0 & 1.11E$-$08 & 5.7 \\ 
4316 & 6.45E$-$02 && 3.70E$-$04 & 2.2 & 1.15E$-$06 & 3.4 && 3.69E$-$04 & 2.2 & 2.92E$-$08 & 3.9 && 3.68E$-$04 & 2.2 & 4.57E$-$10 & 5.1  \\
\bottomrule
\end{tabular}
\end{adjustbox}
\end{table}

\begin{table}[h!]
\caption{Errors and convergence orders for the classical DG method and for the DG--ROD method in the disk domain with the Dirichlet boundary conditions, $\boldsymbol{b}(\boldsymbol{x})=50(-y,x)^{\textrm{T}}$ and $c(\boldsymbol{x})=\exp{(x)}$.}
\label{tabela_xsin_b50poly_cexp}
\centering
\begin{adjustbox}{width=1\textwidth}
\begin{tabular}{@{}rc@{}r@{}cccc@{}r@{}cccc@{}r@{}cccc@{}}
\toprule
\multirow{3}{*}{$K$} & \multirow{3}{*}{$h$} & \phantom{aaaa} 
& \multicolumn{4}{c}{$N=2$} 
& \phantom{aaaa} 
& \multicolumn{4}{c}{$N=3$} 
& \phantom{aaaa} 
& \multicolumn{4}{c}{$N=4$} \\
\cmidrule{4-7}\cmidrule{9-12}\cmidrule{14-17}
& & 
& \multicolumn{2}{c}{DG} & \multicolumn{2}{c}{DG--ROD}
& & \multicolumn{2}{c}{DG} & \multicolumn{2}{c}{DG--ROD}
& & \multicolumn{2}{c}{DG} & \multicolumn{2}{c}{DG--ROD} \\
\cmidrule{4-5}\cmidrule{6-7}
\cmidrule{9-10}\cmidrule{11-12}
\cmidrule{14-15}\cmidrule{16-17}
& & 
& $E_2$ & $O_2$ & $E_2$ & $O_2$
& & $E_2$ & $O_2$ & $E_2$ & $O_2$
& & $E_2$ & $O_2$ & $E_2$ & $O_2$ \\
\midrule
16   & 9.28E$-$01 && 7.54E$-$02 & --- & 4.76E$-$02 & --- && 5.74E$-$02 & --- & 8.56E$-$03 & --- && 5.65E$-$02 & --- & 4.47E$-$04 & --- \\ 
64   & 4.48E$-$01 && 1.69E$-$02 & 2.1 & 1.76E$-$03 & 4.5 && 1.50E$-$02 & 1.8 & 1.04E$-$04 & 6.1 && 1.49E$-$02 & 1.8 & 1.50E$-$05 & 4.7 \\ 
270  & 2.38E$-$01 && 3.05E$-$03 & 2.7 & 2.01E$-$04 & 3.4 && 2.95E$-$03 & 2.6 & 8.12E$-$06 & 4.0 && 2.92E$-$03 & 2.6 & 5.21E$-$07 & 5.3 \\ 
1082 & 1.20E$-$01 && 7.40E$-$04 & 2.1 & 1.35E$-$05 & 4.0 && 7.26E$-$04 & 2.1 & 3.33E$-$07 & 4.7 && 7.24E$-$04 & 2.0 & 1.09E$-$08 & 5.7 \\ 
4316 & 6.45E$-$02 && 1.82E$-$04 & 2.2 & 1.38E$-$06 & 3.7 && 1.80E$-$04 & 2.2 & 2.59E$-$08 & 4.1  && 1.80E$-$04 & 2.2 & 4.58E$-$10 & 5.1 \\
\bottomrule
\end{tabular}
\end{adjustbox}
\end{table}

\subsection{Annulus domain}

Now, we consider an annulus domain with inner radius $R_I = 0.5$ and outer radius $R_E = 1$ meshed with triangular elements (see Figure~\ref{annulusDomain}). The analytic solution corresponds to the manufactured solution $u(x, y) = \log(x^2 + y^2)$ and the boundaries are prescribed with constant Dirichlet boundary conditions. The numerical simulations are carried out with successively finer meshes generated by Gmsh. As for the previous test case, simulations are performed for the classical DG method and for the DG--ROD method. The results are reported in Tables~\ref{tabela_annulus_b1_c1}, \ref{tabela_annulus_bexp_c05exp}, \ref{tabela_annulus_b_c_polinom} and \ref{tabela_annulus_b50poly_cexp} considering $\boldsymbol{b}(\boldsymbol{x})=(1,1)^{\textrm{T}}$ and $c(\boldsymbol{x})=1$, $\boldsymbol{b}(\boldsymbol{x})=(\exp{(x)},0)^{\textrm{T}}$ and $c(\boldsymbol{x})=\frac{1}{2}\exp{(x)}$,  $\boldsymbol{b}(\boldsymbol{x})=(2-y^2,2-x)^{\textrm{T}}$ and $c(\boldsymbol{x})= 1 + (1+x)(1+y)^2$ and $\boldsymbol{b}(\boldsymbol{x})=50(-y,x)^{\textrm{T}}$ and $c(\boldsymbol{x})=\exp{(x)}$, respectively. The optimal convergence orders are recovered due to the polynomial reconstruction of the boundary conditions.

\begin{remark}\label{remark_nonConvex}
Note that if there exists a function $\Tilde{u}$ defined in $\Tilde{\Omega}$ such that $\Tilde{u} \in H^{N+1}(\Tilde{\Omega})$, $\Tilde{u}$ coincide with $u$ in $\Omega$, $\Tilde{u}$ satisfies \eqref{problem_E} in $\Tilde{\Omega}$ and $\Tilde{u}$ vanishes on $\partial \Omega$ in the sense of trace, then, for $h$ sufficiently small, we may prove that there exists a mesh-independent constant $\mathcal{\Tilde{C}}$ such that
\begin{equation}\label{NonConvex_assumpUTilde}
    \opnorm{u - u_h }' \leq \mathcal{\Tilde{C}} h^N |\Tilde{u}|_{H^{N+1}(\Tilde{\Omega})},
\end{equation}  
where $\opnorm{\cdot}'$ denotes the norm defined in \eqref{DG_norm_nonConvex}. Thus, if the assumptions described above hold, the optimal convergence can be achieved in the $L^2-$ norm. However, given a regular $f$ in $\Tilde{\Omega}$, the existence of an associated $\Tilde{u}$ satisfying the above assumptions is not ensured. In this test case, the solution satisfies the conditions described above. Thus, the method recovers the optimal convergence orders for $N>2$. Specifically, the exact solution $u$ is extended by $\Tilde{u}$ defined by the same expression outside $\Omega$ and the extension of $f$ is defined by $f = -\Delta u + \nabla \cdot (\boldsymbol{b} u) + cu$.
\end{remark}

\begin{figure}
  \begin{subfigure}[b]{.45\textwidth} 
  \centering {\includegraphics[width=5cm]{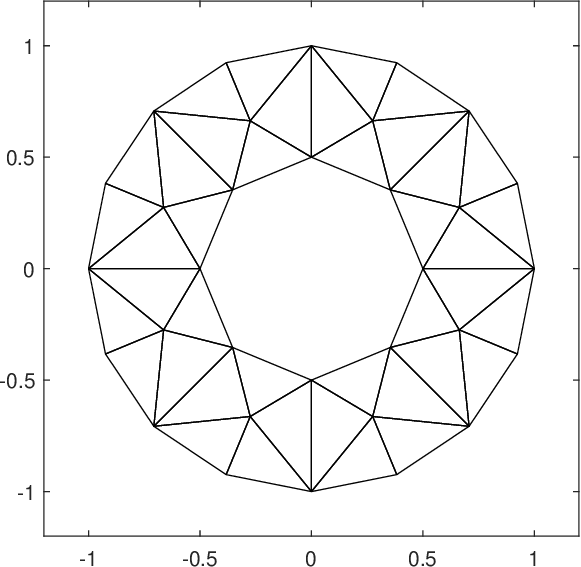}} \end{subfigure}
  \begin{subfigure}[b]{.45\textwidth}  \centering {\includegraphics[width=5cm]{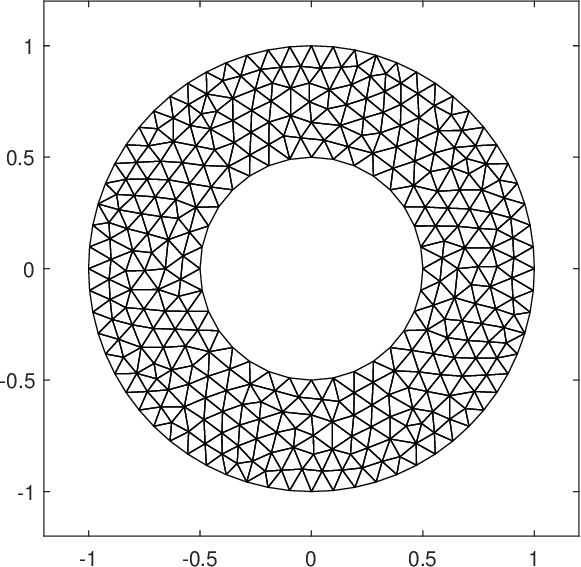}} \end{subfigure}
\caption[]{Unstructured mesh generated for the annulus domain. Mesh with $K = 40$ and $h = 5.00\text{E$-$}01$ (left panel) and mesh with $K = 608$ and $h = 1.31\text{E$-$}01$ (right panel).}
\label{annulusDomain}
\end{figure}

\begin{table}[h!]
\caption{Errors and convergence orders for the classical DG method and for the DG--ROD method in the annulus domain with the Dirichlet boundary conditions, $\boldsymbol{b}(\boldsymbol{x})=(1,1)^{\textrm{T}}$ and $c(\boldsymbol{x})= 1$.}
\label{tabela_annulus_b1_c1}
\centering
\begin{adjustbox}{width=1\textwidth}
\begin{tabular}{@{}rc@{}r@{}cccc@{}r@{}cccc@{}r@{}cccc@{}}
\toprule
\multirow{3}{*}{$K$} & \multirow{3}{*}{$h$} & \phantom{aaaa} 
& \multicolumn{4}{c}{$N=2$} 
& \phantom{aaaa} 
& \multicolumn{4}{c}{$N=3$} 
& \phantom{aaaa} 
& \multicolumn{4}{c}{$N=4$} \\
\cmidrule{4-7}\cmidrule{9-12}\cmidrule{14-17}
& & 
& \multicolumn{2}{c}{DG} & \multicolumn{2}{c}{DG--ROD}
& & \multicolumn{2}{c}{DG} & \multicolumn{2}{c}{DG--ROD}
& & \multicolumn{2}{c}{DG} & \multicolumn{2}{c}{DG--ROD} \\
\cmidrule{4-5}\cmidrule{6-7}
\cmidrule{9-10}\cmidrule{11-12}
\cmidrule{14-15}\cmidrule{16-17}
& & 
& $E_2$ & $O_2$ & $E_2$ & $O_2$
& & $E_2$ & $O_2$ & $E_2$ & $O_2$
& & $E_2$ & $O_2$ & $E_2$ & $O_2$ \\
\midrule
40    & 5.00E$-$01 && 8.90E$-$02 & --- & 4.50E$-$03 & --- && 9.02E$-$02 & --- & 4.45E$-$04 & --- && 9.09E$-$02 & --- & 8.17E$-$05 & --- \\ 
140   & 2.74E$-$01 && 2.41E$-$02 & 2.2 & 5.20E$-$04 & 3.6 && 2.43E$-$02 & 2.2 & 4.86E$-$05 & 3.7 && 2.44E$-$02 & 2.2 & 4.10E$-$06 & 5.0 \\ 
640   & 1.34E$-$01 && 5.72E$-$03 & 2.0 & 4.27E$-$05 & 3.5 && 5.75E$-$03 & 2.0 & 1.54E$-$06 & 4.8 && 5.76E$-$03 & 2.0 & 5.15E$-$08 & 6.1 \\ 
2536  & 6.80E$-$02 && 1.44E$-$03 & 2.0 & 4.11E$-$06 & 3.5 && 1.45E$-$03 & 2.0 & 7.08E$-$08 & 4.5 && 1.45E$-$03 & 2.0 & 1.52E$-$09 & 5.2 \\ 
10208 & 3.66E$-$02 && 3.62E$-$04 & 2.2 & 4.60E$-$07 & 3.5 && 3.63E$-$04 & 2.2 & 4.09E$-$09 & 4.6 && 3.63E$-$04 & 2.2 & 4.70E$-$11 & 5.6 \\
\bottomrule
\end{tabular}
\end{adjustbox}
\end{table}

\begin{table}[h!]
\caption{Errors and convergence orders for the classical DG method and for the DG--ROD method in the annulus domain with the Dirichlet boundary conditions, $\boldsymbol{b}(\boldsymbol{x})=(\exp{(x)},0)^{\textrm{T}}$ and $c(\boldsymbol{x})= \frac{1}{2}\exp{(x)}$.}
\label{tabela_annulus_bexp_c05exp}
\centering
\begin{adjustbox}{width=1\textwidth}
\begin{tabular}{@{}rc@{}r@{}cccc@{}r@{}cccc@{}r@{}cccc@{}}
\toprule
\multirow{3}{*}{$K$} & \multirow{3}{*}{$h$} & \phantom{aaaa} 
& \multicolumn{4}{c}{$N=2$} 
& \phantom{aaaa} 
& \multicolumn{4}{c}{$N=3$} 
& \phantom{aaaa} 
& \multicolumn{4}{c}{$N=4$} \\
\cmidrule{4-7}\cmidrule{9-12}\cmidrule{14-17}
& & 
& \multicolumn{2}{c}{DG} & \multicolumn{2}{c}{DG--ROD}
& & \multicolumn{2}{c}{DG} & \multicolumn{2}{c}{DG--ROD}
& & \multicolumn{2}{c}{DG} & \multicolumn{2}{c}{DG--ROD} \\
\cmidrule{4-5}\cmidrule{6-7}
\cmidrule{9-10}\cmidrule{11-12}
\cmidrule{14-15}\cmidrule{16-17}
& & 
& $E_2$ & $O_2$ & $E_2$ & $O_2$
& & $E_2$ & $O_2$ & $E_2$ & $O_2$
& & $E_2$ & $O_2$ & $E_2$ & $O_2$ \\
\midrule
40    & 5.00E$-$01 && 8.87E$-$02 & --- & 4.50E$-$03 & --- && 8.98E$-$02 & --- & 4.40E$-$04 & --- && 9.04E$-$02 & --- & 8.16E$-$05 & --- \\ 
140   & 2.74E$-$01 && 2.41E$-$02 & 2.2 & 5.14E$-$04 & 3.6 && 2.42E$-$02 & 2.2 & 3.85E$-$05 & 3.8 && 2.43E$-$02 & 2.2 & 4.07E$-$06 & 5.0 \\ 
640   & 1.34E$-$01 && 5.68E$-$03 & 2.0 & 4.25E$-$05 & 3.5 && 5.71E$-$03 & 2.0 & 1.54E$-$06 & 4.8 && 5.72E$-$03 & 2.0 & 5.14E$-$08 & 6.1 \\ 
2536  & 6.80E$-$02 && 1.43E$-$03 & 2.0 & 4.09E$-$06 & 3.5 && 1.44E$-$03 & 2.0 & 7.04E$-$08 & 4.6 && 1.44E$-$03 & 2.0 & 1.15E$-$09 & 5.2 \\ 
10208 & 3.66E$-$02 && 3.60E$-$04 & 2.0 & 4.59E$-$07 & 3.5 && 3.61E$-$04 & 2.2 & 4.07E$-$09 & 4.6 && 3.61E$-$04 & 2.2 & 4.72E$-$11 & 5.6 \\
\bottomrule
\end{tabular}
\end{adjustbox}
\end{table}

\begin{table}[h!]
\caption{Errors and convergence orders for the classical DG method and for the DG--ROD method in the annulus domain with the Dirichlet boundary conditions, $\boldsymbol{b}(\boldsymbol{x})=(2-y^2,2-x)^{\textrm{T}}$ and $c(\boldsymbol{x})= 1 + (1+x)(1+y)^2$.}
\label{tabela_annulus_b_c_polinom}
\centering
\begin{adjustbox}{width=1\textwidth}
\begin{tabular}{@{}rc@{}r@{}cccc@{}r@{}cccc@{}r@{}cccc@{}}
\toprule
\multirow{3}{*}{$K$} & \multirow{3}{*}{$h$} & \phantom{aaaa} 
& \multicolumn{4}{c}{$N=2$} 
& \phantom{aaaa} 
& \multicolumn{4}{c}{$N=3$} 
& \phantom{aaaa} 
& \multicolumn{4}{c}{$N=4$} \\
\cmidrule{4-7}\cmidrule{9-12}\cmidrule{14-17}
& & 
& \multicolumn{2}{c}{DG} & \multicolumn{2}{c}{DG--ROD}
& & \multicolumn{2}{c}{DG} & \multicolumn{2}{c}{DG--ROD}
& & \multicolumn{2}{c}{DG} & \multicolumn{2}{c}{DG--ROD} \\
\cmidrule{4-5}\cmidrule{6-7}
\cmidrule{9-10}\cmidrule{11-12}
\cmidrule{14-15}\cmidrule{16-17}
& & 
& $E_2$ & $O_2$ & $E_2$ & $O_2$
& & $E_2$ & $O_2$ & $E_2$ & $O_2$
& & $E_2$ & $O_2$ & $E_2$ & $O_2$ \\
\midrule
40    & 5.00E$-$01 && 9.10E$-$02 & --- & 4.66E$-$03 & --- && 9.24E$-$02 & --- & 4.48E$-$04 & --- && 9.31E$-$02 & --- & 8.19E$-$05 & --- \\ 
140   & 2.74E$-$01 && 2.47E$-$02 & 2.2 & 5.31E$-$04 & 3.6 && 2.49E$-$02 & 2.2 & 4.87E$-$05 & 3.7 && 2.50E$-$02 & 2.2 & 4.12E$-$06 & 5.0 \\ 
640   & 1.34E$-$01 && 5.85E$-$03 & 2.0 & 4.33E$-$05 & 3.5 && 5.89E$-$03 & 2.0 & 1.55E$-$06 & 4.8 && 5.90E$-$03 & 2.0 & 5.17E$-$08 & 6.1 \\ 
2536  & 6.80E$-$02 && 1.48E$-$03 & 2.1 & 4.12E$-$06 & 3.5 && 1.48E$-$03 & 2.0 & 7.14E$-$08 & 4.5 && 1.48E$-$03 & 2.0 & 1.52E$-$09 & 5.2 \\ 
10208 & 3.66E$-$02 && 3.71E$-$04 & 2.2 & 4.61E$-$07 & 3.5 && 3.72E$-$04 & 2.2 & 4.10E$-$09 & 4.6 && 3.72E$-$04 & 2.2 & 4.76E$-$11 & 5.6  \\
\bottomrule
\end{tabular}
\end{adjustbox}
\end{table}

\begin{table}[h!]
\caption{Errors and convergence orders for the classical DG method and for the DG--ROD method in the annulus domain with the Dirichlet boundary conditions, $\boldsymbol{b}(\boldsymbol{x})=50(-y,x)^{\textrm{T}}$ and $c(\boldsymbol{x})= \exp{(x)}$.}
\label{tabela_annulus_b50poly_cexp}
\centering
\begin{adjustbox}{width=1\textwidth}
\begin{tabular}{@{}rc@{}r@{}cccc@{}r@{}cccc@{}r@{}cccc@{}}
\toprule
\multirow{3}{*}{$K$} & \multirow{3}{*}{$h$} & \phantom{aaaa} 
& \multicolumn{4}{c}{$N=2$} 
& \phantom{aaaa} 
& \multicolumn{4}{c}{$N=3$} 
& \phantom{aaaa} 
& \multicolumn{4}{c}{$N=4$} \\
\cmidrule{4-7}\cmidrule{9-12}\cmidrule{14-17}
& & 
& \multicolumn{2}{c}{DG} & \multicolumn{2}{c}{DG--ROD}
& & \multicolumn{2}{c}{DG} & \multicolumn{2}{c}{DG--ROD}
& & \multicolumn{2}{c}{DG} & \multicolumn{2}{c}{DG--ROD} \\
\cmidrule{4-5}\cmidrule{6-7}
\cmidrule{9-10}\cmidrule{11-12}
\cmidrule{14-15}\cmidrule{16-17}
& & 
& $E_2$ & $O_2$ & $E_2$ & $O_2$
& & $E_2$ & $O_2$ & $E_2$ & $O_2$
& & $E_2$ & $O_2$ & $E_2$ & $O_2$ \\
\midrule
40    & 5.00E$-$01 && 8.54E$-$02 & --- & 5.30E$-$03 & --- && 8.87E$-$02 & --- & 4.96E$-$04 & --- && 8.95E$-$02 & --- & 8.26E$-$05 & --- \\ 
140   & 2.74E$-$01 && 2.43E$-$02 & 2.1 & 7.80E$-$04 & 3.2 && 2.43E$-$02 & 2.1 & 5.69E$-$05 & 3.6 && 2.44E$-$02 & 2.2 & 5.01E$-$06 & 4.6 \\ 
640   & 1.34E$-$01 && 5.73E$-$03 & 2.0 & 6.15E$-$05 & 3.6 && 5.76E$-$03 & 2.0 & 1.70E$-$06 & 4.9 && 5.77E$-$03 & 2.0 & 5.31E$-$08 & 6.4 \\ 
2536  & 6.80E$-$02 && 1.45E$-$03 & 2.0 & 4.72E$-$06 & 3.8 && 1.45E$-$03 & 2.0 & 7.25E$-$08 & 4.7 && 1.45E$-$03 & 2.0 & 1.53E$-$09 & 5.2 \\ 
10208 & 3.66E$-$02 && 3.63E$-$04 & 2.2 & 4.80E$-$07 & 3.7 && 3.64E$-$04 & 2.2 & 4.26E$-$09 & 4.6 && 3.64E$-$04 & 2.2 & 4.70E$-$11 & 5.6 \\
\bottomrule
\end{tabular}
\end{adjustbox}
\end{table}

\subsection{Rose–shaped domain}
Consider a geometry obtained by applying a diffeomorphic transformation to an annular domain $\Omega \textsc{\char13} $, whose physical interior and exterior boundaries have radius $r_{I}$ and $r_{E} $, respectively. This diffeomorphic transformation defines a mapping from $\Omega\textsc{\char13}$ to $\Omega$, where $\Omega$ denotes the resulting rose-shaped domain, described in polar coordinates
\begin{equation}\label{difTransf}
    \Omega \textsc{\char13} \to \Omega: \begin{bmatrix}
        r\textsc{\char13} \\ \theta\textsc{\char13}
    \end{bmatrix} \to \begin{bmatrix}
        r \\ \theta
    \end{bmatrix} = \begin{bmatrix}
        R(r\textsc{\char13}, \theta\textsc{\char13}; \alpha, \beta) \\ \theta
    \end{bmatrix},
\end{equation} where $\alpha$ is the number of petals and function $R(r\textsc{\char13}, \theta\textsc{\char13}):= R(r\textsc{\char13}, \theta\textsc{\char13}; \alpha, \beta)$ corresponds to a periodic radius perturbation of magnitude in $[-\beta, \beta]$, with $\beta \in \mathbb{R}$, given as
\begin{equation}\label{Raio}
     R(r\textsc{\char13}, \theta\textsc{\char13}; \alpha, \beta) = r\textsc{\char13}\left(1 - \beta + \beta  \cos(\alpha \theta\textsc{\char13}) \right).
\end{equation}

Thus, the parametrisations of the interior and exterior physical boundaries are defined as  $R_{I}: = R(r_{I}, \theta)$ and $R_{E}: = R(r_{E}, \theta)$, respectively. The analytical solution is chosen as the manufactured solution $u\left(x,y\right)=\log\left(x^2+y^2\right)$. In this test case, non-constant Dirichlet boundary conditions are imposed on both the interior and exterior boundaries.

We set $r_{I} = 0.5$, $r_{E} = 1$, the number of petals is $\alpha = 8$, and the perturbation magnitude is $\beta = 0.1$. The resulting rose-shaped domain is discretised using triangular elements (see Figure~\ref{roseDomain}). The advection-diffusion-reaction equation is solved, and the approximate solution is compared with the exact solution. The numerical simulations are carried out with successively finer meshes generated by Gmsh. Tables~\ref{tabela_rose_b1_c1}, \ref{tabela_rose_bexp_c05exp}, \ref{tabela_rose_b_c_polinom} and \ref{tabela_rose_b50poly_cexp} report the errors and convergence rates for the classical DG method and for the DG--ROD method considering $\boldsymbol{b}(\boldsymbol{x})=(1,1)^{\textrm{T}}$ and $c(\boldsymbol{x})=1$, $\boldsymbol{b}(\boldsymbol{x})=(\exp{(x)},0)^{\textrm{T}}$ and $c(\boldsymbol{x})=\frac{1}{2}\exp{(x)}$, $\boldsymbol{b}(\boldsymbol{x})=(2-y^2,2-x)^{\textrm{T}}$ and $c(\boldsymbol{x})= 1 + (1+x)(1+y)^2$ and $\boldsymbol{b}(\boldsymbol{x})=50(-y,x)^{\textrm{T}}$ and $c(\boldsymbol{x})=\exp{(x)}$, respectively. The results for the classical DG method confirm the accuracy deterioration due to the lack of specific treatment for curved boundaries, where the error convergence is limited to the second order. On the other hand, the results for the DG--ROD method show that the optimal convergence orders are recovered. The method behaves similarly to the previous test case, where the extensions of $u$ and $f$ are defined as described in Remark \ref{remark_nonConvex}, and the optimal convergence orders can be achieved for $N>2$.

\begin{figure}
  \begin{subfigure}[b]{.45\textwidth} 
  \centering {\includegraphics[width=5cm]{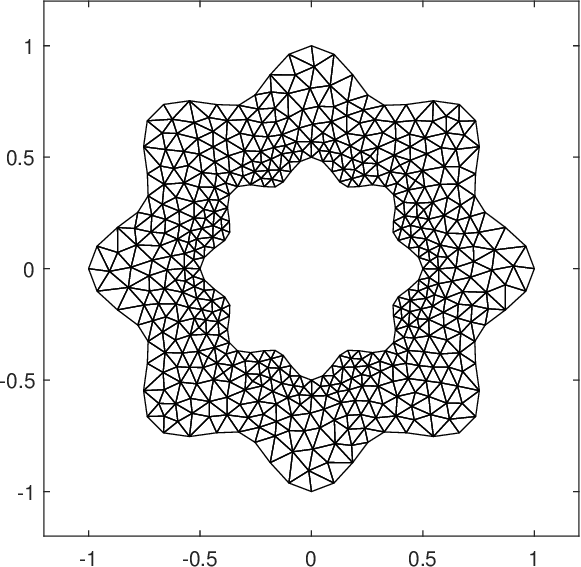}} \end{subfigure}
  \begin{subfigure}[b]{.45\textwidth}  \centering {\includegraphics[width=5cm]{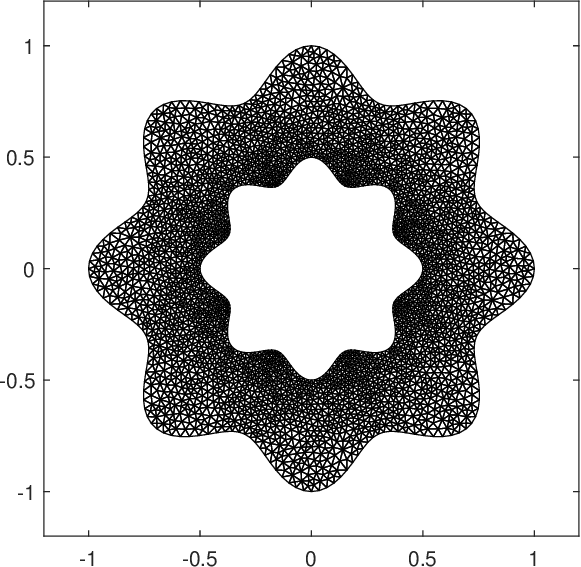}}\end{subfigure}
\caption[]{Unstructured mesh generated for the rose-shaped domain. Mesh with $K = 888$ and $h = 1.41\text{E$-$}01$ (left panel) and mesh with $K = 6912$ and $h = 5.25\text{E$-$}02$ (right panel).}
\label{roseDomain}
\end{figure}

\begin{table}[h!]
\caption{Errors and convergence orders for the classical DG method and for the DG--ROD method in the rose-shaped domain with the Dirichlet boundary conditions, $\boldsymbol{b}(\boldsymbol{x})=(1,1)^{\textrm{T}}$ and $c(\boldsymbol{x})= 1$.}
\label{tabela_rose_b1_c1}
\centering
\begin{adjustbox}{width=1\textwidth}
\begin{tabular}{@{}rc@{}r@{}cccc@{}r@{}cccc@{}r@{}cccc@{}}
\toprule
\multirow{3}{*}{$K$} & \multirow{3}{*}{$h$} & \phantom{aaaa} 
& \multicolumn{4}{c}{$N=2$} 
& \phantom{aaaa} 
& \multicolumn{4}{c}{$N=3$} 
& \phantom{aaaa} 
& \multicolumn{4}{c}{$N=4$} \\
\cmidrule{4-7}\cmidrule{9-12}\cmidrule{14-17}
& & 
& \multicolumn{2}{c}{DG} & \multicolumn{2}{c}{DG--ROD}
& & \multicolumn{2}{c}{DG} & \multicolumn{2}{c}{DG--ROD}
& & \multicolumn{2}{c}{DG} & \multicolumn{2}{c}{DG--ROD} \\
\cmidrule{4-5}\cmidrule{6-7}
\cmidrule{9-10}\cmidrule{11-12}
\cmidrule{14-15}\cmidrule{16-17}
& & 
& $E_2$ & $O_2$ & $E_2$ & $O_2$
& & $E_2$ & $O_2$ & $E_2$ & $O_2$
& & $E_2$ & $O_2$ & $E_2$ & $O_2$ \\
\midrule
240   & 2.52E$-$01 && 2.95E$-$02 & --- & 2.02E$-$04 & --- && 2.73E$-$02 & --- & 1.42E$-$05 & --- && 2.73E$-$02 & --- & 1.10E$-$06 & --- \\ 
888   & 1.41E$-$01 && 7.44E$-$03 & 2.4 & 1.41E$-$05 & 4.6 && 7.20E$-$03 & 2.3 & 4.52E$-$07 & 5.9 && 7.18E$-$03 & 2.3 & 1.68E$-$08 & 7.2 \\ 
3732  & 6.87E$-$02 && 1.53E$-$03 & 2.2 & 1.51E$-$06 & 3.1 && 1.50E$-$03 & 2.2 & 1.90E$-$08 & 4.4 && 1.50E$-$03 & 2.2 & 2.76E$-$10 & 5.7 \\ 
15130 & 3.54E$-$02 && 3.62E$-$04 & 2.2 & 1.80E$-$07 & 3.2 && 3.59E$-$04 & 2.2 & 1.08E$-$09 & 4.3 && 3.59E$-$04 & 2.2 & 9.18E$-$12 & 5.1 \\ 
60472 & 1.83E$-$02 && 8.99E$-$05 & 2.1 & 2.16E$-$08 & 3.2 && 8.96E$-$05 & 2.1 & 6.49E$-$11 & 4.2 && 8.95E$-$05 & 2.1 & --- & ---\\
\bottomrule
\end{tabular}
\end{adjustbox}
\end{table}

\begin{table}[h!]
\caption{Errors and convergence orders for the classical DG method in the rose-shaped domain with the Dirichlet boundary conditions, $\boldsymbol{b}(\boldsymbol{x})=(\exp{(x)},0)^{\textrm{T}}$ and $c(\boldsymbol{x})= \frac{1}{2}\exp{(x)}$.}
\label{tabela_rose_bexp_c05exp}
\centering
\begin{adjustbox}{width=1\textwidth}
\begin{tabular}{@{}rc@{}r@{}cccc@{}r@{}cccc@{}r@{}cccc@{}}
\toprule
\multirow{3}{*}{$K$} & \multirow{3}{*}{$h$} & \phantom{aaaa} 
& \multicolumn{4}{c}{$N=2$} 
& \phantom{aaaa} 
& \multicolumn{4}{c}{$N=3$} 
& \phantom{aaaa} 
& \multicolumn{4}{c}{$N=4$} \\
\cmidrule{4-7}\cmidrule{9-12}\cmidrule{14-17}
& & 
& \multicolumn{2}{c}{DG} & \multicolumn{2}{c}{DG--ROD}
& & \multicolumn{2}{c}{DG} & \multicolumn{2}{c}{DG--ROD}
& & \multicolumn{2}{c}{DG} & \multicolumn{2}{c}{DG--ROD} \\
\cmidrule{4-5}\cmidrule{6-7}
\cmidrule{9-10}\cmidrule{11-12}
\cmidrule{14-15}\cmidrule{16-17}
& & 
& $E_2$ & $O_2$ & $E_2$ & $O_2$
& & $E_2$ & $O_2$ & $E_2$ & $O_2$
& & $E_2$ & $O_2$ & $E_2$ & $O_2$ \\
\midrule
240   & 2.52E$-$01 && 2.97E$-$02 & --- & 2.04E$-$04 & --- && 2.75E$-$02 & --- & 1.42E$-$05 & --- && 2.75E$-$02 & --- & 1.11E$-$06 & --- \\ 
888   & 1.41E$-$01 && 7.50E$-$03 & 2.4 & 1.42E$-$05 & 4.6 && 7.25E$-$03 & 2.3 & 4.54E$-$07 & 5.9 && 7.22E$-$03 & 2.3 & 1.70E$-$08 & 7.2 \\ 
3732  & 6.87E$-$02 && 1.54E$-$03 & 2.2 & 1.51E$-$06 & 3.1 && 1.51E$-$03 & 2.2 & 1.90E$-$08 & 4.4 && 1.51E$-$03 & 2.2 & 2.76E$-$10 & 5.7 \\ 
15130 & 3.54E$-$02 && 3.64E$-$04 & 2.2 & 1.80E$-$07 & 3.2 && 3.61E$-$04 & 2.2 & 1.08E$-$09 & 4.3 && 3.61E$-$04 & 2.2 & 9.97E$-$12 & 5.0  \\ 
60472 & 1.83E$-$02 && 9.04E$-$05 & 2.1 & 2.16E$-$08 & 3.2 && 9.01E$-$05 & 2.1 & 6.49E$-$11 & 4.2 && 9.00E$-$05 & 2.1 & --- & --- \\
\bottomrule
\end{tabular}
\end{adjustbox}
\end{table}

\begin{table}[h!]
\caption{Errors and convergence orders for the classical DG method and for the DG--ROD method in the rose-shaped domain with the Dirichlet boundary conditions, $\boldsymbol{b}(\boldsymbol{x})=(2-y^2,2-x)^{\textrm{T}}$ and $c(\boldsymbol{x})= 1 + (1+x)(1+y)^2$.}
\label{tabela_rose_b_c_polinom}
\centering
\begin{adjustbox}{width=1\textwidth}
\begin{tabular}{@{}rc@{}r@{}cccc@{}r@{}cccc@{}r@{}cccc@{}}
\toprule
\multirow{3}{*}{$K$} & \multirow{3}{*}{$h$} & \phantom{aaaa} 
& \multicolumn{4}{c}{$N=2$} 
& \phantom{aaaa} 
& \multicolumn{4}{c}{$N=3$} 
& \phantom{aaaa} 
& \multicolumn{4}{c}{$N=4$} \\
\cmidrule{4-7}\cmidrule{9-12}\cmidrule{14-17}
& & 
& \multicolumn{2}{c}{DG} & \multicolumn{2}{c}{DG--ROD}
& & \multicolumn{2}{c}{DG} & \multicolumn{2}{c}{DG--ROD}
& & \multicolumn{2}{c}{DG} & \multicolumn{2}{c}{DG--ROD} \\
\cmidrule{4-5}\cmidrule{6-7}
\cmidrule{9-10}\cmidrule{11-12}
\cmidrule{14-15}\cmidrule{16-17}
& & 
& $E_2$ & $O_2$ & $E_2$ & $O_2$
& & $E_2$ & $O_2$ & $E_2$ & $O_2$
& & $E_2$ & $O_2$ & $E_2$ & $O_2$ \\
\midrule
240   & 2.52E$-$01 && 2.97E$-$02 & --- & 2.04E$-$04 & --- && 2.75E$-$02 & --- & 1.44E$-$05 & --- && 2.75E$-$02 & --- & 1.11E$-$06 & --- \\ 
888   & 1.41E$-$01 && 7.49E$-$03 & 2.4 & 1.42E$-$05 & 4.6 && 7.25E$-$03 & 2.3 & 4.53E$-$07 & 5.9 && 7.23E$-$03 & 2.3 & 1.69E$-$08 & 7.2 \\ 
3732  & 6.87E$-$02 && 1.54E$-$03 & 2.2 & 1.51E$-$06 & 3.1 && 1.52E$-$03 & 2.2 & 1.90E$-$08 & 4.4 && 1.51E$-$03 & 2.2 & 2.76E$-$10 & 5.7 \\ 
15130 & 3.54E$-$02 && 3.64E$-$04 & 2.2 & 1.80E$-$07 & 3.2 && 3.62E$-$04 & 2.2 & 1.08E$-$09 & 4.3 && 3.61E$-$04 & 2.2 & 9.39E$-$12 & 5.1 \\ 
60472 & 1.83E$-$02 && 9.06E$-$05 & 2.1 & 2.16E$-$08 & 3.2 && 9.03E$-$05 & 2.1 & 6.50E$-$11 & 4.2 && 9.02E$-$05 & 2.1 & --- & --- \\
\bottomrule
\end{tabular}
\end{adjustbox}
\end{table}

\begin{table}[h!]
\caption{Errors and convergence orders for the classical DG method and for the DG--ROD method in the rose-shaped domain with the Dirichlet boundary conditions, $\boldsymbol{b}(\boldsymbol{x})=50(-y,x)^{\textrm{T}}$ and $c(\boldsymbol{x})= \exp{(x)}$.}
\label{tabela_rose_b50poly_cexp}
\centering
\begin{adjustbox}{width=1\textwidth}
\begin{tabular}{@{}rc@{}r@{}cccc@{}r@{}cccc@{}r@{}cccc@{}}
\toprule
\multirow{3}{*}{$K$} & \multirow{3}{*}{$h$} & \phantom{aaaa} 
& \multicolumn{4}{c}{$N=2$} 
& \phantom{aaaa} 
& \multicolumn{4}{c}{$N=3$} 
& \phantom{aaaa} 
& \multicolumn{4}{c}{$N=4$} \\
\cmidrule{4-7}\cmidrule{9-12}\cmidrule{14-17}
& & 
& \multicolumn{2}{c}{DG} & \multicolumn{2}{c}{DG--ROD}
& & \multicolumn{2}{c}{DG} & \multicolumn{2}{c}{DG--ROD}
& & \multicolumn{2}{c}{DG} & \multicolumn{2}{c}{DG--ROD} \\
\cmidrule{4-5}\cmidrule{6-7}
\cmidrule{9-10}\cmidrule{11-12}
\cmidrule{14-15}\cmidrule{16-17}
& & 
& $E_2$ & $O_2$ & $E_2$ & $O_2$
& & $E_2$ & $O_2$ & $E_2$ & $O_2$
& & $E_2$ & $O_2$ & $E_2$ & $O_2$ \\
\midrule
240   & 2.52E$-$01 && 2.97E$-$02 & --- & 2.03E$-$04 & --- && 2.75E$-$02 & --- & 1.42E$-$05 & --- && 2.75E$-$02 & --- & 1.11E$-$06 & --- \\ 
888   & 1.41E$-$01 && 7.50E$-$03 & 2.4 & 1.42E$-$05 & 4.6 && 7.25E$-$03 & 2.3 & 4.52E$-$07 & 5.9 && 7.22E$-$03 & 2.3 & 1.70E$-$08 & 7.2 \\ 
3732  & 6.87E$-$02 && 1.54E$-$03 & 2.2 & 1.51E$-$06 & 3.1 && 1.51E$-$03 & 2.2 & 1.90E$-$08 & 4.4 && 1.51E$-$03 & 2.2 & 2.76E$-$10 & 5.7 \\ 
15130 & 3.54E$-$02 && 3.64E$-$04 & 2.2 & 1.80E$-$07 & 3.2 && 3.61E$-$04 & 2.2 & 1.08E$-$09 & 4.3 && 3.61E$-$04 & 2.2 & 9.97E$-$12 & 5.0 \\ 
60472 & 1.83E$-$02 && 9.04E$-$05 & 2.1 & 2.16E$-$08 & 3.2 && 9.01E$-$05 & 2.1 & 6.49E$-$11 & 4.2 && 9.00E$-$05 & 2.1 & --- & ---\\
\bottomrule
\end{tabular}
\end{adjustbox}
\end{table}

\section{Conclusions}\label{conclusions}
In this work, we discussed a discontinuous Galerkin scheme on a polygonal computational domain $\Omega_h$ for boundary value problems posed on two-dimensional domains $\Omega$ with curved boundaries. The DG--ROD method compensates for the geometric inconsistency resulting from approximating the curved boundary $\partial \Omega$ by a piecewise linear $\partial \Omega_h$ via a polynomial reconstruction of the boundary data, incorporated directly into the variational formulation. In practice, DG--ROD can be interpreted as an iterative combination of the standard DG method with boundary data reconstruction, enabling the use of simple polygonal meshes while restoring optimal convergence rates.

We presented a study on the existence and uniqueness of the solution for the advection-diffusion-reaction equation with homogeneous Dirichlet boundary conditions. Following and extending the analysis in \cite{ruas2019OptimalLA}, we provided a complete mathematical study of convergence in the natural DG norm, as well as $L^2$-error estimates, for both convex and non-convex domains. For convex domains, we proved that the DG--ROD solution achieves an optimal $\mathcal{O}(h^{N+1})$ convergence rate in the $L^2$-norm when $N$-degree piecewise polynomials are used, under suitable regularity assumptions on the solution. For non-convex domains, optimality is not attained for $N > 2$ unless the solution satisfies stronger regularity conditions and the computational domain $\Omega_h$ approximates the physical domain $\Omega$ with higher accuracy, i.e., when $\Omega \setminus \Omega_h$ is of order $h^q$, with $q > 2$. In other words, the error is affected by the geometric mismatch of order $\mathcal{O}(h^2)$ between the curved physical boundary and its piecewise linear representation. Nevertheless, the numerical experiments reported in Tables~\ref{tabela_annulus_b1_c1}--\ref{tabela_rose_b50poly_cexp} show that the DG--ROD method recovers optimal convergence rates even for $N=3$ and $N=4$. This behaviour is explained by the fact that our benchmark problems employ smooth manufactured solutions, which allow for a regular extension $\Tilde{u}$ that satisfies the governing equation in the mismatch region. In such cases, the geometric residual decays at an optimal rate, effectively bypassing the theoretical worst-case limitation. These results suggest that while the estimates in Theorems~\ref{nonConvex_errorDG_comL2}–\ref{errorNonConvex_L2} provide a rigorous upper bound for general problems where such ideal extensions are not guaranteed, the method remains highly accurate for practical applications with smooth solutions.

The effectiveness of the proposed framework is validated by a series of numerical experiments on both convex and non-convex domains. These benchmarks confirm that the DG--ROD method successfully restores optimal convergence, matching the theoretical predictions for convex cases and demonstrating high robustness in non-convex geometries. It is also important to highlight that while the formal mesh size assumptions are a sufficient condition for the theoretical analysis, the numerical results indicate that accurate approximations are attainable even on relatively coarse meshes. For instance, as shown in Section~\ref{diskDomain_section}, the polynomial reconstruction successfully corrects the geometric error even when the mesh size $h$ is of the same order of magnitude as the radius of the domain. We also note that, in \cite{DGROD}, the authors considered a diffusion--reaction problem and constructed the DG--ROD method through an iterative combination of the DG method and polynomial reconstruction, exploring different strategies for placing the nodes of $\mathcal{W}_h$ on $\partial \Omega$; in their numerical results, they quantified accuracy via the $L^\infty$- and $L^1$-errors. 

Extensions of this work to nonlinear equations and time-dependent problems remain challenging and will be pursued in future research. We also plan to extend the DG--ROD approach to other types of boundary conditions (Neumann, Robin) and derive corresponding error estimates.  

Finally, it is noteworthy that the DG--ROD method can be extended to three-dimensional problems. Indeed, the three-dimensional setting has been investigated for classical Lagrange finite elements in \cite{10.1093/imanum/draa029}, where a closely related technique was analysed. Following the same principles as in the two-dimensional case, both the numerical strategy and the associated error analysis can be adapted to the DG framework. \\

\noindent {\small \textbf{Funding}
The authors acknowledge financial support by the Centre for Mathematics of the University of Coimbra (CMUC, https://doi.org/10.54499/UID/00324/2025) under the Portuguese Foundation for Science and Technology (FCT), Grants UID/00324/2025 and UID/PRR/00324/2025. M. Santos acknowledges FCT for the support under the Ph.D. scholarship UI/BD/153816/2022 (https://doi.org/10.54499/UI/BD/153816/2022). \\

%

\noindent  {\small \textbf{Data Availability} The paper has no associated data.}

\section*{Declarations}
\noindent {\small \textbf{Conflict of interest} The authors declare that they have no conflict of interest.}

\printbibliography 

\end{document}